\newtheorem{lem}{Lemma}[section]
\newtheorem{cor}{Corollary}[section]
\newtheorem{pro}{Proposition}[section]
\newtheorem{teo}{Theorem}[section]
\theoremstyle{definition}
\newtheorem{defi}{Definition}[section]
\title{Geodesic flows of compact higher genus surfaces without conjugate points have expansive factors}
\author{Edhin Franklin Mamani\thanks{edy.f.cas@mat.puc-rio.br}}
\affil{Departamento de Matemática, Pontifícia Universidade Católica do Rio de Janeiro, Rua Marquês de São Vicente, 225, Rio de Janeiro 22451-900, Brazil}
\date{}
\begin{document}

%\linenumbers
\maketitle

\begin{abstract}
In this paper we show that a geodesic flow of a compact surface without conjugate points of genus greater than one is time-preserving semi-conjugate to a continuous expansive flow which is topologically mixing and has a local product structure. As an application we show that the geodesic flow of a compact surface without conjugate points of genus greater than one has a unique measure of maximal entropy. This gives an alternative proof of Climenhaga-Knieper-War Theorem. 
\end{abstract}

\section{Introduction}
The fundamental work of Morse \cite{morse24} is the main motivation for the study of existence of equivalences between geodesic flows of compact higher genus surfaces and hyperbolic geodesic flows. In 1980, Gromov \cite{grom87} built a semi-conjugacy, not necessarily time-preserving, between the geodesic flow of a higher genus surface of non-positive curvature and the geodesic flow of a hyperbolic surface. Later, Ghys did the same for Anosov geodesic flows (his proof extends to compact surfaces without conjugate points) \cite{ghys84}. In 2018, Gelfert and Ruggiero \cite{gelf19} found a time-preserving semi-conjugacy between geodesic flows of compact surfaces without focal points and expansive flows. This result was extended to the case of compact surfaces without conjugate points and continuous Green bundles \cite{gelf20}. The main contribution is the following.
\begin{teo}\label{main1}
Let $M$ be a compact surface without conjugate points of genus greater than one and $\phi_t$ be its geodesic flow. Then, there exists a continuous expansive flow $\psi_t$ that is time-preserving semi-conjugate to $\phi_t$, acting on a compact metric space $X$ of topological dimension at least two. Moreover, $\psi_t$ is topologically mixing and has a local product structure. 
\end{teo}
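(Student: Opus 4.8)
The plan is to realize $(X,\psi_t)$ as a quotient of $(SM,\phi_t)$ obtained by collapsing each class of mutually bi-asymptotic geodesics to a single orbit, in the spirit of \cite{gelf19,gelf20}, but replacing their hypotheses (no focal points, resp. continuous Green bundles) by the coarse hyperbolicity coming from the genus assumption. First I would assemble the geometric prerequisites available for a compact surface $M$ of genus at least two without conjugate points: the universal cover $(\widetilde M,\widetilde g)$ is quasi-isometric to $\mathbb H^2$, hence Gromov hyperbolic, so it has a well defined ideal boundary $\partial\widetilde M\cong S^1$ on which $\pi_1(M)$ acts as a convergence group; geodesics between points of $\widetilde M$ are unique and depend, up to bi-asymptoticity, only on their pair of endpoints in $\partial\widetilde M$; the geodesic flow carries the structure theory familiar from the nonpositively curved case, namely Busemann functions $b_\xi$ for $\xi\in\partial\widetilde M$, their horospheres, and stable/unstable sets $W^{ss},W^{uu}$, all depending continuously on $\xi$ and the base point; and any two bi-asymptotic geodesics of $\widetilde M$ stay within a distance bounded by a constant depending only on $M$ (Morse). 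The one genuinely new structural ingredient I would then have to establish is that on a bi-asymptotic class, with common endpoints $\xi,\eta$, the two Busemann functions satisfy $b_\xi+b_\eta\equiv\mathrm{const}$; this is the role played in \cite{gelf19} by the flat strip theorem and in \cite{gelf20} by the continuity of Green bundles, and here it should follow from the convexity properties of Busemann functions on surfaces without conjugate points together with the uniform boundedness of the class just mentioned.

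Granting this, define $v\sim w$ on $SM$ if some lifts $\widetilde v,\widetilde w$ generate geodesics with the same endpoints $\xi,\eta\in\partial\widetilde M$ and with $b_\xi(\widetilde v)=b_\xi(\widetilde w)$; equivalently $\widetilde w\in W^{ss}(\widetilde v)\cap W^{uu}(\widetilde v)$. I would check that $\sim$ is $\pi_1(M)$-invariant and $\phi_t$-equivariant — the normalization is preserved because $b_\xi$ decreases at unit speed along the geodesic, and the matching condition for $b_\eta$ is then automatic from the identity $b_\xi+b_\eta\equiv\mathrm{const}$ — and, crucially, that $\sim$ is closed, using uniform boundedness of classes, continuity of $(\xi,x)\mapsto b_\xi(x)$, and compactness. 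Then $X:=SM/\!\sim$ is a compact metrizable space, the quotient map $\chi\colon SM\to X$ is continuous and surjective, $\phi_t$ descends to a continuous flow $\psi_t$ with $\chi\circ\phi_t=\psi_t\circ\chi$, and the semi-conjugacy is time-preserving because $\sim$ never identifies two distinct points of one orbit ($\phi_sv\sim v$ with $s\ne 0$ would force $b_\xi(\phi_sv)=b_\xi(v)-s\ne b_\xi(v)$). For the lower bound on the topological dimension, observe that a horocycle arc transverse to the flow is mapped injectively by $\chi$ onto a nondegenerate arc (two of its points are $\sim$-related only if the corresponding geodesics share both endpoints, hence coincide), so $X$ contains a topological square spanned by the flow direction and the image of the arc, whence $\dim X\ge 2$.

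The heart of the argument is expansivity of $\psi_t$, and this is the step I expect to be the main obstacle. I would prove the contrapositive upstairs: if $v,w\in SM$ and a continuous reparametrization $\rho$ with $\rho(0)=0$ satisfy $d(\phi_tv,\phi_{\rho(t)}w)\le\varepsilon$ for all $t\in\mathbb R$ with $\varepsilon$ small, then $v\sim\phi_sw$ for a small $s$. Lifting and choosing the lift $\widetilde w$ shadowing $\widetilde v$, the two geodesics stay at bounded distance for $t\ge 0$ and for $t\le 0$; since in the Gromov hyperbolic space $\widetilde M$ geodesic rays with distinct endpoints diverge, $\widetilde v$ and $\widetilde w$ must have the same forward and the same backward endpoint, so they are bi-asymptotic. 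Because unit-speed geodesics at bounded distance cannot drift in parameter, $\sup_t|\rho(t)-t|<\infty$, and after replacing $w$ by $\phi_sw$ with $|s|$ controlled by $\varepsilon$ through the transverse behaviour of the Busemann level sets we obtain $b_\xi(\widetilde v)=b_\xi(\widetilde w)$, i.e. $v\sim\phi_sw$. The absence of conjugate points is what makes the geodesic flow regular enough for this (well defined horospheres, quantitative divergence) while the genus forces the global picture through $\partial\widetilde M$; the delicate point is the $\varepsilon\to 0$ control of $s$ without assuming any strict local hyperbolicity.

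Finally I would establish the local product structure and topological mixing. For the former, given $x=\chi(v)$ and $y=\chi(w)$ close in $X$, the geodesic of $\widetilde M$ with forward endpoint $\gamma_{\widetilde v}(+\infty)$ and backward endpoint $\gamma_{\widetilde w}(-\infty)$ exists (distinct ideal points of a Gromov hyperbolic space are joined by a geodesic) and is unique modulo the bi-asymptotic class that $\chi$ collapses; a Busemann normalization fixes its time parameter, and its image under $\chi$ is the required point $W^s_{\mathrm{loc}}(x)\cap W^u_{\mathrm{loc}}(\psi_{[-\delta,\delta]}y)$, with continuity of this bracket operation inherited from the continuous dependence of Busemann data on the endpoints. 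For topological mixing, I would first note that $\phi_t$ is topologically transitive — a vector whose lift has a pair of endpoints with dense $\pi_1(M)$-orbit in $\partial\widetilde M\times\partial\widetilde M$ has dense orbit — so $\psi_t$, being a factor, is transitive; then, since $\psi_t$ is an expansive transitive flow with a local product structure, it is either topologically mixing or a constant-roof suspension over a homeomorphism, and the latter is excluded because $M$ has closed geodesics of incommensurable lengths, giving periodic orbits of $\psi_t$ with rationally independent periods. Together these steps yield all the asserted properties of $\psi_t$.
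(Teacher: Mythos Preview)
Your overall architecture matches the paper's: quotient $T_1M$ by the relation ``same stable and same unstable horocycle'' (equivalently $\mathcal I(\theta)=\mathcal F^s(\theta)\cap\mathcal F^u(\theta)$), push the flow down, and verify expansivity, local product structure and mixing. Your shortcut for metrizability (the relation is closed, hence the quotient of a compact metric space is compact Hausdorff, hence metrizable) is valid and in fact cleaner than the paper's explicit basis construction in Section~4. The structural ingredient you flag, $b_\xi+b_\eta\equiv\mathrm{const}$ on a bi-asymptotic class, is exactly Corollary~\ref{key1} (from \cite{riff18}); you should cite it rather than appeal to unspecified convexity.

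There is, however, a genuine error in your dimension argument. You claim a horocycle arc is mapped injectively by $\chi$ because ``two of its points are $\sim$-related only if the corresponding geodesics share both endpoints, hence coincide.'' But sharing both endpoints does \emph{not} force the geodesics to coincide: a non-trivial class $\mathcal I(\theta)$ is precisely an arc inside the stable horocycle $\mathcal F^s(\theta)$ whose points all generate bi-asymptotic geodesics. So $\chi$ can collapse a whole subarc of your horocycle. The paper instead uses the vertical fiber $V_\theta$ (all unit vectors at a single point $p$): two distinct geodesics through $p$ diverge (no conjugate points plus uniform visibility), so $\chi$ is injective on $\bigcup_t\phi_t(V_\theta)$, giving the embedded cylinder. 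Separately, your expansivity sketch starts from $d_{SM}(\phi_tv,\phi_{\rho(t)}w)\le\varepsilon$, but the hypothesis is closeness in $X$; you need a lemma (the paper's Lemma~\ref{lev}, which is where the explicit basis is actually used) saying that $d_X([\xi],[\eta])$ small implies lifts with bounded Sasaki distance. Finally, for mixing you take an unnecessary detour through the suspension dichotomy and incommensurable periods; the paper simply observes that $\phi_t$ itself is topologically mixing (Eberlein, Theorem~\ref{v1}) and mixing passes to time-preserving factors.
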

The good dynamical properties of the factor flow $\psi_t$ allow us to apply Bowen-Franco's classical theory to get the unique measure of maximal entropy of $\psi_t$ \cite{fran77}. Combining this fact with Buzzi-Fisher-Sambarino-Vasquez's work \cite{buzzi12} about extensions of expansive flows we get the following application.
\begin{teo}\label{umme}
Let $M$ be a compact surface without conjugate points of genus greater than one. Then, its geodesic flow $\phi_t$ has a unique measure of maximal entropy.
\end{teo}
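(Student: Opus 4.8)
The plan is to transfer the uniqueness of the measure of maximal entropy (MME) from the expansive factor produced by Theorem~\ref{main1} to the geodesic flow $\phi_t$ itself. To begin, I would fix a time-preserving semi-conjugacy $\chi\colon T^1M\to X$ with $\chi\circ\phi_t=\psi_t\circ\chi$ as in Theorem~\ref{main1}, where $\psi_t$ is continuous, expansive, topologically mixing and carries a local product structure. Expansiveness together with topological mixing and a local product structure yields the specification property for $\psi_t$ (the flow analogue of Bowen's shadowing/specification argument), so Franco's extension of Bowen's theorem \cite{fran77} applies: $\psi_t$ has a \emph{unique} measure of maximal entropy $\nu$, which is moreover ergodic, fully supported, and realised as the weak$^{*}$ limit of the uniform measures on its periodic orbits.

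The next step is to show that $\chi$ preserves topological entropy, and in fact that $h_\mu(\phi_t)=h_{\chi_*\mu}(\psi_t)$ for every $\phi_t$-invariant Borel probability measure $\mu$. One inequality is automatic, since push-forward under a time-preserving factor map does not increase metric entropy. For the reverse one uses the description of $\chi$ coming from Theorem~\ref{main1}: the non-injectivity locus $\mathcal N\subset T^1M$ is the $\phi_t$-invariant set of vectors tangent to the non-trivial strips of bi-asymptotic geodesics, and each fibre $\chi^{-1}(x)$ is either a single point or a cross-section of such a strip, along which there is no exponential divergence; hence every fibre has zero topological entropy. The relative variational principle of Ledrappier and Walters (in its flow version — this is precisely the mechanism behind the extension results of Buzzi--Fisher--Sambarino--V\'asquez \cite{buzzi12}) then forces $h_\mu(\phi_t)=h_{\chi_*\mu}(\psi_t)$, so $h_{\mathrm{top}}(\phi_t)=h_{\mathrm{top}}(\psi_t)$ and a measure $\mu$ is an MME for $\phi_t$ if and only if $\chi_*\mu=\nu$.

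It then remains to prove that $\nu$ has a \emph{unique} $\phi_t$-invariant pre-image. I would show that $\nu\big(\chi(\mathcal N)\big)=0$: the set $\chi(\mathcal N)$ consists of the collapsed strip-orbits, along which $\psi_t$ has no transverse expansion, and $\nu$ — being the MME of an expansive, topologically mixing flow with local product structure, equidistributed along periodic orbits — cannot concentrate on such a set (this is the flow counterpart of the corresponding step in \cite{gelf19,gelf20}). Given $\nu(\chi(\mathcal N))=0$, any $\phi_t$-invariant $\mu$ with $\chi_*\mu=\nu$ satisfies $\mu(\mathcal N)=0$, and on the full-measure set $T^1M\setminus\mathcal N$ the map $\chi$ is a Borel isomorphism onto $X\setminus\chi(\mathcal N)$; therefore $\mu=(\chi|_{T^1M\setminus\mathcal N})^{-1}_{*}\nu$ is uniquely determined. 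Hence $\phi_t$ has exactly one measure of maximal entropy, which gives the alternative proof of the Climenhaga--Knieper--War theorem.

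The main obstacle is this last step — establishing $\nu(\chi(\mathcal N))=0$, equivalently ruling out a measure of maximal entropy of $\phi_t$ supported on the union of the non-trivial strips of bi-asymptotic geodesics. Once the fibres of $\chi$ are identified, the entropy equality of the second step is comparatively routine; but proving that the lift of $\nu$ is unique requires genuinely exploiting the dynamical richness of $\nu$ (mixing, local product structure, periodic-orbit equidistribution) against the thinness of the singular set, and it is here that the input of \cite{buzzi12} is indispensable.
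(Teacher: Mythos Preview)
Your overall architecture matches the paper's: get the unique MME $\nu$ for the expansive factor $\psi_t$ via Bowen--Franco, use zero fibre entropy to align the entropies, and then argue that $\nu$ lifts uniquely through $\chi$. The first two steps are essentially what the paper does (it cites \cite{gelf20} for $h(\phi_1,\mathcal{I}(\eta))=0$ and Bowen's formula for $h(\phi_1)=h(\psi_1)$, then packages the lifting step as the flow version of the Buzzi--Fisher--Sambarino--V\'asquez criterion, Proposition~\ref{samba}).

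The genuine gap is in your third step. You propose to prove $\nu(\chi(\mathcal N))=0$ by arguing on the factor: ``along $\chi(\mathcal N)$, $\psi_t$ has no transverse expansion, and $\nu$ \ldots\ cannot concentrate on such a set''. But this heuristic does not survive the quotient. On $X$ the flow $\psi_t$ is \emph{uniformly} expansive; the flat strips have been collapsed, so points of $\chi(\mathcal N)$ are dynamically indistinguishable from points of $\chi(\mathcal R_0)$ from the viewpoint of $\psi_t$ alone. There is no intrinsic $\psi_t$-criterion (mixing, local product structure, periodic-orbit equidistribution) that singles out $\chi(\mathcal N)$ as a null set, and the analogous step in \cite{gelf19,gelf20} is not carried out this way either. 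The paper resolves this by going back to the smooth side: it first constructs (via the Ledrappier--Walters/Bowen periodic-orbit averaging) an MME $\mu$ for $\phi_t$ with $\chi_*\mu=\nu$, and then invokes a Katok-type lemma (Lemma~\ref{ani}, taken from \cite{clim21}) stating that every \emph{ergodic} $\phi_t$-invariant measure with positive entropy gives full mass to $\mathcal R_0$. An ergodic component of $\mu$ with positive entropy therefore satisfies $\tau(\mathcal R_0)=1$, hence $\mu(\mathcal R_0)>0$; since $\chi^{-1}\chi(\mathcal R_0)=\mathcal R_0$, this gives $\nu(\chi(\mathcal R_0))>0$, and ergodicity of $\nu$ upgrades this to $\nu(\chi(\mathcal R_0))=1$. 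In short, the missing ingredient in your outline is precisely this Pesin-theoretic input on $T_1M$; the desired measure-zero statement is not obtainable from the topological dynamics of the factor alone.
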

This theorem was proved by Climenhaga, Knieper and War in 2020 for a family of compact $n$-manifolds without conjugate points, including compact higher genus surfaces \cite{clim21}. Their approach uses an extension of Bowen-Franco's theory done by Climenhaga and Thompson \cite{clim16}. Gelfert-Ruggiero's approach (assuming no focal points or continuous Green bundles) is different and gives a more direct alternative proof.

Let us briefly explain the new contributions of this work. The construction of the factor flow $\psi_t$ is based on an equivalence relation equivariant by the geodesic flow, which give rises a quotient space $X$ and a quotient flow $\psi_t$. A careful study of a basis of the quotient topology of $X$ is crucial in Gelfert-Ruggiero's works. This study relies on the structure of the expansive set $\mathcal{R}_0$, i.e., the set of points whose equivalence class is a singleton. Assuming continuity of Green bundles, $\mathcal{R}_0$ forms an open dense subset of the unit tangent bundle. However, in our setting, $\mathcal{R}_0$ might not be open because the openness of $\mathcal{R}_0$ is an open problem in general. Despite this, we show that $\mathcal{R}_0$ is dense on the complement of the set of points with non-trivial equivalence class, that turns out to be enough to get a basis of the quotient topology. 

Secondly, many ideas for the proof of the dynamical properties of $\psi_t$ in Gelfert-Ruggiero's papers rely on the fact that $X$ admits a 3-manifold structure. This allows us to choose a Riemannian metric in $X$ that can be lifted to the universal covering $\tilde{X}$ of $X$. The Riemannian structure of $\tilde{X}$ is important in the proofs. In our context, we do not know if $X$ admits a manifold structure. It is metrizable, but a metric in $X$ might not be a length metric, so it might not be lifted to $\tilde{X}$. We tackle this problem with more general topological arguments.

Finally, we study the possible loss of topological dimension of the quotient and show that topological dimension of $\tilde{X}$ is at least two.

The paper is organized as follows. Section 2 contains the preliminaries. In Section 3, we define the equivalence relation that give rises the quotient space and flow. Section 4 studies the topological properties of the factor flow. Section 5 deals with the topological dimension of the quotient space. Section 6 is concerned with the dynamical properties of the factor flow and we complete the proof of Theorem \ref{main1}. Finally, Section 7 is devoted to show the uniqueness of the measure of maximal entropy of the geodesic flow.

\section{Preliminaries}

\subsection{Compact manifolds without conjugate points}\label{m}
In this subsection we give basic definitions and notations that we use throughout the paper. Let $(M,g)$ be a $C^{\infty}$ compact connected Riemannian manifold, $TM$ be its tangent bundle and $T_1M$ be its unit tangent bundle. Consider the universal covering $\tilde{M}$ of $M$, the covering map $\pi:\tilde{M}\to M$ and the natural projection $d\pi:T\tilde{M}\to TM$. The universal covering $(\tilde{M},\tilde{g})$ is a complete Riemannian manifold with the pullback metric $\tilde{g}=\pi^*g$. A manifold $M$ has no conjugate points if the exponential map $\exp_p$ is non-singular at every $p\in M$. In particular, $\exp_p$ is a covering map for every $p\in M$ (p. 151 of \cite{doca92}).

Denote by $\nabla$ the Levi-Civita connection induced by $g$. A geodesic is a smooth curve $\gamma\subset M$ with $\nabla_{\dot{\gamma}}\dot{\gamma}=0$. For every $\theta=(p,v)\in TM$, denote by $\gamma_{\theta}$ the unique geodesic with initial conditions $\gamma_{\theta}(0)=p$ and $\dot{\gamma}_{\theta}(0)=v$. The geodesic flow $\phi_t$ is defined by 
\[ \phi: \mathbb{R}\times TM\to TM \qquad (t,\theta)\mapsto \phi_t(\theta)=\dot{\gamma}_{\theta}(t). \]
Parameterizing all geodesics by arc-length allows us to restrict $\phi_t$ to $T_1M$.

We now define a Riemannian metric on the tangent bundle $TM$ (Section 1.3 of \cite{pater97}). Denote by $P:TM\to M$ and $\tilde{P}:T\tilde{M}\to \tilde{M}$ the corresponding canonical projections. For every $\theta=(p,v)\in TM$, the Levi-Civita connection induces the so-called connection map $C_{\theta}:T_{\theta}TM\to T_pM$. These linear maps provide the linear isomorphism $T_{\theta}TM\to T_pM\oplus T_pM$ with $\xi\mapsto (d_{\theta}P(\xi),C_{\theta}(\xi))$. We define the horizontal subspace by $\mathcal{H}(\theta)=\ker(C_{\theta})$ and the vertical subspace by $\mathcal{V}(\theta)=\ker(d_{\theta}P)$. These subspaces decompose the tangent space by $T_{\theta}TM=\mathcal{H}(\theta)\oplus \mathcal{V}(\theta)$. For every $\xi,\eta\in T_{\theta}TM$, the Sasaki metric is defined by
\begin{equation}\label{sasa}
    \langle \xi,\eta \rangle_s = \langle d_{\theta}P(\xi), d_{\theta}P(\eta) \rangle_p + \langle C_{\theta}(\xi), C_{\theta}(\eta) \rangle_p.
\end{equation}
This metric induces a Riemannian distance $d_s$ usually called Sasaki distance.

For every $\theta\in T_1M$, denote by $G(\theta)\subset T_{\theta}T_1M$ the subspace tangent to the geodesic flow at $\theta$. Let $N(\theta)\subset T_{\theta}T_1M$ be the subspace orthogonal to $G(\theta)$ with respect to the Sasaki metric. For every $\theta\in T_1M$, $H(\theta)=\mathcal{H}(\theta)\cap N(\theta)$ and $V(\theta)=\mathcal{V}(\theta)\cap N(\theta)$.
From the above decomposition we have
\[  T_{\theta}T_1M=H(\theta)\oplus V(\theta)\oplus G(\theta) \quad \text{ and }\quad N(\theta)=H(\theta)\oplus V(\theta). \]
So, every $\xi\in N(\theta)$ has decomposition $\xi=(\xi_h,\xi_v)\in H(\theta)\oplus V(\theta)$. We call $\xi_h$ and $\xi_v$ the horizontal and vertical components of $\xi$ respectively.

\subsection{Horospheres and horocycles}\label{h}

In this subsection we assume that $(M,g)$ is a compact surface without conjugate points and genus greater than one. We introduce important asymptotic objects in the universal covering. We follow \cite{esch77} and part II of \cite{pesin77}. Let $\theta\in T_1\tilde{M}$ and $\gamma_{\theta}$ be the geodesic induced by $\theta$. We define the forward Busemann function by
\[ b_{\theta}: \tilde{M}\to \mathbb{R} \qquad p\mapsto b_{\theta}(p)=\lim_{t\to \infty}d(p,\gamma_{\theta}(t))-t. \]
From now on, for every $\theta=(p,v)\in T_1\Tilde{M}$ we denote $-\theta:=(p,-v)\in T_1\Tilde{M}$. The stable and unstable horosphere of $\theta$ are defined by
\[  H^+(\theta)=b_{\theta}^{-1}(0)\subset \tilde{M} \quad \text{ and }\quad  H^-(\theta)=b_{-\theta}^{-1}(0)\subset \tilde{M}. \]
We lift these horospheres to $T_1\tilde{M}$. Denote by $\nabla b_{\theta}$ the gradient vector field of $b_{\theta}$. We define the stable and unstable horocycle of $\theta$ by
\[  \mathcal{\tilde{F}}^s(\theta)=\{ (p,-\nabla_pb_{\theta}): p\in H^+(\theta) \}\quad \text{ and }\quad \mathcal{\tilde{F}}^u(\theta)= \{ (p,\nabla_pb_{-\theta}): p\in H^-(\theta) \}. \]
The horocycles project onto the horospheres by the canonical projection $\tilde{P}$. For every $\theta\in T_1\tilde{M}$, we define the stable and unstable families of horocycles by
\[ \mathcal{\tilde{F}}^s=(\mathcal{\tilde{F}}^s(\theta) )_{\theta\in T_1\tilde{M}} \quad \text{ and }\quad \mathcal{\tilde{F}}^u=( \mathcal{\tilde{F}}^u(\theta) )_{\theta\in T_1\tilde{M}}. \]
We also define the center stable and center unstable sets of $\theta$ by 
\[\mathcal{\tilde{F}}^{cs}(\theta)=\bigcup_{t\in \mathbb{R}} \mathcal{\tilde{F}}^s(\phi_t(\theta))  \quad \text{ and }\quad \mathcal{\tilde{F}}^{cu}(\theta)=\bigcup_{t\in \mathbb{R}} \mathcal{\tilde{F}}^u(\phi_t(\theta)).\]
We can define the above objects in the case of $T_1M$. For every $\theta\in T_1M$, 
\[  \mathcal{F}^*(\theta)=d\pi (\mathcal{\tilde{F}}^*(\tilde{\theta}))\subset T_1M \quad \text{ and }\quad \mathcal{F}^*=d\pi (\mathcal{\tilde{F}}^*), \quad *=s,u,cs,cu; \]
for any lift $\tilde{\theta}\in T_1\tilde{M}$ of $\theta$. Let us state some properties of these objects.
\begin{pro}[\cite{esch77,pesin77}]\label{h1}
Let $M$ be a compact surface without conjugate points of genus greater than one. Then, for every $\theta\in T_1\tilde{M}$,
\begin{enumerate}
    \item Busemann functions $b_{\theta}$ are $C^{1,L}$ with $L$-Lipschitz unitary gradient for a uniform constant $L>0$ \cite{knip86}.
    \item Horospheres $H^+(\theta),H^-(\theta)\subset \tilde{M}$ and horocycles $\mathcal{\tilde{F}}^s(\theta),\mathcal{\tilde{F}}^u(\theta)\subset T_1\tilde{M}$ are embedded curves.
    \item The families $\mathcal{\tilde{F}}^s,\mathcal{\tilde{F}}^u$ and $\mathcal{F}^s,\mathcal{F}^u$ are continuous foliations of $T_1\tilde{M},T_1M$ respectively, and invariant by the geodesic flow: for every $t\in \mathbb{R}$,    
    \begin{equation}
\Tilde{\phi}_t(\mathcal{\tilde{F}}^s(\theta))=\mathcal{\tilde{F}}^s(\Tilde{\phi}_t(\theta)).
    \end{equation}
\end{enumerate}
\end{pro}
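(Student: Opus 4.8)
The plan is to assemble the statement from the classical theory of Busemann functions on manifolds without conjugate points; essentially everything is contained in \cite{esch77,pesin77,knip86}, so what I would write is the skeleton of the argument together with the one genuinely hard input. I would organize it item by item.

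\textbf{Item (1).} I would write $b_\theta$ as the pointwise limit of $b_{\theta,t}(p)=d(p,\gamma_\theta(t))-t$. Each $b_{\theta,t}$ is $1$-Lipschitz, and the triangle inequality makes $t\mapsto b_{\theta,t}(p)$ non-increasing and bounded below by $-d(p,\gamma_\theta(0))$, so the limit exists and $b_\theta$ is $1$-Lipschitz. Since $\tilde M$ has no conjugate points, $\exp_q$ is a diffeomorphism of $T_q\tilde M$ onto $\tilde M$, so $p\mapsto d(p,q)$ is smooth off $q$ with unit radial gradient; hence each $b_{\theta,t}$ is $C^\infty$ on $\tilde M\setminus\{\gamma_\theta(t)\}$ with $|\nabla b_{\theta,t}|\equiv1$. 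On a compact higher genus surface without conjugate points geodesic rays diverge, which forces the unit gradient fields $\nabla b_{\theta,t}$ to converge locally uniformly as $t\to\infty$ to a continuous unit field $\nabla b_\theta$, so $b_\theta\in C^1$ with $|\nabla b_\theta|\equiv1$. The uniform $L$-Lipschitz bound on $\nabla b_\theta$ (over all $\theta$, using compactness of $M$ and the equivariance $b_{dg(\theta)}=b_\theta\circ g^{-1}$ under deck transformations $g$) is exactly Knieper's estimate on the second fundamental forms of horospheres in terms of the curvature bounds of the compact surface \cite{knip86}. I expect this last point to be the main obstacle: it is not soft and rests on comparison estimates for stable Jacobi fields.

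\textbf{Item (2).} Since $b_\theta\in C^1$ has nowhere-vanishing gradient, the implicit function theorem makes $H^+(\theta)=b_\theta^{-1}(0)$ a $C^1$ properly embedded $1$-submanifold of the surface $\tilde M$, i.e. an embedded curve, and likewise $H^-(\theta)=b_{-\theta}^{-1}(0)$. The stable horocycle is the image of $H^+(\theta)$ under the continuous section $p\mapsto(p,-\nabla_pb_\theta)$, hence homeomorphic to $H^+(\theta)$ and so an embedded curve, while $\tilde P$ restricted to it inverts this section, which gives the projection statement; the unstable case is symmetric, using $p\mapsto(p,\nabla_pb_{-\theta})$.

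\textbf{Item (3).} I would first note that two positively asymptotic geodesics have Busemann functions differing by a constant, so distinct stable horocycles are disjoint and $\tilde{\mathcal F}^s$ is a partition of $T_1\tilde M$ into embedded curves; then, since $\theta\mapsto b_\theta$ is continuous into $C^1_{\mathrm{loc}}(\tilde M)$ (again from no conjugate points and compactness, cf. \cite{esch77}), $\theta\mapsto\nabla b_\theta$ is continuous and one builds local product charts (a transversal times a leaf), exhibiting $\tilde{\mathcal F}^s$ as a continuous foliation; pushing down by $d\pi$ gives the same for $\mathcal F^s$, and symmetrically for the unstable objects. Flow invariance follows from the Busemann cocycle identity $b_{\phi_t(\theta)}=b_\theta-t$, which yields $H^+(\tilde\phi_t\theta)=b_\theta^{-1}(t)$ and $\tilde\phi_t(p,-\nabla_pb_\theta)=(\gamma_\theta(t),-\nabla_{\gamma_\theta(t)}b_\theta)=(\gamma_\theta(t),-\nabla b_{\tilde\phi_t\theta})$, hence $\tilde\phi_t(\tilde{\mathcal F}^s(\theta))=\tilde{\mathcal F}^s(\tilde\phi_t(\theta))$. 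After item (1), the remaining delicate point is verifying that this partition is genuinely a continuous foliation with local product structure, which is where continuity of $\theta\mapsto b_\theta$ and the good behaviour of the asymptote relation (both standard on higher genus surfaces thanks to the divergence of geodesics) enter.
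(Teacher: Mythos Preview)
The paper does not give its own proof of this proposition: it is stated as a background result and attributed entirely to \cite{esch77,pesin77,knip86}, with no argument supplied. Your proposal is therefore not competing with anything in the paper; it is a correct and well-organized outline of the classical argument behind the cited references, identifying accurately the one hard analytic input (Knieper's uniform Lipschitz bound on $\nabla b_\theta$) and deriving the rest by soft means. One small slip: in Item~(3) the displayed identity $\tilde\phi_t(p,-\nabla_pb_\theta)=(\gamma_\theta(t),-\nabla_{\gamma_\theta(t)}b_\theta)$ should have the geodesic through $(p,-\nabla_pb_\theta)$ on the right, not $\gamma_\theta$; the point is that integral curves of $-\nabla b_\theta$ are unit-speed geodesics carrying the level set $b_\theta^{-1}(0)$ to $b_\theta^{-1}(t)$, which is exactly what you want.
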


\subsection{Morse's shadowing and consequences}\label{special}
In 1924, Morse \cite{morse24} studied a special class of geodesics of closed surfaces of genus greater than one. These surfaces always admit a metric of negative curvature called hyperbolic metric. For this hyperbolic metric, its geodesics are called hyperbolic geodesics. 
\begin{teo}[\cite{morse24}]\label{mor}
Let $(M,g)$ be a compact surface without conjugate points of genus greater than one and $\tilde{M}$ be its universal covering. Then, there exists $R>0$ such that for every geodesic $\gamma\subset \tilde{M}$ there exists a hyperbolic geodesic $\gamma'\subset \tilde{M}$ with  Hausdorff distance between $\gamma$ and $\gamma'$ bounded above by $R$.
\end{teo}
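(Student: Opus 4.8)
The plan is to prove Morse's shadowing theorem by exploiting the fact that the identity map on $M$ is a quasi-isometry between the given metric $g$ and a fixed hyperbolic metric $g_0$ (both lifted to $\tilde M$), and that quasi-geodesics in a hyperbolic space are uniformly close to genuine geodesics. \emph{Step 1: the quasi-isometry.} Since $M$ is compact, the two Riemannian metrics $\tilde g = \pi^* g$ and $\tilde g_0 = \pi^* g_0$ on $\tilde M$ are bi-Lipschitz equivalent: there is $A\ge 1$ with $A^{-1}\tilde g_0 \le \tilde g \le A\tilde g_0$ as quadratic forms, hence $A^{-1}d_0(x,y)\le d(x,y)\le A\, d_0(x,y)$ for the induced distances, where $d$ and $d_0$ are the $\tilde g$- and $\tilde g_0$-distances. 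In particular $\mathrm{id}:(\tilde M, d)\to(\tilde M, d_0)$ is a $(A,0)$-quasi-isometry.

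\emph{Step 2: geodesics for $\tilde g$ are quasi-geodesics for $\tilde g_0$.} Let $\gamma\subset\tilde M$ be a $\tilde g$-geodesic, parameterized by $\tilde g$-arc length, so $d(\gamma(s),\gamma(t)) = |s-t|$ for all $s,t$ (this uses that $\tilde M$ has no conjugate points, so geodesics are globally minimizing — indeed $\exp_p$ is a diffeomorphism, p.~151 of \cite{doca92}). Composing with the bi-Lipschitz bound gives $A^{-1}|s-t| \le d_0(\gamma(s),\gamma(t)) \le A|s-t|$, i.e.\ $\gamma$ is an $(A,0)$-quasi-geodesic of $(\tilde M, d_0)$. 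Reparameterizing by $\tilde g_0$-arc length only worsens the constants by a bounded amount, so $\gamma$ is an $(A',\varepsilon')$-quasi-geodesic.

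\emph{Step 3: the Morse Lemma in $\mathbb{H}^2$.} Since $g_0$ is hyperbolic, $(\tilde M, d_0)$ is isometric to the hyperbolic plane $\mathbb{H}^2$, a Gromov-hyperbolic space with $\delta = \delta(\mathbb H^2)$. The Morse stability lemma for quasi-geodesics in $\delta$-hyperbolic spaces then provides a constant $R_0 = R_0(\delta, A', \varepsilon')$ such that the image of $\gamma$ lies within $\tilde g_0$-Hausdorff distance $R_0$ of a $\tilde g_0$-geodesic $\gamma'$; and conversely $\gamma'$ lies within $R_0$ of $\gamma$ (the standard two-sided Morse Lemma). Translating back through the bi-Lipschitz bound, the $\tilde g$-Hausdorff distance between $\gamma$ and $\gamma'$ is at most $R := A R_0$, a constant depending only on $(M,g)$ and the chosen $g_0$. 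This is exactly the claimed statement.

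\emph{Main obstacle.} The only real subtlety is Step 2: one must be sure that $\tilde g$-geodesics are genuinely globally minimizing, so that they are quasi-geodesics on \emph{all} of $\mathbb R$ rather than just locally; this is precisely where absence of conjugate points enters (via $\exp_p$ being a covering map, hence a diffeomorphism on the simply connected $\tilde M$). A secondary routine point is handling the reparameterization between the two arc-length parameters and keeping the quasi-geodesic constants uniform — this is harmless because the bi-Lipschitz constant $A$ is global. Everything else (the Morse Lemma itself) is a black-box theorem in coarse hyperbolic geometry; alternatively, one can cite Gromov \cite{grom87} or give Morse's original direct argument, but the quasi-isometry route is the cleanest.
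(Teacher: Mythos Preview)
The paper does not prove this theorem; it is stated with a citation to Morse's original 1924 paper \cite{morse24} and used as a black box. Your argument is correct and is the standard modern route: bi-Lipschitz equivalence of $g$ and a hyperbolic metric $g_0$ on the compact base lifts to $\tilde M$, absence of conjugate points makes $\tilde g$-geodesics globally minimizing and hence $(A,0)$-quasi-geodesics for $d_0$, and the Morse Lemma for quasi-geodesics in $\mathbb{H}^2$ finishes. One cosmetic remark: you do not actually need to reparameterize by $\tilde g_0$-arc length in Step~2, since the Morse Lemma applies to $(A,C)$-quasi-geodesics with arbitrary parameterization; the $(A,0)$ bound you already have suffices.
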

Given two geodesics $\gamma_1,\gamma_2\subset \Tilde{M}$, we say that $\gamma_1$ and $\gamma_2$ are asymptotic if $d(\gamma_1(t),\gamma_2(t))\leq C$ for every $t\geq 0$ and for some $C>0$. If the last inequality holds for every $t\in \mathbb{R}$, $\gamma_1$ and $\gamma_2$ are called bi-asymptotic. So, Theorem \ref{mor} says that $\gamma$ and $\gamma'$ are bi-asymptotic with respect to the hyperbolic distance. This gives a uniform bound between bi-asymptotic geodesics.
\begin{teo}\label{morse}
Let $(M,g)$ be a compact surface without conjugate points and genus greater than one. Then there exists $Q(M)>0$ such that the Hausdorff distance between any two bi-asymptotic geodesics is bounded above by $Q(M)$.
\end{teo}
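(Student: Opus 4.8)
The plan is to bootstrap the uniform bound on bi-asymptotic $g$-geodesics from Morse's shadowing (Theorem \ref{mor}) by passing through the hyperbolic metric, where asymptoticity of geodesics is completely rigid. Fix the hyperbolic metric $g_0$ on $M$ and let $R>0$ be the Morse constant from Theorem \ref{mor}, so that every $g$-geodesic $\gamma\subset\tilde M$ lies within $g_0$-Hausdorff distance $R$ of a $g_0$-geodesic $\gamma'$. Since $M$ is compact, the metrics $g$ and $g_0$ are bi-Lipschitz equivalent on $\tilde M$ (the ratio of the two distance functions is bounded above and below by positive constants $a\le b$ depending only on $M$); this lets me translate $g$-Hausdorff bounds into $g_0$-Hausdorff bounds and back, at the cost of these universal constants.

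The key steps, in order: First, given two $g$-bi-asymptotic geodesics $\gamma_1,\gamma_2\subset\tilde M$, produce their Morse-shadowing hyperbolic geodesics $\gamma_1',\gamma_2'$, each within $g_0$-distance $R$ of the corresponding $\gamma_i$. Second, observe that $\gamma_1'$ and $\gamma_2'$ are then $g$-bi-asymptotic (they are within bounded $g$-distance of the $g$-bi-asymptotic $\gamma_i$), hence $g_0$-bi-asymptotic by bi-Lipschitz equivalence. Third, invoke the rigidity of the hyperbolic plane: two bi-asymptotic geodesics in $(\tilde M,g_0)\cong\mathbb H^2$ share both endpoints on the ideal boundary, and two distinct geodesics of $\mathbb H^2$ with the same pair of endpoints do not exist — so $\gamma_1'$ and $\gamma_2'$ coincide (as a set; up to reparametrization). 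Actually one must be slightly careful: bi-asymptotic in the sense of the definition allows a constant $C$, and in $\mathbb H^2$ two geodesics staying within bounded distance forward must share a forward endpoint, and within bounded distance backward must share a backward endpoint, so indeed they are the same geodesic. Fourth, conclude that $\gamma_1$ and $\gamma_2$ both lie within $g_0$-Hausdorff distance $R$ of the \emph{same} hyperbolic geodesic $\gamma_1'=\gamma_2'$, hence within $g_0$-Hausdorff distance $2R$ of each other, hence within $g$-Hausdorff distance $2Rb/a=:Q(M)$ of each other, where the constant depends only on $M$.

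The main obstacle I anticipate is handling the definitional subtleties of ``bi-asymptotic'' and ``Hausdorff distance between geodesics'' carefully enough that the logical chain is airtight: in particular, verifying that the shadowing relation is symmetric/transitive enough to conclude $\gamma_1'$ and $\gamma_2'$ are bi-asymptotic to each other from the hypothesis on $\gamma_1,\gamma_2$, and confirming that the Morse shadow can be taken so that the parametrizations are compatible (or, alternatively, arguing purely with the images of the geodesics as subsets of $\tilde M$ and with Hausdorff distance of sets, which sidesteps reparametrization issues entirely). A secondary point worth stating explicitly is that Theorem \ref{mor} only asserts the existence of a shadowing hyperbolic geodesic, not uniqueness, so one should either note that the $g_0$-Hausdorff bound forces the shadow's endpoints, or simply pick one shadow for each $\gamma_i$ and let the $\mathbb H^2$-rigidity do the identification. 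Once these bookkeeping issues are pinned down, the proof is short: shadow, transport to $\mathbb H^2$, use that bounded-distance hyperbolic geodesics are equal, transport back.
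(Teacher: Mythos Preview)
Your proposal is correct and follows exactly the route the paper indicates: the paper presents Theorem~\ref{morse} as an immediate consequence of Morse's shadowing Theorem~\ref{mor} via the hyperbolic metric, without spelling out a formal proof. Your argument supplies precisely the details (bi-Lipschitz comparison of $g$ and $g_0$, coincidence of bi-asymptotic geodesics in $\mathbb{H}^2$, triangle inequality for Hausdorff distance) that the paper leaves implicit in the single sentence ``This gives a uniform bound between bi-asymptotic geodesics.''
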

For each $\theta\in T_1\tilde{M}$, we define the intersections 
\[ I(\theta)=H^+(\theta)\cap H^-(\theta)\subset \tilde{M} \quad \text{ and }\quad  \mathcal{\tilde{I}}(\theta)=\mathcal{\tilde{F}}^s(\theta)\cap \mathcal{\tilde{F}}^u(\theta)\subset T_1\tilde{M}.  \]
We call $\mathcal{\tilde{I}}(\theta)$ the class of $\theta$. For the canonical projection $\tilde{P}$ we have $\tilde{P}(\mathcal{\tilde{I}})=I(\theta)$.

We observe that for every $\eta=(q,w)\in \mathcal{\Tilde{I}}(\theta)$ with $q\in I(\theta)$, the geodesic $\gamma_{\eta}$ is bi-asymptotic to $\gamma_{\theta}$. So, we can translate the bounds of Theorem \ref{morse} from bi-asymptotic geodesics to intersections between horospheres and horocycles. This fact is included in the following proposition.
\begin{pro}\label{tops}
Let $M$ be a compact surface without conjugate points of genus greater than one and $\tilde{M}$ be its universal covering. Then, for every $\theta\in T_1\tilde{M}$
\begin{enumerate}
    \item $I(\theta)$ and $\mathcal{\tilde{I}}(\theta)$ are compact connected curves of $\tilde{M}$ and $T_1\tilde{M}$ respectively (Corollary 3.3 of \cite{riff18}).
    \item $Diam(I(\theta))\leq Q$ and $Diam(\mathcal{\tilde{I}}(\theta))\leq \tilde{Q}$ for some $Q(M),\tilde{Q}(M)>0$.
\end{enumerate}
\end{pro}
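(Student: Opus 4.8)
The plan is to derive both statements from Morse's shadowing (Theorems \ref{mor} and \ref{morse}) together with the structural facts about horospheres and horocycles collected in Proposition \ref{h1}. I would organize the argument in three steps.

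\emph{Step 1: the diameter bounds (part 2).} Fix $\theta \in T_1\tilde{M}$ and let $\eta = (q,w) \in \mathcal{\tilde{I}}(\theta)$. By definition $q \in H^+(\theta)$ means $b_\theta(q) = 0$, so the geodesic $\gamma_\eta$ with $\dot\gamma_\eta(0) = -\nabla_q b_\theta$ is forward asymptotic to $\gamma_\theta$; likewise $q \in H^-(\theta)$ forces $\gamma_\eta$ to be backward asymptotic to $\gamma_\theta$. Hence $\gamma_\eta$ and $\gamma_\theta$ are bi-asymptotic, and Theorem \ref{morse} gives Hausdorff distance at most $Q(M)$ between them. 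In particular $d(q, \gamma_\theta) \leq Q(M)$ for every $q \in I(\theta)$; since $I(\theta)$ is connected (Step 2) and each of its points is within $Q$ of the fixed geodesic $\gamma_\theta$, and since two bi-asymptotic geodesics sharing the asymptote class cannot stray from each other, a standard argument bounds $\mathrm{Diam}(I(\theta))$ by some $Q = Q(M)$ (e.g.\ $2Q(M)$ plus the contribution from the time-parameter, which is itself controlled because the flow direction is transverse to $I(\theta)$). For $\mathcal{\tilde{I}}(\theta) \subset T_1\tilde{M}$ one upgrades this to a Sasaki-distance bound: the base points lie in a set of diameter $\leq Q$, and the fibre directions $w = -\nabla_q b_\theta = \nabla_q b_{-\theta}$ are determined by the $C^{1,L}$ Busemann functions with uniformly $L$-Lipschitz unit gradient (Proposition \ref{h1}(1)), so the vertical components vary by a controlled amount, giving $\mathrm{Diam}(\mathcal{\tilde{I}}(\theta)) \leq \tilde Q(M)$.

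\emph{Step 2: compactness and connectedness (part 1).} Here I would invoke Corollary 3.3 of \cite{riff18} as cited, but sketch why it is plausible: $H^+(\theta)$ and $H^-(\theta)$ are embedded curves (Proposition \ref{h1}(2)), and $I(\theta)$ is their intersection. Closedness of $I(\theta)$ follows from continuity of $b_\theta$ and $b_{-\theta}$; boundedness follows from Step 1's diameter estimate; hence $I(\theta)$ is compact. Connectedness is the subtler point and is exactly what \cite{riff18} supplies: one shows that the two embedded curves $H^+(\theta)$ and $H^-(\theta)$, being graphs of Busemann-type functions over transverse directions, meet in a connected arc rather than a disconnected set — intuitively, if they met in two disjoint pieces one could produce bi-asymptotic geodesics violating the uniform Morse bound or the no-conjugate-points condition. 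The same transfers to $\mathcal{\tilde{I}}(\theta) = \mathcal{\tilde{F}}^s(\theta) \cap \mathcal{\tilde{F}}^u(\theta)$ via the canonical projection $\tilde P$, which restricts to a homeomorphism from $\mathcal{\tilde{I}}(\theta)$ onto $I(\theta)$ (the fibre direction over each base point is uniquely prescribed by $\nabla b_\theta$).

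\emph{Main obstacle.} The routine part is the diameter bound, which is essentially immediate from Theorem \ref{morse}. The genuine difficulty — and the reason one cites \cite{riff18} rather than proving it inline — is the \emph{connectedness} of $I(\theta)$ and $\mathcal{\tilde{I}}(\theta)$: in the absence of convexity of horoballs (which no-conjugate-points does not guarantee, unlike the non-positive or no-focal-points settings), ruling out a disconnected intersection of the two horospheres requires the finer structural analysis of \cite{riff18}. I would therefore present Step 2's connectedness as a citation, and give full details only for compactness and the diameter estimates, which follow cleanly from the Morse shadowing machinery and the Lipschitz regularity of Busemann functions already recorded in Proposition \ref{h1}.
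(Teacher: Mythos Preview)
Your proposal is correct and matches the paper's approach. In fact the paper gives no formal proof of this proposition at all: it simply remarks, just before the statement, that every $\eta \in \mathcal{\tilde I}(\theta)$ yields a geodesic bi-asymptotic to $\gamma_\theta$ so that the bounds of Theorem~\ref{morse} transfer to $I(\theta)$ and $\mathcal{\tilde I}(\theta)$, and it cites \cite{riff18} for part~1 --- exactly the two ingredients you identify, with your Step~1 and Step~2 supplying the details the paper omits.
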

We remark that for every $\theta \in T_1M$ and every lift $\tilde{\theta} \in T_1\tilde{M}$ of $\theta$, we have
\[ d\pi(\mathcal{\tilde{I}}(\tilde{\theta}))=\mathcal{I}(\theta)=\mathcal{F}^s(\theta)\cap \mathcal{F}^u(\theta). \]
\begin{defi}
Let $M$ be a compact surface without conjugate points and genus greater than one. We say that $\theta\in T_1M$ is an expansive point and $\mathcal{I}(\theta)$ is a trivial class if $\mathcal{I}(\theta)$ is a single point. Otherwise, $\theta$ is a non-expansive point and $\mathcal{I}(\theta)$ is a non-trivial class. 
\end{defi}
The expansive points form the so-called expansive set 
\[   \mathcal{R}_0=\{ \theta \in T_1M: \mathcal{F}^s(\theta)\cap \mathcal{F}^u(\theta)= \{ \theta \} \}. \]
The complement of $\mathcal{R}_0$ is called the non-expansive set. In addition, note that any non-trivial class $\mathcal{I}(\theta)$ has two boundary points. 
\begin{cor}[\cite{riff18}]\label{key1}
Let $M$ be a compact surface without conjugate points of genus greater than one and $\tilde{M}$ be its universal covering. For every $\theta\in T_1\tilde{M}$, if $\eta=(q,w)\in \mathcal{\tilde{F}}^s(\theta)$ and $\gamma_{\eta}$ is bi-asymptotic to $\gamma_{\theta}$ then 
\[ \eta \in \mathcal{\Tilde{I}}(\theta)=\mathcal{\tilde{F}}^s(\theta)\cap\mathcal{\tilde{F}}^u(\theta) \quad \text{ and }\quad q\in \Tilde{I}(\theta)=H^+(\theta)\cap H^-(\theta). \]
\end{cor}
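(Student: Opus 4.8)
The plan is to rephrase everything in terms of Busemann functions and reduce the conclusion to the vanishing of one additive constant. First I would spell out the hypothesis $\eta=(q,w)\in\mathcal{\tilde{F}}^s(\theta)$: since $\mathcal{\tilde{F}}^s$ is a foliation (Proposition \ref{h1}) and $\eta$ lies on both leaves $\mathcal{\tilde{F}}^s(\eta)$ and $\mathcal{\tilde{F}}^s(\theta)$, these leaves coincide; pushing forward by $\tilde{\phi}_t$ and projecting by $\tilde{P}$ gives $H^+(\tilde{\phi}_t\eta)=H^+(\tilde{\phi}_t\theta)$ for every $t$, so $b_\eta$ and $b_\theta$ have identical level sets with identical values, whence $b_\eta=b_\theta$; in particular $b_\theta(q)=0$ and $w=-\nabla_q b_\theta$. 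Running the same argument for $\mathcal{\tilde{F}}^u$ shows that the conclusion $\eta\in\mathcal{\tilde{I}}(\theta)$ and $q\in\tilde{I}(\theta)$ is equivalent to the single identity $b_{-\eta}=b_{-\theta}$, and that is what I would aim to prove.

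Next I would feed in the bi-asymptoticity. Reversing time, $\gamma_{-\eta}$ and $\gamma_{-\theta}$ are forward asymptotic geodesic rays starting at $q$ and at $p=\tilde{P}(\theta)$ respectively. For a compact surface of genus greater than one without conjugate points, distinct geodesic rays from a common point diverge, a consequence of Morse's shadowing (Theorem \ref{mor}; see also \cite{esch77,pesin77,riff18}), so the asymptote from $q$ to the endpoint at infinity of $\gamma_{-\theta}$ is unique. Since $\gamma_{-\eta}$ and the $b_{-\theta}$-gradient ray $t\mapsto\gamma_{(q,-\nabla_q b_{-\theta})}(t)$ are both such asymptotes, we get $w=\nabla_q b_{-\theta}$. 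Setting $c:=b_{-\theta}(q)$, this identifies $\eta=(q,\nabla_q b_{-\theta})$ with an element of $\mathcal{\tilde{F}}^u(\tilde{\phi}_c\theta)=\{(p',\nabla_{p'}b_{-\theta}):b_{-\theta}(p')=c\}$ (using $b_{-\tilde{\phi}_c\theta}=b_{-\theta}-c$), and the unstable form of the leaf argument in the first step yields $b_{-\eta}=b_{-\tilde{\phi}_c\theta}=b_{-\theta}-c$.

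It remains to show $c=0$. Here I would use that for every $\zeta\in T_1\tilde{M}$ the $C^1$ function $h_\zeta:=b_\zeta+b_{-\zeta}$ is nonnegative on $\tilde{M}$ and vanishes at $\tilde{P}(\zeta)$: indeed $d(x,\gamma_\zeta(t))+d(x,\gamma_\zeta(-s))\geq d(\gamma_\zeta(t),\gamma_\zeta(-s))=t+s$, the last equality because geodesics of $\tilde{M}$ minimize (no conjugate points), and letting $t,s\to\infty$ gives $h_\zeta\geq 0$. By the first two steps, $h_\eta=b_\eta+b_{-\eta}=b_\theta+b_{-\theta}-c=h_\theta-c$. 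Evaluating at $p$ gives $0=h_\theta(p)=h_\eta(p)+c$, hence $c\leq 0$ since $h_\eta\geq 0$; evaluating $h_\theta$ at $q$ gives $h_\theta(q)=b_\theta(q)+b_{-\theta}(q)=c$, hence $c\geq 0$ since $h_\theta\geq 0$. Therefore $c=0$, i.e. $b_{-\eta}=b_{-\theta}$, and the corollary follows.

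The step I expect to be the main obstacle is the divergence input in the second paragraph: this is the only place where the hypotheses "surface", "genus greater than one" and "no conjugate points" are used in an essential way, and one has to justify carefully that the asymptote from a point to a boundary point is unique — equivalently, that every geodesic asymptotic to $\gamma_{-\theta}$ has its velocity in the center stable set $\mathcal{\tilde{F}}^{cs}(-\theta)$. Once that is in place, the rest is bookkeeping with Busemann functions and the flow-invariance of the horocycle foliations from Proposition \ref{h1}.
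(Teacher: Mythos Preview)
The paper does not supply a proof of this corollary; it is quoted from \cite{riff18} and used as a black box throughout. There is therefore no in-paper argument to compare against.

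Your argument is correct. The reduction of the conclusion to the single identity $b_{-\eta}=b_{-\theta}$ via the foliation property of $\mathcal{\tilde{F}}^s$ and $\mathcal{\tilde{F}}^u$, the identification $\eta\in\mathcal{\tilde{F}}^u(\tilde{\phi}_c\theta)$ with $c=b_{-\theta}(q)$, and the squeeze $c=0$ from the nonnegativity of $h_\zeta=b_\zeta+b_{-\zeta}$ all go through as written. The only step requiring care is the one you already flag: that two geodesic rays from $q$ which are both forward-asymptotic to $\gamma_{-\theta}$ must coincide. Within the paper's framework this is available: uniform visibility of $\tilde{M}$ (the Eberlein theorem cited just before Theorem~\ref{v1}) gives uniqueness of the ray from $q$ to a given point at infinity, and Proposition~\ref{quesi} guarantees that the gradient ray $\gamma_{(q,-\nabla_q b_{-\theta})}$ is genuinely forward-asymptotic to $\gamma_{-\theta}$ (it lies on $\mathcal{\tilde{F}}^s(\tilde{\phi}_{-c}(-\theta))$). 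With those two citations in place your proof is complete and more explicit than what the paper offers.
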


\subsection{Visibility manifolds}
This subsection introduces visibility manifolds and state some of their dynamical and geometric properties. Let $M$ be a simply connected Riemannian manifold without conjugate points. For every $x,y\in M$, denote by $[x,y]$ the geodesic segment joining $x$ to $y$. For $z\in M$ we also denote by $\sphericalangle_z(x,y)$ the angle at $z$ formed by $[z,x]$ and $[z,y]$. We say that $M$ is a visibility manifold if for every $z\in M$ and every $\epsilon>0$ there exists $R(\epsilon, z)>0$ such that 
\[ \text{ if }x,y\in M \text{ with }d(z,[x,y])>R(\epsilon, z) \quad \text{ then }\quad \sphericalangle_z(x,y)<\epsilon.\]
If $R(\epsilon,z)$ does not depend on $z$ then $M$ is called a uniform visibility manifold.
\begin{teo}[\cite{eber72}]
If $M$ is a compact surface without conjugate points of genus greater than one then its universal covering is a uniform visibility manifold.
\end{teo}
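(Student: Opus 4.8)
The plan is to transport the uniform visibility property from the hyperbolic plane $\mathbb{H}^2$ to $\tilde{M}$ by means of Morse's shadowing. Since $M$ has genus greater than one it carries a hyperbolic metric $g_0$, and the lift of $g_0$ makes $\tilde{M}$ isometric to $\mathbb{H}^2$, which is a uniform visibility manifold because it has constant negative curvature. The metrics $g$ and $g_0$ are both smooth on the compact manifold $M$, hence bi-Lipschitz, so there is $c\geq 1$ with $c^{-1}d_0\leq d_g\leq c\,d_0$ on $\tilde{M}$; in particular the identity is a deck-equivariant quasi-isometry $(\tilde{M},g)\to\mathbb{H}^2$, and Theorem \ref{mor} promotes it to a correspondence between $g$-geodesics and hyperbolic geodesics whose Hausdorff distances (for either metric, up to the factor $c$) are uniformly bounded, in both directions. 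I would then establish the visibility axiom by contradiction. If it failed, there would be $\epsilon>0$ and points $z_n,x_n,y_n\in\tilde{M}$ with $d_g(z_n,[x_n,y_n])\to\infty$ but $\sphericalangle_{z_n}(x_n,y_n)\geq\epsilon$. Translating each $z_n$ into a fixed compact fundamental domain by a deck transformation and passing to a subsequence, I may assume $z_n\to z$; that the initial unit vectors $u_n,v_n$ of the $g$-segments $[z_n,x_n],[z_n,y_n]$ converge to distinct vectors $u,v\in T^1_z\tilde{M}$ with $\sphericalangle_z(u,v)\geq\epsilon$ (the exponential map of $\tilde{M}$ is a diffeomorphism, so these vectors are well defined); and, since $[x_n,y_n]$ is far from $z_n$, that $d_g(z_n,x_n),d_g(z_n,y_n)\to\infty$. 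By continuous dependence on initial conditions, $[z_n,x_n]$ and $[z_n,y_n]$ converge, uniformly on compact sets, to the geodesic rays $\gamma_{(z,u)}$ and $\gamma_{(z,v)}$.

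The contradiction comes from the comparison with $\mathbb{H}^2$. Shadow $[x_n,y_n]$, $[z_n,x_n]$ and $[z_n,y_n]$ by hyperbolic geodesics. Since $d_g(z_n,[x_n,y_n])\to\infty$, the hyperbolic geodesic shadowing $[x_n,y_n]$ recedes to hyperbolic distance tending to infinity from $z_n$; by the uniform visibility of $\mathbb{H}^2$ this forces the hyperbolic geodesics shadowing $[z_n,x_n]$ and $[z_n,y_n]$ to subtend hyperbolic angle tending to $0$ at $z_n$. Passing to the limit (recall $z_n\to z$, and that the Morse shadows depend continuously on the shadowed geodesics), the rays $\gamma_{(z,u)}$ and $\gamma_{(z,v)}$ converge to the same point at infinity and, by Theorem \ref{mor}, lie within a uniformly bounded Hausdorff distance of one fixed hyperbolic geodesic, hence a bounded distance apart. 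Thus $\gamma_{(z,u)}$ and $\gamma_{(z,v)}$ would be two distinct $g$-geodesic rays issuing from the common point $z$ and staying a bounded distance apart for all forward time. Ruling this out is the main obstacle: the quasi-isometry with $\mathbb{H}^2$ by itself cannot do it, because an angle is infinitesimal data and is not preserved by bi-Lipschitz maps; what is genuinely needed is the uniqueness of the geodesic ray from a point to a given point at infinity, equivalently the absence of a flat half-plane in $\tilde{M}$.

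To close this gap I would invoke the horospherical structure of Subsection \ref{h} together with Theorem \ref{mor}. Two geodesic rays through $z$ that remain a bounded distance apart are forward asymptotic, and — this is the crux of the whole argument — for a compact surface of genus greater than one without conjugate points the theory of \cite{esch77,pesin77,riff18}, built on Theorem \ref{morse} and the uniform bounds of Proposition \ref{tops}, identifies bounded-distance forward asymptoticity with membership in a common center stable set: there is $\theta\in T_1\tilde{M}$ with $(z,u),(z,v)\in\mathcal{\tilde{F}}^{cs}(\theta)$. By definition of $\mathcal{\tilde{F}}^{cs}(\theta)$ this yields $s,s'\in\mathbb{R}$ with $z\in H^+(\phi_s(\theta))\cap H^+(\phi_{s'}(\theta))$, $u=-\nabla_zb_{\phi_s(\theta)}$ and $v=-\nabla_zb_{\phi_{s'}(\theta)}$; but the Busemann functions $b_{\phi_t(\theta)}$ of a fixed point at infinity differ only by additive constants, hence share the same gradient field, so $u=-\nabla_zb_\theta=v$, contradicting $\sphericalangle_z(u,v)\geq\epsilon$. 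This completes the contradiction. All the constants obtained depend only on $(M,g)$, and the role of the compactness of $M$ — used above to keep the base points in a fixed compact set while passing to limits — is precisely what upgrades visibility to uniform visibility.
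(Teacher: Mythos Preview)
The paper does not prove this theorem at all: it is stated with a citation to Eberlein \cite{eber72} and used as a black box. There is therefore no proof in the paper to compare your proposal against.

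That said, a word on your argument itself. The overall strategy---transfer visibility from $\mathbb{H}^2$ via Morse shadowing, then rule out two distinct forward-asymptotic rays from a common point---is sound and is essentially how one would reconstruct Eberlein's result. But your ``crux'' step carries a real circularity risk as written. You invoke the horospherical machinery of Subsection~\ref{h} and the identification of bounded-distance forward asymptoticity with membership in a common center-stable leaf; however, in the paper's logical order this machinery (Proposition~\ref{h1}, Proposition~\ref{quesi}) is cited from \cite{esch77,pesin77,eber72}, and parts of it in the literature are developed \emph{after} or \emph{using} visibility-type hypotheses. In particular, the statement ``forward-asymptotic $\Rightarrow$ same Busemann class'' needs the bounded-asymptote property, which you have not derived independently of visibility; note that Proposition~\ref{quesi} is itself cited from \cite{eber72}. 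You should either (i) extract bounded asymptote directly from Theorem~\ref{mor} and the bi-Lipschitz comparison with $\mathbb{H}^2$, \emph{before} appealing to any horosphere theory, or (ii) bypass Busemann functions entirely and argue the divergence of rays from a common point straight from Morse shadowing and elementary hyperbolic geometry. Also, in your step~3 the phrase ``the hyperbolic geodesics shadowing $[z_n,x_n]$ and $[z_n,y_n]$ subtend hyperbolic angle tending to $0$ at $z_n$'' needs tightening: those shadows do not pass through $z_n$, so you should compare instead with the genuine hyperbolic segments $[z_n,x_n]_{\mathbb{H}^2}$, $[z_n,y_n]_{\mathbb{H}^2}$ and use that Morse shadowing keeps them uniformly close.
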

In 1973, Eberlein extended some transitivity properties of the geodesic flow to the case of compact manifolds without conjugate points. A foliation is called minimal if each of its leaves is dense.
\begin{teo}[\cite{eber72,eber73neg2}]\label{v1}
Let $M$ be a compact surface without conjugate points and genus greater than one. Then
\begin{enumerate}
    \item The horospherical foliations $\mathcal{F}^s$ and $\mathcal{F}^u$ are minimal.
    \item The geodesic flow $\phi_t$ is topologically mixing.
    \item For every $\theta,\xi \in T_1\tilde{M}$ with $\theta\not\in \mathcal{\tilde{F}}^{cu}(\xi)$ there exists $\eta_1,\eta_2\in T_1\tilde{M}$ such that
    \[ \mathcal{\tilde{F}}^s(\theta)\cap \mathcal{\tilde{F}}^{cu}(\xi)=\mathcal{\Tilde{I}}(\eta_1) \quad \text{ and }\quad  \mathcal{\tilde{F}}^s(\xi)\cap \mathcal{\tilde{F}}^{cu}(\theta)=\mathcal{\Tilde{I}}(\eta_2).\]
\end{enumerate}
\end{teo}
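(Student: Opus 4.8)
The plan is to obtain the three items, following Eberlein \cite{eber72,eber73neg2}, from the uniform visibility of $\tilde{M}$, its ideal boundary $\tilde{M}(\infty)$, the action of $\Gamma:=\pi_1(M)$ by deck transformations, and Morse's shadowing. I will use freely the standard features of this setting: in a uniform visibility manifold without conjugate points each $p\in\tilde{M}$ is joined to each $a\in\tilde{M}(\infty)$ by a \emph{unique} geodesic ray, with initial velocity $-\nabla_p b$ where $b$ is the Busemann function of $a$; any two distinct ideal points are joined by a geodesic; $\mathcal{\tilde{F}}^{cs}(\theta)$ (resp. $\mathcal{\tilde{F}}^{cu}(\theta)$) is exactly the set of unit vectors whose forward ideal endpoint $\theta^+:=\gamma_\theta(+\infty)$ (resp. backward ideal endpoint $\theta^-:=\gamma_\theta(-\infty)$) agrees with that of $\theta$; and inside such a leaf the horocycles $\mathcal{\tilde{F}}^s(\phi_t\theta)$, $t\in\mathbb{R}$, are the level sets of the relevant Busemann function. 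From the present paper I will use Morse's Theorem \ref{mor} in the form: two geodesics of $\tilde{M}$ with the same ordered pair of ideal endpoints are bi-asymptotic; together with Corollary \ref{key1}.

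For (1): since $M$ is compact, any $\Gamma$-orbit in $\tilde{M}$ is cocompact and hence accumulates at every point of $\tilde{M}(\infty)$, so the limit set of $\Gamma$ is the whole ideal boundary. As $\Gamma$ is non-elementary (genus greater than one), its limit set lies inside every non-empty closed $\Gamma$-invariant subset of $\tilde{M}(\infty)$, so every $\Gamma$-orbit in $\tilde{M}(\infty)$ is dense. To promote this to minimality of the foliation $\mathcal{F}^s$ one works with the closure of a leaf lifted to $T_1\tilde{M}$ and checks that the $\Gamma$-orbit of a single horocycle $\mathcal{\tilde{F}}^s(\tilde\theta)$ already realizes a dense set of forward ideal endpoints, while sliding along a fixed horocycle realizes (on a surface, by uniform visibility) a dense set of backward ideal endpoints and of base points on the corresponding horosphere; these combine to a $C^0$-dense family of unit vectors, so the leaf is dense in $T_1M$. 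The argument for $\mathcal{F}^u$ is symmetric. This ``transverse density'' upgrade, carried out in \cite{eber72,eber73neg2}, is the step I expect to be the main obstacle.

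Item (2) follows from (1) and from topological transitivity of $\phi_t$, which Eberlein likewise deduces from uniform visibility and compactness \cite{eber72,eber73neg2}. A topologically transitive flow on a compact connected metric space is topologically mixing unless it carries a non-constant continuous $f\colon T_1M\to S^1$ with $f\circ\phi_t=e^{i\alpha t}f$ for some real $\alpha\neq 0$; such an $f$ would semi-conjugate $\phi_t$ onto an irrational rotation flow, in contradiction with the minimality of the horospherical foliations from (1) (equivalently, Morse's shadowing forces the periods of closed geodesics to generate a dense additive subgroup of $\mathbb{R}$, which rules out $\alpha\neq 0$). Hence $\phi_t$ is topologically mixing.

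For (3) I exhibit $\eta_1$ directly; this is the part most directly within reach of the tools already assembled. Unwinding the definitions, $\mathcal{\tilde{F}}^s(\theta)$ is the set of unit vectors with forward endpoint $\theta^+$ based on the horosphere $H^+(\theta)$, and $\mathcal{\tilde{F}}^{cu}(\xi)$ is the set of all unit vectors with backward endpoint $\xi^-$; thus $\mathcal{\tilde{F}}^s(\theta)\cap\mathcal{\tilde{F}}^{cu}(\xi)$ consists of the unit vectors whose geodesic runs from $\xi^-$ to $\theta^+$ with base point on $H^+(\theta)$, and it is non-empty precisely when $\theta^+\neq\xi^-$ — the non-degenerate case the hypothesis $\theta\notin\mathcal{\tilde{F}}^{cu}(\xi)$ isolates. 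Using uniform visibility, pick a geodesic $\sigma$ with $\sigma(-\infty)=\xi^-$ and $\sigma(+\infty)=\theta^+$; since $b_\theta$ decreases from $+\infty$ to $-\infty$ along $\sigma$, after reparametrization $\sigma(0)\in H^+(\theta)$, and by uniqueness of the ray from $\sigma(0)$ to $\theta^+$ we get $\dot\sigma(0)=-\nabla_{\sigma(0)}b_\theta$, so $\eta_1:=(\sigma(0),\dot\sigma(0))\in\mathcal{\tilde{F}}^s(\theta)$, while $\eta_1^-=\xi^-$ gives $\eta_1\in\mathcal{\tilde{F}}^{cu}(\xi)$. Since $\mathcal{\tilde{F}}^s,\mathcal{\tilde{F}}^u$ are foliations (Proposition \ref{h1}), $\mathcal{\tilde{F}}^s(\eta_1)=\mathcal{\tilde{F}}^s(\theta)$ and $\mathcal{\tilde{F}}^u(\eta_1)$ equals the horocycle $\mathcal{\tilde{F}}^u(\phi_s\xi)\subseteq\mathcal{\tilde{F}}^{cu}(\xi)$ containing $\eta_1$; hence $\mathcal{\tilde{I}}(\eta_1)=\mathcal{\tilde{F}}^s(\eta_1)\cap\mathcal{\tilde{F}}^u(\eta_1)\subseteq\mathcal{\tilde{F}}^s(\theta)\cap\mathcal{\tilde{F}}^{cu}(\xi)$. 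Conversely, any $w\in\mathcal{\tilde{F}}^s(\theta)\cap\mathcal{\tilde{F}}^{cu}(\xi)$ has forward endpoint $\theta^+=\eta_1^+$ and backward endpoint $\xi^-=\eta_1^-$, so $\gamma_w$ and $\gamma_{\eta_1}$ are bi-asymptotic by Theorem \ref{mor}, and $w\in\mathcal{\tilde{F}}^s(\theta)=\mathcal{\tilde{F}}^s(\eta_1)$; Corollary \ref{key1} then yields $w\in\mathcal{\tilde{I}}(\eta_1)$. Therefore $\mathcal{\tilde{F}}^s(\theta)\cap\mathcal{\tilde{F}}^{cu}(\xi)=\mathcal{\tilde{I}}(\eta_1)$, and the identity for $\eta_2$ follows by interchanging the roles of $\theta$ and $\xi$.
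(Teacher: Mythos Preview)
The paper does not give its own proof of Theorem~\ref{v1}: the result is stated as a preliminary fact, attributed to Eberlein \cite{eber72,eber73neg2}, with no accompanying argument. So there is nothing in the paper to compare your attempt against.

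That said, your sketch broadly follows Eberlein's strategy. Item (3) is the most complete part and is essentially correct: the identification of $\mathcal{\tilde{F}}^s(\theta)\cap\mathcal{\tilde{F}}^{cu}(\xi)$ with the set of unit vectors having prescribed forward and backward ideal endpoints, together with Morse's theorem and Corollary~\ref{key1}, does pin the intersection down to a single class $\mathcal{\tilde{I}}(\eta_1)$. One caveat: the hypothesis $\theta\notin\mathcal{\tilde{F}}^{cu}(\xi)$ literally means $\theta^-\neq\xi^-$, whereas the non-degeneracy you need for the first intersection is $\theta^+\neq\xi^-$ (and for the second, $\xi^+\neq\theta^-$); so either the hypothesis as stated in the paper is slightly imprecise, or an extra word is needed to rule out the degenerate configurations. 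Items (1) and (2) remain outlines rather than proofs --- you flag this yourself for (1), and for (2) the passage from transitivity plus minimal horospherical foliations to mixing via an eigenfunction/period argument would need to be fleshed out to stand on its own.
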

We can transform item (3) into intersections of unstable horocycles and central stable sets. There exist $t_1,t_2\in \mathbb{R}$ such that
\[ \mathcal{\tilde{F}}^{cs}(\theta)\cap \mathcal{\tilde{F}}^u(\xi)=\mathcal{\Tilde{I}}(\Tilde{\phi}_{t_1}(\eta_1)) \quad \text{ and }\quad \mathcal{\tilde{F}}^{cs}(\xi)\cap\mathcal{\tilde{F}}^u(\theta)=\mathcal{\Tilde{I}}(\Tilde{\phi}_{t_2}(\eta_2)).  \]
The above intersections are called the heteroclinic connections of $\Tilde{\phi}_t$. 

Recall that for a compact manifold of negative curvature, its geodesic flow is uniformly hyperbolic \cite{anos67}. This provides invariant submanifolds (which agree with the horocycles) with hyperbolic behavior. However, for a general compact manifold without conjugate points, its geodesic flow might not be uniformly hyperbolic. Despite this, the horocycles still have some weak hyperbolicity. From Equation \eqref{sasa} in Subsection \ref{m}, we recall that $d_s$ is the Sasaki distance restricted to $T_1\Tilde{M}$.
\begin{pro}[\cite{eber72}]\label{quesi}%v1 o quasi
Let $M$ be a compact surface without conjugate points of genus greater than one and $\tilde{M}$ be its universal covering. Then, there exist $A,B>0$ such that for every $\theta\in T_1\tilde{M}$ and every $\eta\in \mathcal{\tilde{F}}^s(\theta)$, 
\[   d_s(\Tilde{\phi}_t(\theta),\Tilde{\phi}_t(\eta))\leq Ad_s(\theta,\eta)+B, \quad \text{ for every } t\geq 0.  \]
\end{pro}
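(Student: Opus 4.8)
The plan is to split the Sasaki estimate into a bundle part, which is soft and uses only that Busemann functions have uniformly Lipschitz unit gradient, and a base part which contains all the geometry and is the heart of the matter.

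\emph{Reduction to the base geodesics.} First I would note that, for $t\ge 0$, both $\gamma_\theta(t)$ and $\gamma_\eta(t)$ lie on the horosphere $b_\theta^{-1}(-t)=H^+(\tilde\phi_t(\theta))$ and that $\dot\gamma_\theta(t)=-\nabla_{\gamma_\theta(t)}b_\theta$, $\dot\gamma_\eta(t)=-\nabla_{\gamma_\eta(t)}b_\theta$. Indeed, $\eta\in\tilde{\mathcal F}^s(\theta)$ means $\eta=(q,-\nabla_q b_\theta)$ with $q\in H^+(\theta)$; the geodesic $\gamma_\eta$ is then the gradient line of $-\nabla b_\theta$ through $q$ (a standard fact for Busemann functions in the absence of conjugate points, see \cite{esch77}), hence positively asymptotic to $\gamma_\theta$ with $b_\theta$ decreasing at unit speed along it, while $\tilde\phi_t(\tilde{\mathcal F}^s(\theta))=\tilde{\mathcal F}^s(\tilde\phi_t(\theta))$ by Proposition \ref{h1} and the Busemann functions of geodesics on one orbit share the same gradient field. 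Now take the unit-speed minimizing $g$-geodesic $\sigma$ from $\gamma_\theta(t)$ to $\gamma_\eta(t)$ and lift it to $T_1\tilde M$ by $\hat\sigma(s)=(\sigma(s),-\nabla_{\sigma(s)}b_\theta)$; this is a path from $\tilde\phi_t(\theta)$ to $\tilde\phi_t(\eta)$ whose Sasaki length is at most $\sqrt{1+L^2}$ times the length of $\sigma$, since the horizontal component of $\hat\sigma'$ is $\sigma'$ and the vertical component is the covariant derivative along $\sigma$ of the $L$-Lipschitz unit field $-\nabla b_\theta$ (item (1) of Proposition \ref{h1}), of norm at most $L$. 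Hence
\[ d_s(\tilde\phi_t(\theta),\tilde\phi_t(\eta))\le \sqrt{1+L^2}\; d_g(\gamma_\theta(t),\gamma_\eta(t)),\qquad t\ge 0. \]
As the canonical projection is $1$-Lipschitz for the Sasaki metric, also $d_g(p,q)\le d_s(\theta,\eta)$ for the footpoints $p,q$ of $\theta,\eta$. So it suffices to produce uniform $A',B'>0$ with $d_g(\gamma_\theta(t),\gamma_\eta(t))\le A'\,d_g(p,q)+B'$ for all $t\ge 0$.

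\emph{The base estimate.} Here $\gamma_\theta$ and $\gamma_\eta$ are positively asymptotic geodesics of $\tilde M$. I would subdivide the segment $[p,q]$ by points $p=x_0,x_1,\dots,x_N=q$ with $d_g(x_i,x_{i+1})\le 1$ and $N\le d_g(p,q)+1$, and let $\sigma_i=\gamma_{(x_i,-\nabla_{x_i}b_\theta)}$ be the unit-speed ray from $x_i$ positively asymptotic to $\gamma_\theta$, so that $\sigma_0=\gamma_\theta$ and $\sigma_N=\gamma_\eta$. Granting a uniform constant $\kappa>0$ with the property that any two positively asymptotic unit-speed geodesics of $\tilde M$ whose footpoints are at distance at most $1$ remain at distance at most $\kappa$ for all nonnegative times, the triangle inequality along the chain yields $d_g(\gamma_\theta(t),\gamma_\eta(t))\le \sum_{i<N} d_g(\sigma_i(t),\sigma_{i+1}(t))\le N\kappa\le \kappa\,d_g(p,q)+\kappa$, which is the desired bound with $A'=B'=\kappa$, and therefore the proposition holds with $A=B=\sqrt{1+L^2}\,\kappa$.

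\emph{The main obstacle.} The whole difficulty is thus concentrated in the uniform closeness of positively asymptotic rays with nearby footpoints, which is precisely the visibility input attributed to \cite{eber72}. In nonpositive (or no focal) curvature the function $t\mapsto d_g(\sigma_i(t),\sigma_{i+1}(t))$ is convex and bounded, hence nonincreasing, and one may simply take $\kappa=1$; in our setting convexity is unavailable and one must instead use that $\tilde M$ is a uniform visibility manifold covering the compact $M$. The mechanism is that in a uniform visibility manifold asymptotic rays cannot diverge and reconverge, so $\sup_{t\ge 0}d_g(\sigma_1(t),\sigma_2(t))$ is finite for each pair, and a compactness argument — using the continuity of the stable horocycle foliation (Proposition \ref{h1}) together with cocompactness of the deck action on $T_1\tilde M$ — upgrades this to a bound $\kappa$ uniform over all pairs with $d_g(\sigma_1(0),\sigma_2(0))\le 1$. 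An alternative route runs the comparison through Morse's shadowing (Theorems \ref{mor}--\ref{morse}), replacing $\gamma_\theta,\gamma_\eta$ by bi-Lipschitz-close hyperbolic geodesics sharing an endpoint at infinity, where the analogous statement is elementary; some care with the mismatch of the two arc-length parametrizations is then needed. This uniform closeness lemma is the step I expect to be the genuine work; the rest is bookkeeping.
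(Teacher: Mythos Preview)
The paper does not prove this proposition at all: it is stated in the preliminaries with the citation \cite{eber72} and no argument is given, so there is no ``paper's own proof'' to compare your proposal against.

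That said, your sketch is sound and is essentially how one would reconstruct Eberlein's estimate. The reduction from the Sasaki distance to the base distance via the lift $\hat\sigma(s)=(\sigma(s),-\nabla_{\sigma(s)}b_\theta)$ is clean; the only technical wrinkle is that $b_\theta$ is merely $C^{1,L}$, so $\hat\sigma$ need not be $C^1$, but it is Lipschitz with constant $\sqrt{1+L^2}$ and that is all you use. The chain-of-geodesics argument for the base is also correct as written: your intermediate rays $\sigma_i$ are all gradient lines of $-b_\theta$, hence mutually positively asymptotic, and the endpoints $\sigma_0=\gamma_\theta$, $\sigma_N=\gamma_\eta$ match because $\theta=(p,-\nabla_p b_\theta)$ and $\eta=(q,-\nabla_q b_\theta)$ by definition of the stable horocycle. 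You have correctly isolated the only nontrivial step, namely the uniform constant $\kappa$ bounding the forward distance between two positively asymptotic rays with footpoints at distance at most $1$; this is precisely the ``bounded asymptote'' property that Eberlein establishes for uniform visibility manifolds covering a compact, and your proposed compactness-plus-continuity-of-$\tilde{\mathcal F}^s$ route is a viable way to recover it. The alternative via Morse shadowing also works but, as you note, requires controlling the reparametrization mismatch and is less direct.
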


\subsection{Some dynamical and ergodic properties of continuous flows on compact metric spaces}\label{p5}
We first introduce the dynamical properties. Let $\psi_t:X\to X$ be a continuous flow acting on a compact metric space $X$. We say that $\psi_t$ is topologically transitive if there exists a dense orbit. The flow $\psi_t$ is topologically mixing if for every open sets $A,B\subset X$ there exists $t_0>0$ such that $\psi_t(A)\cap B\neq \emptyset$ for $|t|\geq t_0$. For every $x\in X$ and every $\epsilon>0$, we define the
\[ \text{strong stable set of $x$: }W^{ss}(x)=\{ y\in X:d(\psi_t(x),\psi_t(y))\to 0 \text{ as }t\to \infty\}, \]
\[  \epsilon\text{-strong stable set: }W^{ss}_{\epsilon}(x)=\{ y\in W^{ss}(x):d(\psi_t(x),\psi_t(y))\leq \epsilon, \text{ for every }t\geq 0\}. \]
The strong unstable $W^{uu}(x)$ and $\epsilon$-strong unstable $W^{uu}_{\epsilon}(x)$ sets are defined similarly for $t\leq0$. 

The flow $\psi_t$ has a local product structure if for every $\epsilon>0$ there exists $\delta>0$ such that if $x,y\in X$ satisfy $d(x,y)\leq \delta$ then there exists a unique $\tau\in \mathbb{R}$ with 
\[ |\tau|\leq \epsilon \quad \text{ and } \quad W^{ss}_{\epsilon}(x)\cap W^{uu}_{\epsilon}(\phi_{\tau}(y))\neq \emptyset.\]
The orbit of the intersection point follows the orbit of $x$ in the future and the orbit of $y$ in the past.

The flow $\psi_t$ is expansive if there exists $\epsilon>0$ such that if $x,y\in X$ satisfy
\[   d(\phi_t(x),\phi_{\rho(t)}(y))\leq \epsilon \text{ for every }t \in \mathbb{R}\]
and some reparametrization $\rho$, then there exists $\tau\in [-\epsilon,\epsilon]$ with $y=\phi_{\tau}(x)$. We call $\epsilon$ a constant of expansivity of $\phi_t$. 

In the context of continuous flows without singularities acting on compact manifolds, the above definition is equivalent to Bowen-Walters expansivity definition \cite{bowen72}. We remark that Anosov flows are always expansive.

Let us define a special kind of semi-conjugacy between flows. Let $\phi_t:Y\to Y$ and $\psi_t:X\to X$ be two continuous flows acting on compact topological spaces. A map $\chi:Y\to X$ is called a time-preserving semi-conjugacy if $\chi$ is a continuous surjection satisfying $\chi\circ\phi_t=\psi_t\circ \chi$ for every $t\in \mathbb{R}$. In this case, we say that $\psi_t$ is time-preserving semi-conjugate to $\phi_t$ or is a time-preserving factor of $\phi_t$.

We now give the ergodic properties. Let $\psi_t:X\to X$ be a continuous flow acting on a compact metric space $(X,d)$. A Borel set $Z\subset X$ is invariant by the flow if $\psi_t(Z)=Z$ for every $t\in \mathbb{R}$. A probability measure $\nu$ on $X$ is invariant by the flow if $(\psi_t)_*\nu=\nu$ for every $t\in \mathbb{R}$. Denote by $\mathcal{M}(\psi)$ the set of all flow-invariant-measures on $X$. A measure $\nu\in \mathcal{M}(\psi)$ is ergodic if for every flow-invariant set $A\subset X$, we have either $\nu(Z)=0$ or $\nu(Z)=1$.

Let $Z\subset X$ be a flow-invariant Borel set and $\nu$ be a flow-invariant measure supported on $Z$. We define the metric entropy $h_{\nu}(\psi,Z)$ of $\nu$ with respect to the flow $\psi$ as the metric entropy $h_{\nu}(\psi_1,Z)$ with respect to its time-1 map $\psi_1$ \cite{walt00}. For $Z=X$ we write $h_{\nu}(\psi)$. When $Z$ is also compact, we define the topological entropy of $Z$ as follows. For every $\epsilon,T>0$ and every $x\in Z$, we define the $(T,\epsilon)$-dynamical balls by
\begin{equation}\label{dyna}
    B(x,\epsilon,T)=\{ y\in Z:d(\psi_s(x),\psi_s(y))<\epsilon,s\in [0,T] \}
\end{equation}
Denote by $M(T,\epsilon,Z)$ the minimum cardinality of any cover of $Z$ by $(T,\epsilon)$-dynamical balls. The topological entropy of $Z$ with respect to $\psi$ is
\[  h(\psi,Z)=\lim_{\epsilon\to 0}\limsup_{T\to \infty} \frac{1}{T} \log M(T,\epsilon,Z).\]
For $Z=X$ we write $h(\psi)$. We remark that $h(\psi,Z)=h(\psi_1,Z)$ where $h(\psi_1,Z)$ is the topological entropy of $Z$ with respect to the time-1 map $\psi_1$. For each flow-invariant compact set $Z\subset X$, the variational principle \cite{dina71} says 
\begin{equation}\label{varcon}
h(\psi,Z)= \sup_{\nu} h_{\nu}(\psi,Z),    
\end{equation}
where $\nu$ varies over all flow-invariant measures supported on $Z$. We say that $\mu\in \mathcal{M}$ supported on $Z$ is a measure of maximal entropy if $h_{\mu}(\psi,Z)$ achieves the maximum in \eqref{varcon}. If $Z=X$ and $\mu$ is the only measure satisfying this condition then $\mu$ is the unique measure of maximal entropy for the flow $\psi$. 

\section{The quotient flow}\label{s1}
In what follows we will assume that $(M,g)$ is a compact surface without conjugate points and genus greater than one. The main idea of the section is to built a factor flow of the geodesic flow of $(M,g)$. We follow the constructions of Gelfert and Ruggiero \cite{gelf19}.

The properties given in Subsection \ref{special} suggest that an equivalence relation identifying each curve $\mathcal{I}(\theta)$ with a single point $[\theta]$ will do the job. Two points $\theta, \eta \in T_1M$ are equivalent, $\theta \sim \eta$, if and only if $\eta\in \mathcal{F}^s(\theta)$ and for every $\tilde{\theta},\tilde{\eta}\in T_1\tilde{M}$ lifts of $\theta,\eta$ with $\tilde{\eta}\in \mathcal{\tilde{F}}^s(\tilde{\theta})$, it holds that $\gamma_{\tilde{\theta}}$ and $\gamma_{\tilde{\eta}}$ are bi-asymptotic. 
\begin{lem}\label{char1}
For every $\eta,\theta\in T_1M$, $\eta\sim\theta$ if and only if $\eta\in \mathcal{I}(\theta)$.
\end{lem}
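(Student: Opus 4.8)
The plan is to prove the two implications of the equivalence separately, transferring everything to the universal cover. The inputs I will use are Corollary~\ref{key1}, the observation recorded just before Proposition~\ref{tops} that any $\zeta\in\tilde{\mathcal{I}}(\tilde\theta)$ gives a geodesic $\gamma_\zeta$ bi-asymptotic to $\gamma_{\tilde\theta}$ (since $\zeta$ lies simultaneously in the stable and the unstable horocycle of $\tilde\theta$), and the identity $d\pi(\tilde{\mathcal{I}}(\tilde\theta))=\mathcal{I}(\theta)=\mathcal{F}^s(\theta)\cap\mathcal{F}^u(\theta)$ noted after Proposition~\ref{tops}. For the direction $\eta\sim\theta\Rightarrow\eta\in\mathcal{I}(\theta)$: by definition of $\sim$ we have $\eta\in\mathcal{F}^s(\theta)$, so there are lifts $\tilde\theta,\tilde\eta$ with $\tilde\eta\in\tilde{\mathcal{F}}^s(\tilde\theta)$, and, again by definition, $\gamma_{\tilde\eta}$ is bi-asymptotic to $\gamma_{\tilde\theta}$; Corollary~\ref{key1} then gives $\tilde\eta\in\tilde{\mathcal{I}}(\tilde\theta)$, hence $\eta=d\pi(\tilde\eta)\in d\pi(\tilde{\mathcal{I}}(\tilde\theta))=\mathcal{I}(\theta)$.

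For the converse $\eta\in\mathcal{I}(\theta)\Rightarrow\eta\sim\theta$: since $\mathcal{I}(\theta)\subset\mathcal{F}^s(\theta)$ the first clause of the definition of $\sim$ holds automatically, so it remains to verify that for \emph{every} pair of lifts $\tilde\theta,\tilde\eta$ with $\tilde\eta\in\tilde{\mathcal{F}}^s(\tilde\theta)$ the geodesics $\gamma_{\tilde\theta},\gamma_{\tilde\eta}$ are bi-asymptotic. From $\eta\in\mathcal{I}(\theta)=d\pi(\tilde{\mathcal{I}}(\tilde\theta))$ there is at least one lift $\bar\eta$ of $\eta$ lying in $\tilde{\mathcal{I}}(\tilde\theta)\subset\tilde{\mathcal{F}}^s(\tilde\theta)$, and for that lift $\gamma_{\bar\eta}$ is bi-asymptotic to $\gamma_{\tilde\theta}$. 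So everything reduces to showing that $d\pi$ is injective on each stable horocycle $\tilde{\mathcal{F}}^s(\tilde\theta)$; granting this, the arbitrary lift $\tilde\eta$ must coincide with $\bar\eta$, hence $\tilde\eta\in\tilde{\mathcal{I}}(\tilde\theta)$ and $\gamma_{\tilde\eta}$ is bi-asymptotic to $\gamma_{\tilde\theta}$, while replacing $\tilde\theta$ by another lift of $\theta$ merely conjugates the whole configuration by a deck transformation, which is an isometry and therefore preserves bi-asymptoticity.

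I expect this injectivity statement to be the main obstacle; equivalently, one must rule out a non-trivial deck transformation $T$ with $T(\tilde{\mathcal{F}}^s(\tilde\theta))=\tilde{\mathcal{F}}^s(\tilde\theta)$ (two leaves of $\tilde{\mathcal{F}}^s$ that meet coincide). Since $T$ commutes with the geodesic flow and $\tilde\phi_t(\tilde{\mathcal{F}}^s(\tilde\theta))=\tilde{\mathcal{F}}^s(\tilde\phi_t(\tilde\theta))$ by Proposition~\ref{h1}, such a $T$ would preserve every horosphere $H^+(\tilde\phi_t(\tilde\theta))=b_{\tilde\theta}^{-1}(-t)$, hence satisfy $b_{\tilde\theta}\circ T=b_{\tilde\theta}$; then $\gamma_{\tilde\theta}$ and $\gamma_{T^{-1}\tilde\theta}$ are unit-speed integral curves of the common Lipschitz gradient field $-\nabla b_{\tilde\theta}$ (Lipschitz by Proposition~\ref{h1}(1)), so they either have the same image or are disjoint. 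If they coincide, $T$ preserves the geodesic $\gamma_{\tilde\theta}$ and, acting freely, translates it by some $\ell\ne 0$, so $T\tilde\theta=\tilde\phi_\ell(\tilde\theta)\notin\tilde{\mathcal{F}}^s(\tilde\theta)$, a contradiction. If they are disjoint, the same applies to every power $T^{-n}$, so the $T$-orbit of the footpoint of $\tilde\theta$ is an unbounded subset of the single horosphere $H^+(\tilde\theta)$ escaping toward the two distinct endpoints of the axis of $T$ (every deck transformation of a compact surface is axial, its axis projecting to a closed geodesic), contradicting that both ends of $H^+(\tilde\theta)$ are asymptotic to the single point $\gamma_{\tilde\theta}(+\infty)$. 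This rules out such a $T$, establishes the injectivity, and finishes the proof.
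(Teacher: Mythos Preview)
Your forward implication $\eta\sim\theta\Rightarrow\eta\in\mathcal{I}(\theta)$ is identical to the paper's: lift, apply Corollary~\ref{key1}, project. For the reverse implication the paper simply writes ``The reverse implication is straightforward'' and gives no argument; you instead unpack what is actually needed, namely that $d\pi$ is injective on each stable horocycle $\tilde{\mathcal{F}}^s(\tilde\theta)$, and supply a proof of that. Your reduction is correct: once $d\pi|_{\tilde{\mathcal{F}}^s(\tilde\theta)}$ is injective, the ``for every lifts'' clause in the definition of $\sim$ collapses (up to simultaneous deck translation), and the single lift $\bar\eta\in\tilde{\mathcal{I}}(\tilde\theta)$ coming from $\mathcal{I}(\theta)=d\pi(\tilde{\mathcal{I}}(\tilde\theta))$ settles the matter.

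Your injectivity argument is essentially sound. Case~1 (the geodesics coincide) is clean. In Case~2 you invoke two facts that are not stated in the paper's preliminaries: that every nontrivial deck transformation of a compact higher-genus surface is axial with two distinct fixed points on the ideal boundary, and that both ends of a horosphere $H^+(\tilde\theta)$ converge to the single ideal point $\gamma_{\tilde\theta}(+\infty)$. Both are standard for uniform visibility manifolds (Eberlein), which $\tilde M$ is by the paper's own Theorem in Subsection~2.4, so this is only a minor expository gap rather than a logical one. The paper's authors presumably regard the injectivity of $d\pi$ on horocycles---equivalently, that no nontrivial deck transformation stabilises a stable leaf---as known background, which is why they call the direction straightforward; your version makes this hidden step explicit.
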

\begin{proof}
If $\eta\sim\theta$ then there exist lifts $\tilde{\eta},\tilde{\theta}$ of $\eta,\theta$ such that $\tilde{\eta}\in\mathcal{\tilde{F}}^s(\tilde{\theta})$ and $\gamma_{\tilde{\eta}},\gamma_{\tilde{\theta}}$ are bi-asymptotic. Corollary \ref{key1} says that $\tilde{\eta}\in \mathcal{\tilde{I}}(\tilde{\theta})$ and projecting by $d\pi$ we get $\eta\in \mathcal{I}(\theta)$. The reverse implication is straightforward.
\end{proof}
This lemma and the properties of the horospherical leaves guarantee the above relation is an equivalence relation. This relation induces a quotient space $X$ and a quotient map 
\[  \chi:T_1M\to X \qquad \theta\mapsto \chi(\theta)=[\theta],\]
where $[\theta]$ is the equivalence class of $\theta$. The geodesic flow and the quotient map induce a quotient flow 
\[   \psi:\mathbb{R}\times X\to X \qquad (t,[\theta])\mapsto \psi_t[\theta]=[\phi_t(\theta)]=\chi \circ \phi_t(\theta).\]
We endow $X$ with the quotient topology. We state below some properties of these new objects.
\begin{lem}
Let $M$ be a compact surface without conjugate points of genus greater than one. Then,
\begin{enumerate}
    \item The quotient space $X$ is a compact space.
    \item The quotient map $\chi$ is a time-preserving semi-conjugacy hence $\psi_t$ is a time-preserving factor of $\phi_t$: for every $t\in \mathbb{R}$,
    \begin{equation}\label{semi}
        \chi\circ \phi_t = \psi_t \circ \chi.
    \end{equation}
\end{enumerate}
\end{lem}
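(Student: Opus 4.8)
The plan is to verify the two items separately, using the fact (Lemma \ref{char1}) that the equivalence classes are exactly the sets $\mathcal{I}(\theta)$, together with their uniform boundedness from Proposition \ref{tops}.

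For item (1), I would argue that $X$ is compact and Hausdorff. Compactness is immediate: $X=\chi(T_1M)$ is the continuous image of the compact space $T_1M$ under the quotient map $\chi$, which is continuous by definition of the quotient topology. For the Hausdorff property, the standard criterion is that the equivalence relation $\mathord{\sim}\subset T_1M\times T_1M$ be a closed subset; since $T_1M$ is a compact metric space and $\chi$ is a closed map onto a quotient (saturations of closed sets are closed once the relation is closed), this yields that $X$ is a compact metrizable space. To see that $\mathord{\sim}$ is closed, take sequences $\theta_n\to\theta$ and $\eta_n\to\eta$ with $\eta_n\in\mathcal{I}(\theta_n)$; lifting to $T_1\tilde{M}$ so that $\tilde\eta_n\in\mathcal{\tilde{F}}^s(\tilde\theta_n)$ and using continuity of the foliation $\mathcal{\tilde F}^s$ (Proposition \ref{h1}(3)) one gets $\tilde\eta\in\mathcal{\tilde F}^s(\tilde\theta)$. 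The geodesics $\gamma_{\tilde\theta_n}$ and $\gamma_{\tilde\eta_n}$ have Hausdorff distance $\le \tilde Q$ by Proposition \ref{tops}(2) (or $Q$ on the base), and this bound passes to the limit, so $\gamma_{\tilde\theta}$ and $\gamma_{\tilde\eta}$ are bi-asymptotic; hence $\eta\sim\theta$ by Corollary \ref{key1} and Lemma \ref{char1}. Thus $\mathord{\sim}$ is closed and $X$ is compact (and Hausdorff, indeed metrizable).

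For item (2), continuity of $\chi$ and surjectivity hold by construction of the quotient topology, so it remains to check the intertwining relation $\chi\circ\phi_t=\psi_t\circ\chi$ and that $\psi_t$ is a well-defined continuous flow. The identity $\psi_t[\theta]=[\phi_t(\theta)]$ is exactly the definition of $\psi_t$, so the equivariance is built in; what must be justified is that this definition does not depend on the representative, i.e.\ that $\phi_t$ maps $\sim$-classes to $\sim$-classes. This follows from invariance of the horospherical foliation under the geodesic flow (Proposition \ref{h1}(3): $\tilde\phi_t(\mathcal{\tilde F}^s(\theta))=\mathcal{\tilde F}^s(\tilde\phi_t(\theta))$, and similarly for $\mathcal{\tilde F}^u$), which gives $\tilde\phi_t(\mathcal{\tilde I}(\theta))=\mathcal{\tilde I}(\tilde\phi_t(\theta))$ and hence $\phi_t(\mathcal{I}(\theta))=\mathcal{I}(\phi_t(\theta))$; so $\theta\sim\eta$ implies $\phi_t(\theta)\sim\phi_t(\eta)$. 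Continuity of $\psi:\mathbb{R}\times X\to X$ then follows from continuity of $\phi:\mathbb{R}\times T_1M\to T_1M$ by a standard quotient argument: $\mathrm{id}_{\mathbb{R}}\times\chi$ is a quotient map (being a product of a quotient map with the identity of a locally compact space), and $\chi\circ\phi$ descends through it. The flow properties $\psi_0=\mathrm{id}$ and $\psi_{t+s}=\psi_t\circ\psi_s$ are inherited directly from $\phi_t$. Therefore $\chi$ is a time-preserving semi-conjugacy and $\psi_t$ is a time-preserving factor of $\phi_t$.

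The main obstacle is the Hausdorff (equivalently, closedness of $\sim$) part of item (1): one needs the uniform Hausdorff bound between bi-asymptotic geodesics (Theorem \ref{morse}, Proposition \ref{tops}) to control limits, and one must be careful that the ``for every lift'' clause in the definition of $\sim$ is handled correctly — it suffices to check bi-asymptoticity for one compatible pair of lifts, since deck transformations are isometries and Corollary \ref{key1} makes membership in $\mathcal{I}$ a lift-independent condition. The remaining steps are routine point-set topology once the boundedness and foliation-continuity inputs are in place.
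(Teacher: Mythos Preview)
Your argument is correct, and for what the lemma actually asserts it matches the paper's proof: compactness is the continuous image of a compact space, and for item (2) both you and the paper verify well-definedness of $\psi_t$ via the $\tilde\phi_t$-invariance of $\mathcal{\tilde F}^s$ and $\mathcal{\tilde F}^u$ (hence of $\mathcal{\tilde I}$), with your treatment of continuity of $\psi$ being more explicit than the paper's one-line appeal to the quotient definition.

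The genuine difference is that you go beyond the statement and prove Hausdorffness (hence metrizability) already here, via closedness of the relation $\sim$ in $T_1M\times T_1M$. The paper postpones this to Section~\ref{s3}, where Hausdorffness is obtained instead by constructing an explicit basis of saturated open neighborhoods $A(\theta,\epsilon,\delta,\tau)$ and separating distinct classes by choosing $\epsilon,\delta$ small (Lemma~\ref{que1} and the subsequent lemma invoking Proposition~\ref{metri}). Your route is shorter and uses only the uniform diameter bound $\tilde Q$ on $\mathcal{\tilde I}(\theta)$ together with continuity of the horospherical foliation, whereas the paper's route, though heavier, produces as a by-product the concrete local description of the quotient topology that is needed later for the dimension estimate, expansivity, and local product structure. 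Both are valid; just be aware that in this lemma the paper is only claiming compactness, not yet Hausdorff or metrizable.
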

\begin{proof}
For item 1, from the definition of the quotient topology we see that $\chi$ is a continuous surjection. Since $T_1M$ is compact so is $X$.

For item 2, let us first show that $\psi_t$ is well-defined. Let $[\eta]\in X$ and $\xi\in T_1M$ be any representative of the equivalence class of $\eta$. So, we have $\xi\sim \eta$ hence $\xi\in \mathcal{I}(\eta)=\mathcal{F}^s(\eta)\cap \mathcal{F}^u(\eta)$ by Lemma \ref{char1}. By the invariance of the horospherical foliations it follows that $\phi_t(\xi)\in \mathcal{F}^s(\phi_t(\eta))\cap \mathcal{F}^u(\phi_t(\eta))=\mathcal{I}(\phi_t(\eta))$ for every $t\in\mathbb{R}$. This means that $\phi_t(\xi)\sim \phi_t(\eta)$ and hence $\psi_t[\xi]=[\phi_t(\xi)]=[\phi_t(\eta)]=\psi_t[\eta]$. Since $\chi$ is a continuous surjection, the item follows from the definition of the quotient flow.
\end{proof}
The following concept is useful for defining certain open sets of the quotient topology.
\begin{defi}
Let $A$ be a subset of $T_1M$. We say that $A$ is saturated with respect to $\chi$, or simply saturated, if $\chi^{-1}\circ \chi(A)=A$.
\end{defi}
\begin{lem}
Let $M$ be a compact surface without conjugate points of genus greater than one. Then,
\begin{enumerate}
    \item For every $\eta \in T_1M$, $\chi^{-1}\circ \chi (\eta)=\mathcal{I}(\eta)$.
    \item For every open saturated set $U\subset T_1M$, $\chi(U)$ is an open set in $X$.
\end{enumerate}
\end{lem}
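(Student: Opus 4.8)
The plan is to treat both items as formal consequences of the definitions, with no new geometric input beyond Lemma~\ref{char1} and the fact that $\sim$ is an equivalence relation.

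For item~1, I would unwind the composition: $\chi^{-1}\circ\chi(\eta)$ is by definition the fiber $\chi^{-1}([\eta])=\{\xi\in T_1M:\chi(\xi)=[\eta]\}$, that is, the set of $\xi$ whose equivalence class coincides with that of $\eta$. Since $\sim$ is an equivalence relation, $[\xi]=[\eta]$ holds precisely when $\xi\sim\eta$, and Lemma~\ref{char1} translates $\xi\sim\eta$ into $\xi\in\mathcal{I}(\eta)$. Chaining these equivalences gives $\chi^{-1}\circ\chi(\eta)=\mathcal{I}(\eta)$; in particular the fibers of $\chi$ are exactly the classes $\mathcal{I}(\eta)$.

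For item~2, I would invoke the universal property of the quotient topology on $X$: a subset $W\subset X$ is open if and only if its preimage $\chi^{-1}(W)$ is open in $T_1M$. Applying this with $W=\chi(U)$, it suffices to check that $\chi^{-1}(\chi(U))$ is open in $T_1M$. But the hypothesis that $U$ is saturated says exactly that $\chi^{-1}\circ\chi(U)=U$, which is open by assumption; hence $\chi(U)$ is open in $X$.

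Neither step presents a genuine obstacle; the only care needed is to keep the three objects $\eta$, the point $\chi(\eta)=[\eta]\in X$, and the fiber $\chi^{-1}(\chi(\eta))\subset T_1M$ notationally distinct, and to recall that the quotient topology is precisely what makes ``open saturated preimage'' equivalent to ``open image''. All the substantive content --- compactness and the curve structure of $\mathcal{I}(\theta)$, and the bi-asymptoticity characterization --- has already been absorbed into Lemma~\ref{char1} and Proposition~\ref{tops}, so this lemma is a short bookkeeping verification.
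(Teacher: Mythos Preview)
Your proposal is correct and follows essentially the same route as the paper: item~1 is reduced to Lemma~\ref{char1} by unwinding the fiber $\chi^{-1}[\eta]$ as the equivalence class of $\eta$, and item~2 is the standard quotient-topology argument using $\chi^{-1}(\chi(U))=U$. The paper's proof is the same bookkeeping verification, only slightly more terse.
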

\begin{proof}
For item 1, $\chi^{-1}\circ \chi(\eta)=\chi^{-1}[\eta]$ implies that $\chi^{-1}\circ \chi(\eta)$ is the equivalence class of $\eta$ seen as subset of $T_1M$. By Lemma \ref{char1}, this equivalence class agree exactly with $\mathcal{I}(\eta)$. For item 2, 
by definition $\chi(U)$ is open in the quotient topology if $\chi^{-1}(\chi(U))$ is open in $T_1M$. The result follows since $\chi^{-1}(\chi(U))=U$.
\end{proof}
We extend the above construction to the geodesic flow $\tilde{\phi}_t$ of $(\tilde{M}, \tilde{g})$. Following Lemma \ref{char1}, we say that $\eta,\theta\in T_1\tilde{M}$ are equivalent if and only if $\eta\in \mathcal{\tilde{I}}(\theta)$. As before, the equivalence relation analogously induces a quotient space $\tilde{X}$, a quotient map $\tilde{\chi}$ and a quotient flow $\tilde{\psi}_t$. In a similar way, $\tilde{\chi}$ is a time-preserving semi-conjugacy between $\tilde{\phi}_t$ and $\tilde{\psi}_t$. Moreover, since $T_1\tilde{M}$ is a covering space of $T_1M$ with covering map $d\pi$, we see that $\tilde{X}$ is also a covering space of $X$ with a covering map $\Pi$ induced by $d\pi$:
\begin{equation}\label{cove}
    \Pi:\tilde{X}\to X, \qquad [\Tilde{\eta}]\mapsto \Pi[\Tilde{\eta}]=\chi\circ d\pi(\Tilde{\eta}).
\end{equation}
It is not hard to show that $\Pi$ is a well-defined covering map using the map $d\pi$. 

\section{Topological properties of the quotient flow}\label{s3}
In this section we build a special basis of neighborhoods of the quotient topology of $X$. We extend a construction made by Gelfert and Ruggiero for the case of compact higher genus surfaces without focal points (Section 4 of \cite{gelf19}). As an application, we show that $X$ is a compact metrizable space.

We highlight that for every $\eta\in T_1M$, the set $\mathcal{F}^s(\eta)$ ($\mathcal{F}^u(\eta)$) is the orbit of a continuous complete flow: the stable (unstable) horocycle flow. The same holds for $\mathcal{\tilde{F}}^s(\tilde{\eta}),\mathcal{\tilde{F}}^u(\tilde{\eta})$ for every lift $\tilde{\eta}\in T_1\tilde{M}$ of $\eta$. Each of such sets is a Lipschitz embedded curve and can be parametrized by arc-length $c^s_{\eta}: \mathbb{R}\to \mathcal{F}^s(\eta), c^u_{\eta}: \mathbb{R}\to \mathcal{F}^u(\eta),\tilde{c}^s_{\eta}: \mathbb{R}\to \mathcal{\tilde{F}}^s(\tilde{\eta})$ and $\tilde{c}^u_{\eta}: \mathbb{R}\to \mathcal{\tilde{F}}^u(\tilde{\eta})$. In particular, each connected subset of $\mathcal{F}^{s,u}(\eta),\mathcal{\tilde{F}}^{s,u}(\tilde{\eta})$ is homeomorphic to an interval.

The construction of a basis for the quotient space requires a better understanding of the set of expansive points $\mathcal{R}_0\subset T_1M$. Assuming no focal points or continuous Green bundles, Gelfert and Ruggiero showed that $\mathcal{R}_0$ is open and dense. This might no longer be the case if we drop these hypothesis. Let us start with the following elementary lemma.
\begin{lem}\label{int}
Any real interval cannot be the union of disjoint closed intervals.
\end{lem}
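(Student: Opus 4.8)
The statement is to be read as: a nondegenerate interval $I\subseteq\mathbb{R}$ is not the union of a family of two or more pairwise disjoint closed intervals $J_n=[a_n,b_n]$ with $a_n<b_n$. (Degenerate pieces must be excluded here, since $\mathbb{R}=\bigcup_{x\in\mathbb{R}}\{x\}$ would otherwise be a counterexample; note also that pairwise disjoint nondegenerate subintervals of $\mathbb{R}$ are automatically at most countably many, so the family is indexed by a countable set.) I argue by contradiction. The case of finitely many pieces is immediate from connectedness: ordering them from left to right, disjointness forces $b_1<a_2$, so any $c$ with $b_1<c<a_2$ lies in $I$ (because $I$ is an interval containing $b_1$ and $a_2$) but in no $J_n$, contradicting $\bigcup_n J_n=I$. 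The same observation shows, more generally, that no bounded open subinterval of $I$ can be covered by finitely many of the $J_n$.

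For the general case I would pass to the endpoint set $E:=I\setminus\bigcup_n(a_n,b_n)$. Disjointness of the $J_n$ makes $E$ exactly the countable set $\{a_n,b_n:n\}$; it is closed in $I$ (complement of an open set) and nonempty (for instance $a_1\in E$). The crucial claim is that $E$ has no isolated points apart from, possibly, the at most two endpoints of $I$ itself. Indeed, let $p\in E$ be interior to $I$, say $p=b_n$. For every small $\varepsilon>0$ the point $p+\varepsilon$ lies in $I$ but not in $J_n$, hence in some $J_m$ with $m\neq n$; since $p\notin J_m$ and $p+\varepsilon\in[a_m,b_m]$, necessarily $p<a_m\le p+\varepsilon$. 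Letting $\varepsilon\to 0^+$, these left endpoints $a_m$ converge to $p$ while remaining distinct from $p$, and they realize infinitely many values — otherwise a right neighborhood of $p$ would be covered by finitely many $J_n$, against the finite-case observation. Hence $p$ is an accumulation point of $E$; the case $p=a_n$ is symmetric.

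Deleting from $E$ the at most two endpoints of $I$ leaves a set $E'$ which is still nonempty — some endpoint of $J_1$ is not an endpoint of $I$, since otherwise $J_1=I$ and there is no room for a second piece — and now has no isolated points at all. But $E'$ is a $G_\delta$ subset of the interval $I$, hence completely metrizable, and a nonempty completely metrizable space without isolated points is uncountable by the Baire category theorem. This contradicts $E'\subseteq E$ being countable, and the lemma follows.

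The only step that goes beyond soft connectedness arguments is the case of infinitely many pieces, where one is genuinely forced into a Baire-category argument; equivalently, that case is a special instance of Sierpiński's theorem that a nondegenerate continuum is not a countable union of two or more pairwise disjoint nonempty closed sets, applied to a closed subinterval of $I$. The one place that needs real care is verifying that $E$ accumulates onto each of its non-extreme points — which is precisely where the finite-case remark gets reused, to rule out that a one-sided neighborhood of such a point is covered by only finitely many pieces — together with the minor bookkeeping around the endpoints of $I$ and the exclusion of degenerate pieces.
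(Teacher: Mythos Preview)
Your proof is correct. The paper actually gives no proof of this lemma at all: it is stated as an ``elementary lemma'' and left unproved, so there is nothing to compare your argument against. Your reading of the statement --- a nondegenerate interval cannot be written as the disjoint union of two or more nondegenerate closed intervals --- is the right one, and it is exactly what is used in the very next lemma of the paper, where the pieces are the nontrivial classes $\mathcal{\tilde I}(\eta)$ restricted to a stable leaf.

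Your argument is sound in both regimes. The finite case is handled by the ordering/connectedness remark, and your passage to the endpoint set $E$ for the infinite case is the standard route to Sierpi\'nski's theorem in this one-dimensional setting. The verification that every point of $E$ interior to $I$ is an accumulation point of $E$ is done carefully, including the reuse of the finite-case observation to exclude the possibility that a one-sided neighborhood meets only finitely many pieces. The bookkeeping about removing the at most two endpoints of $I$ and invoking Baire (or equivalently, that a nonempty perfect Polish space is uncountable) is also fine; one small remark is that the relevant fact is that any interval $I\subseteq\mathbb{R}$ is Polish, so $E$ (closed in $I$) and $E'$ (open in $E$) are Polish as well, which is what makes the Baire step go through even when $I$ is not closed in $\mathbb{R}$.
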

\begin{lem}\label{que2}
Let $\theta\in T_1\Tilde{M}$ be a non-expansive point and $\mathcal{\tilde{I}}(\theta)$ be its non-trivial class. Then the boundary points of $\mathcal{\tilde{I}}(\theta)$ are accumulated by expansive points.
\end{lem}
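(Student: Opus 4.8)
The plan is to argue by contradiction: suppose a boundary point $\xi$ of the non-trivial class $\mathcal{\tilde{I}}(\theta)$ is \emph{not} accumulated by expansive points. Since $\xi$ is a boundary point, it lies on the embedded curve $\mathcal{\tilde{F}}^s(\theta)$ (and on $\mathcal{\tilde{F}}^u(\theta)$), so we can use the arc-length parametrization $\tilde{c}^s_{\theta}\colon\mathbb{R}\to\mathcal{\tilde{F}}^s(\theta)$ to work in a one-dimensional picture. By the failure of accumulation, there is a parameter interval $J\subset\mathbb{R}$, open on the side pointing away from $\mathcal{\tilde{I}}(\theta)$, with $\tilde{c}^s_{\theta}(J)$ entirely contained in the non-expansive set and containing $\xi$ as an endpoint. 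The key idea is that every point $\eta\in\tilde{c}^s_{\theta}(J)$ then has a non-trivial class $\mathcal{\tilde{I}}(\eta)$, and because $\eta\in\mathcal{\tilde{F}}^s(\theta)$, its stable horocycle is $\mathcal{\tilde{F}}^s(\eta)=\mathcal{\tilde{F}}^s(\theta)$; hence $\mathcal{\tilde{I}}(\eta)=\mathcal{\tilde{F}}^s(\theta)\cap\mathcal{\tilde{F}}^u(\eta)$ is a subarc of the \emph{same} curve $\mathcal{\tilde{F}}^s(\theta)$, and by Proposition \ref{tops}(1) it is a compact connected curve, i.e.\ a closed interval in the $\tilde{c}^s_{\theta}$-parametrization.

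Next I would show these classes partition the relevant piece of the curve. Using Lemma \ref{char1} (translated to $T_1\tilde{M}$) and Corollary \ref{key1}, being bi-asymptotic is an equivalence relation on geodesics, so the classes $\mathcal{\tilde{I}}(\eta)$ for $\eta$ ranging over $\tilde{c}^s_{\theta}(J)$ are pairwise either equal or disjoint; moreover each such class is a closed subinterval of $\mathbb{R}$ under the parametrization, of diameter at most $\tilde{Q}$ by Proposition \ref{tops}(2), and it is entirely contained in $J$ (its points are bi-asymptotic to $\gamma_\theta$, hence non-expansive, hence in $\tilde{c}^s_{\theta}(J)$ — here I must be a little careful near the endpoint $\xi$, but $\mathcal{\tilde{I}}(\theta)$ itself is one of these closed intervals and $\xi$ is its boundary point, so the union of all these classes covers a half-open interval $[\xi\text{-parameter},\,\cdot)$ or similar). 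Now Lemma \ref{int} gives the contradiction: a (half-open, hence homeomorphic to a) real interval cannot be written as a union of disjoint closed intervals. Each nontrivial $\mathcal{\tilde{I}}(\eta)$ is a genuine closed interval (more than a point, by non-expansiveness), they are disjoint, and they cover an interval — contradiction.

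I expect the main obstacle to be the bookkeeping at the boundary point $\xi$ itself: one must ensure that the union of classes $\mathcal{\tilde{I}}(\eta)$ over $\eta$ in the non-expansive arc abutting $\xi$ really fills out a full interval with no gaps, so that Lemma \ref{int} applies cleanly. This requires knowing that every point of $\tilde{c}^s_{\theta}(J)$ lies in \emph{some} non-trivial class (immediate from the no-accumulation hypothesis) and that distinct classes are disjoint closed intervals (from the equivalence-relation property plus Proposition \ref{tops}); the closedness is what makes Lemma \ref{int} bite. A secondary point to check is that $\mathcal{\tilde{I}}(\eta)\subset\mathcal{\tilde{F}}^s(\theta)$ for all such $\eta$ — this is where the invariance and the identity $\mathcal{\tilde{F}}^s(\eta)=\mathcal{\tilde{F}}^s(\theta)$ (from $\eta$ lying on $\theta$'s stable horocycle, Proposition \ref{h1}(3)) are used, reducing everything to one-dimensional topology on a single embedded curve.
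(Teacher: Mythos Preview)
Your proposal is correct and follows essentially the same approach as the paper: argue by contradiction, parametrize $\mathcal{\tilde{F}}^s(\theta)$ by arc-length so that $\mathcal{\tilde{I}}(\theta)=c([a,b])$, observe that a one-sided neighbourhood $(b,b+\delta)$ consisting entirely of non-expansive points is partitioned by equivalence classes which are closed subintervals, and invoke Lemma~\ref{int} to force this neighbourhood into a single class, contradicting that $b$ is a boundary point. Your write-up is more explicit about why each $\mathcal{\tilde{I}}(\eta)$ is a subarc of the same curve $\mathcal{\tilde{F}}^s(\theta)$ and why the classes are pairwise disjoint, but the argument is the same.
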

\begin{proof}
Let $c:\mathbb{R}\to \mathcal{\tilde{F}}^s(\theta)$ be an arc-length parametrization of $\mathcal{\tilde{F}}^s(\theta)$. Let $a,b\in \mathbb{R}$ with $a<b$ such that $\mathcal{\tilde{I}}(\theta)=c([a,b])$. Suppose that the boundary point $b$ is not accumulated by expansive points. Then, there exists $\delta>0$ such that $(b,b+\delta)$ has no expansive points. So, either $(b,b+\delta)$ is a single class or it is a disjoint union of distinct classes. Since classes are closed, Lemma \ref{int} shows that $[b,b+\delta]$ is a subset of a single class. So, $b$ is a common point of both classes hence $[a,b+\delta]$ is a single class, a contradiction to $b$ be a boundary point.
\end{proof}
Next, for each $\theta \in T_1\tilde{M}$, let us define a family of open neighborhoods $A_i$ of $\mathcal{\tilde{I}}(\theta)$ such that $\tilde{\chi}(A_i)$ are open neighborhoods of $\tilde{\chi}(\theta)$ as well. For every $\delta,\epsilon>0$, there exist $a,b\in \mathbb{R}$ and a map
\[  R: (a-\epsilon, b+\epsilon)\times (-\delta,\delta)\to T_1\tilde{M}, \qquad \text{satisfying the conditions: }\]
\begin{enumerate}
    \item Let $(0,s)\mapsto R(0,s)$ be the arc-length parametrization of a $\delta$-neighborhood of $\theta$ in $V(\theta)$ with respect to the Sasaki metric where $V(\theta)$ is the vertical subspace of $\theta$.
    \item For each $s_0\in (-\delta,\delta)$, let $(r,s_0)\mapsto R(r,s_0)$ be the arc-length parametrization of the continuous curve $R(\cdot,s_0)$ in $\mathcal{\tilde{F}}^s(R(0,s_0))$.
    \item $R([a,b],0)=\mathcal{\tilde{I}}(\theta)$, $R(0,0)=\theta$ and $(r,0)\mapsto R(r,0)$ be the arc-length parametrization of a $\epsilon$-neighborhood of $\mathcal{\tilde{I}}(\theta)$ in $\mathcal{\tilde{F}}^s(\theta)$.
\end{enumerate}
The image of $R$ is denoted by $\Sigma= \Sigma(\theta,\epsilon,\delta)= R((a-\epsilon, b+\epsilon)\times (-\delta,\delta))$. The continuity of the horospherical foliations ensures that $R$ is a homeomorphism and $\Sigma$ is a 2-dimensional section containing $\mathcal{\tilde{I}}(\theta)$. Note that $\Sigma$ is foliated by stable horospherical leaves of points in $V(\theta)$. Since these leaves are topologically transverse to the geodesic flow, $\Sigma$ is a cross section. For $\tau>0$, Brouwer's open mapping Theorem gives the following open 3-dimensional neighborhood of $\mathcal{\tilde{I}}(\theta)$:
\[  B=B(\theta,\epsilon,\delta,\tau)= \bigcup_{|t|<\tau}\tilde{\phi}_t(\Sigma). \]
We next define a projection map $Pr:B\to \Sigma$. For every $\eta\in B$, $Pr(\eta)$ is the projection of $\eta$ along the geodesic flow $\tilde{\phi}_t$. From the properties of $\tilde{\phi}_t$, we see that $Pr$ is a continuous surjection. For every $\eta\in \Sigma$, we define the stable (unstable) interval and their intersection
\[  W^s(\eta)=\mathcal{\tilde{F}}^s(\eta)\cap \Sigma, \quad W^u(\eta)=Pr(\mathcal{\tilde{F}}^u(\eta)\cap B) \quad \text{ and }\quad [\xi,\eta]=W^s(\xi)\cap W^u(\eta).\]
Since $\Sigma$ is foliated by the stable horospherical foliation $\mathcal{\Tilde{F}}^s$, we see that $W^s$ is exactly $\mathcal{\Tilde{F}}^s$ while $W^u$ is not the unstable horospherical foliation $\mathcal{\Tilde{F}}^u$ because the geodesic flow cannot have a local section that is foliated by both $\mathcal{\Tilde{F}}^s$ and $\mathcal{\Tilde{F}}^u$.

To build a new cross section let us choose four expansive points $\theta_1, \theta_2, \eta_1,\eta_2\in\Sigma$. Since $\mathcal{\tilde{I}}(\theta)$ is non-trivial, Lemma \ref{que2} says that for $\epsilon>0$ there exists $c\in (a-\epsilon,a), d\in (b,b+\epsilon)$ such that $\theta_1=R(c,0)$ and $\theta_2=R(d,0)$ are expansive points of $W^s(\theta)$. This also implies that
\begin{equation}\label{inclu}
 \mathcal{\tilde{I}}(\theta)=R([a,b],0)\subset R([c,d],0)   
\end{equation}
We define the upper and lower region of $\Sigma$ by
\[\Sigma_+=\{ R(r,s): r\in (a-\epsilon,b+\epsilon), s>0\} \text{ and } \Sigma_-=\{ R(r,s): r\in (a-\epsilon,b+\epsilon), s<0\}.\]
Pick some expansive points $\eta_1\in W^u(\theta_1)\cap\Sigma_+$ and $\eta_2\in W^u(\theta_2)\cap\Sigma_-$. The new cross section $U= U(\theta,\epsilon, \delta,\theta_1, \theta_2, \eta_1,\eta_2)\subset \Sigma$ is the open 2-dimensional region in $\Sigma$ bounded by $W^u(\theta_1)$, $W^u(\theta_2)$, $W^s(\eta_1)$ and $W^s(\eta_2)$. 
\begin{lem}
The open cross section $U\in \Sigma$ is a saturated set containing $\mathcal{\tilde{I}}(\theta)$.
\end{lem}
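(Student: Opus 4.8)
The plan is to prove the two assertions of the lemma separately: that $\mathcal{\tilde{I}}(\theta)\subset U$, and that $U$ is saturated, i.e. $\tilde{\chi}^{-1}(\tilde{\chi}(U))=U$. For the inclusion I would argue directly from the positions of the four boundary curves inside $\Sigma$. By construction $\theta_{1}=R(c,0)$ and $\theta_{2}=R(d,0)$ with $a-\epsilon<c<a\le b<d<b+\epsilon$, so $W^{u}(\theta_{1})$ and $W^{u}(\theta_{2})$ meet the central slice $\{s=0\}$ of $\Sigma$ at $\theta_{1}$ and $\theta_{2}$; together with $c<a$ and $b<d$, \eqref{inclu} gives $\mathcal{\tilde{I}}(\theta)=R([a,b],0)\subset R((c,d),0)$, so $\mathcal{\tilde{I}}(\theta)$ lies on $\{s=0\}$ strictly between $W^{u}(\theta_{1})$ and $W^{u}(\theta_{2})$. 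Since $\eta_{1}\in\Sigma_{+}$ and $\eta_{2}\in\Sigma_{-}$, the horocyclic slices $W^{s}(\eta_{1})$ and $W^{s}(\eta_{2})$ sit at heights $s_{1}>0>s_{2}$, so $\{s=0\}$ lies strictly between them; hence $\mathcal{\tilde{I}}(\theta)$ is contained in the open region $U$.

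For the saturation, note that $\tilde{\chi}(\zeta)=\tilde{\chi}(\eta)$ holds exactly when $\zeta\in\mathcal{\tilde{I}}(\eta)$, so $\tilde{\chi}^{-1}(\tilde{\chi}(U))=\bigcup_{\eta\in U}\mathcal{\tilde{I}}(\eta)$; since $\eta\in\mathcal{\tilde{I}}(\eta)$ always, it suffices to prove $\mathcal{\tilde{I}}(\eta)\subset U$ for every $\eta\in U$. Fix $\eta=R(r_{0},s_{0})\in U$, so $s_{0}\in(s_{2},s_{1})$. By Proposition \ref{tops} the class $\mathcal{\tilde{I}}(\eta)$ is a compact connected curve contained in $\mathcal{\tilde{F}}^{s}(\eta)=\mathcal{\tilde{F}}^{s}(R(0,s_{0}))$. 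The vectors $R(0,s)$, $s\in(-\delta,\delta)$, are distinct unit vectors over the common foot point $P(\theta)$, and hence lie on pairwise distinct stable horocycles — two points of one stable horocycle with the same foot point would have equal Busemann gradient and so would coincide. Consequently $\mathcal{\tilde{F}}^{s}(\eta)$, $\mathcal{\tilde{F}}^{s}(\eta_{1})=\mathcal{\tilde{F}}^{s}(R(0,s_{1}))$ and $\mathcal{\tilde{F}}^{s}(\eta_{2})=\mathcal{\tilde{F}}^{s}(R(0,s_{2}))$ are pairwise disjoint leaves of the foliation $\mathcal{\tilde{F}}^{s}$ (Proposition \ref{h1}), so $\mathcal{\tilde{I}}(\eta)\cap\big(W^{s}(\eta_{1})\cup W^{s}(\eta_{2})\big)=\emptyset$.

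The key step is to show that $\mathcal{\tilde{I}}(\eta)$ also avoids $W^{u}(\theta_{1})$ and $W^{u}(\theta_{2})$, which I would do by contradiction. Suppose $\zeta\in\mathcal{\tilde{I}}(\eta)\cap W^{u}(\theta_{j})$ for some $j$. By definition of $W^{u}$, $\zeta=Pr(\omega)$ for some $\omega\in\mathcal{\tilde{F}}^{u}(\theta_{j})\cap B$, hence $\omega=\tilde{\phi}_{t}(\zeta)$ for a small $t$. Since $\zeta$ and $\eta$ belong to the same class we have $\mathcal{\tilde{F}}^{u}(\zeta)=\mathcal{\tilde{F}}^{u}(\eta)$, and invariance of the unstable horocyclic foliation under the geodesic flow (Proposition \ref{h1}) yields
\[ \mathcal{\tilde{F}}^{u}(\theta_{j})=\mathcal{\tilde{F}}^{u}(\omega)=\mathcal{\tilde{F}}^{u}\big(\tilde{\phi}_{t}(\zeta)\big)=\tilde{\phi}_{t}\big(\mathcal{\tilde{F}}^{u}(\zeta)\big)=\tilde{\phi}_{t}\big(\mathcal{\tilde{F}}^{u}(\eta)\big)=\mathcal{\tilde{F}}^{u}\big(\tilde{\phi}_{t}(\eta)\big). \]
Thus $\tilde{\phi}_{t}(\eta)\in\mathcal{\tilde{F}}^{u}(\theta_{j})\cap B$ and $Pr(\tilde{\phi}_{t}(\eta))=\eta$, so $\eta\in W^{u}(\theta_{j})$ — impossible, since $\eta$ lies in the open region $U$, which is disjoint from its bounding curve $W^{u}(\theta_{j})$. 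Hence $\mathcal{\tilde{I}}(\eta)$ is disjoint from all four boundary curves of $U$.

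Finally, I would upgrade this to $\mathcal{\tilde{I}}(\eta)\subset U$ with one-dimensional topology: $\mathcal{\tilde{F}}^{s}(\eta)$ is an embedded line (Proposition \ref{h1}) and $\mathcal{\tilde{I}}(\eta)$ a connected sub-arc of it through $\eta$; inside $\Sigma$ the curves $W^{u}(\theta_{1})$ and $W^{u}(\theta_{2})$ each cross $\mathcal{\tilde{F}}^{s}(\eta)$ in a single point, and these two points separate $\eta$ from the two ends of the slice $W^{s}(\eta)=\mathcal{\tilde{F}}^{s}(\eta)\cap\Sigma$. Being unable (by the previous paragraph) to reach those separating points, the sub-arc $\mathcal{\tilde{I}}(\eta)$ cannot escape the portion of $\mathcal{\tilde{F}}^{s}(\eta)$ trapped between $W^{u}(\theta_{1})$ and $W^{u}(\theta_{2})$, which is precisely $U\cap\mathcal{\tilde{F}}^{s}(\eta)$; in particular it never reaches the ends of $W^{s}(\eta)$, hence never leaves $\Sigma$, and therefore $\mathcal{\tilde{I}}(\eta)\subset U$. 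I expect the main technical obstacle to be this last step — justifying rigorously, from the construction of $\Sigma$ as a curvilinear rectangle, from continuity of the horocyclic foliations, and (for the choice of $\theta_{1},\theta_{2}$) from Lemma \ref{que2}, that each $W^{u}(\theta_{j})$ meets $\mathcal{\tilde{F}}^{s}(\eta)$ in exactly one point and that this point separates $\eta$ from the relevant end of the slice. The rest of the argument follows cleanly from the foliation and flow-invariance properties recorded in Propositions \ref{h1} and \ref{tops}.
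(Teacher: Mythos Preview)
Your proof is correct and follows essentially the same contradiction strategy as the paper: if $\mathcal{\tilde{I}}(\eta)$ escaped $U$ it would meet $\partial U$ at some $\xi$, and then the symmetry $\eta\in\mathcal{\tilde{I}}(\xi)\subset W^s(\xi)\cap W^u(\xi)$ forces $\eta$ itself onto $\partial U$, contradicting $\eta\in U$. The paper's version is terser (three sentences), leaving implicit both the flow-invariance computation you carry out for the $W^u$ boundary pieces and the one-dimensional separation argument you flag at the end.
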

\begin{proof}
From relation \eqref{inclu} we see that $\mathcal{\tilde{I}}(\theta)$ is surrounded by $\theta_1,\theta_2$ in $W^s(\theta)$ hence it is included in $U$. Now, suppose by contradiction that there exists a non-expansive $\eta \in U$ such that $\mathcal{\tilde{I}}(\eta)$ is not included in $U$. This implies that $\mathcal{\tilde{I}}(\eta)$ intersects the boundary of $U$ at some $\xi$. Since $\eta\in \mathcal{\tilde{I}}(\xi)$ it follows that $\eta\in W^s(\xi)\cap W^u(\xi)$. Thus $\eta$ belongs to the boundary of $U$, a contradiction.
\end{proof}
As above, for $\tau>0$ Brouwer's open mapping Theorem says that
\[  A=A(\theta,\epsilon, \delta,\tau, \theta_1, \theta_2, \eta_1,\eta_2)= \bigcup_{|t|<\tau}\tilde{\phi}_t(U) \]
is an open 3-dimensional neighborhood of $\mathcal{\tilde{I}}(\theta)$. Since $U$ is saturated so is $A$.
%The family of open neihborhoods $A$ induce a basis for the quotient topology of X. We first note that epsilon delta tau are the UNIQUE parameters that determine the size of A. 
Thus, for every $\theta\in T_1\tilde{M}$, we have built a family 
\[ \{ A(\theta,\epsilon, \delta,\tau, \theta_1, \theta_2, \eta_1,\eta_2): \epsilon, \delta,\tau>0, \theta_1, \theta_2\in W^s(\theta), \eta_1\in W^u(\theta_1),\eta_2\in W^u(\theta_2) \} \]
of saturated neighborhoods of $\mathcal{\tilde{I}}(\theta)$. Consider the family of quotients of sets
\[   \{ \tilde{\chi}( A(\theta,\epsilon, \delta,\tau, \theta_1, \theta_2, \eta_1,\eta_2)): \epsilon, \delta,\tau>0, \theta_1, \theta_2\in W^s(\theta), \eta_1\in W^u(\theta_1),\eta_2\in W^u(\theta_2) \}. \]
\begin{lem}\label{que1}
For every $\theta\in T_1\tilde{M}$, the family
\[   \mathcal{A}_{\theta}= \{ \tilde{\chi}( A(\theta,\epsilon_l, \delta_m,\tau_n)): \epsilon_l=1/l,\delta_m=1/m,\tau_n=1/n \text{ with } l,m,n\in \mathbb{N} \} \]
is a countable basis of neighborhoods of $[\theta]\in \tilde{X}$. Hence $\tilde{X}$ is first countable and $\{ \mathcal{A}_{\theta}: \theta\in T_1\tilde{M} \}$ is a basis for the quotient topology of $\tilde{X}$. 
\end{lem}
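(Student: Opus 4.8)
The plan is to verify the three defining properties of a countable neighbourhood basis at $[\theta]$: (a) each $\tilde{\chi}(A(\theta,\epsilon_l,\delta_m,\tau_n))$ is an open neighbourhood of $[\theta]$; (b) given any open $W\ni[\theta]$ in $\tilde{X}$, some member of $\mathcal{A}_\theta$ is contained in $W$; (c) the family is countable, which is immediate since it is indexed by $(l,m,n)\in\mathbb{N}^3$. Property (a) is essentially already in place: each $A=A(\theta,\epsilon,\delta,\tau,\theta_1,\theta_2,\eta_1,\eta_2)$ is open and saturated in $T_1\tilde M$ and contains $\mathcal{\tilde I}(\theta)=\tilde\chi^{-1}([\theta])$, so by the analogue of the earlier lemma on saturated sets (openness of $\tilde\chi(U)$ for $U$ open saturated), $\tilde\chi(A)$ is open in $\tilde X$ and contains $[\theta]$. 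To reduce the four-parameter family to the three-parameter family $\mathcal{A}_\theta$, I would fix, for each pair $(\epsilon_l,\delta_m)$, a canonical choice of the four expansive boundary points: take $\theta_1=R(c,0),\theta_2=R(d,0)$ with $c,d$ produced by Lemma \ref{que2} (expansive points in $W^s(\theta)$ bracketing $\mathcal{\tilde I}(\theta)$ inside $(a-\epsilon_l,a)$ and $(b,b+\epsilon_l)$), and then $\eta_1\in W^u(\theta_1)\cap\Sigma_+$, $\eta_2\in W^u(\theta_2)\cap\Sigma_-$ expansive and close to $\theta_1,\theta_2$; this is possible because $\mathcal{R}_0$ is dense along the horocycle arcs by Lemma \ref{que2} together with minimality-type density. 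I should note that different legitimate choices yield cofinal subfamilies, so the basis property does not depend on the choice.

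The substantive step is (b): shrinking. Let $W$ be open in $\tilde X$ with $[\theta]\in W$; then $\tilde\chi^{-1}(W)$ is an open saturated neighbourhood of the compact curve $\mathcal{\tilde I}(\theta)$ in $T_1\tilde M$ (compactness by Proposition \ref{tops}). The goal is to find $l,m,n$ with $A(\theta,\epsilon_l,\delta_m,\tau_n)\subset\tilde\chi^{-1}(W)$, since then $\tilde\chi(A(\theta,\epsilon_l,\delta_m,\tau_n))\subset W$. First I would choose $\tau_n$ small enough that $\bigcup_{|t|<\tau_n}\tilde\phi_t(\mathcal{\tilde I}(\theta))$ lies in $\tilde\chi^{-1}(W)$; this uses continuity of $\tilde\phi$ and compactness of $\mathcal{\tilde I}(\theta)$. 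Then, working in the cross-section $\Sigma=\Sigma(\theta,\epsilon,\delta)$ for some fixed small $\epsilon,\delta$, I would use that $\mathcal{\tilde I}(\theta)=R([a,b],0)$ is compact and contained in the open set $\Sigma\cap\tilde\phi_{(-\tau_n,\tau_n)}^{-1}$-preimage of $\tilde\chi^{-1}(W)$, together with the fact that the sets $U(\theta,\epsilon_l,\delta_m,\theta_1,\theta_2,\eta_1,\eta_2)$ form a neighbourhood basis of $\mathcal{\tilde I}(\theta)$ inside $\Sigma$ as $\epsilon_l,\delta_m\to 0$ and the bracketing expansive points converge to the endpoints of $\mathcal{\tilde I}(\theta)$. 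Concretely: as $l\to\infty$ the arcs $W^u(\theta_1),W^u(\theta_2)$ (images under $R$ of $\{c\}\times(-\delta_m,\delta_m)$-type curves, projected) shrink toward the "vertical ends" over the endpoints of $\mathcal{\tilde I}(\theta)$, and as $m\to\infty$ the transverse width $\delta_m$ shrinks, so $U$ shrinks to $\mathcal{\tilde I}(\theta)$; a standard compactness argument (a nested intersection of the closures $\overline{U}$ over a decreasing sequence equals $\mathcal{\tilde I}(\theta)$, hence eventually $\overline{U}\subset$ the given open set) yields the inclusion. Finally $A=\bigcup_{|t|<\tau_n}\tilde\phi_t(U)\subset\tilde\chi^{-1}(W)$ by the choice of $\tau_n$ and of $U$.

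The first-countability of $\tilde X$ and the statement that $\bigcup_\theta\mathcal{A}_\theta$ is a basis for the quotient topology then follow formally: every point of $\tilde X$ is $[\theta]$ for some $\theta\in T_1\tilde M$, each has the countable neighbourhood basis $\mathcal{A}_\theta$, and a space in which every point has a neighbourhood basis drawn from a fixed family has that family as a topological basis.

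I expect the main obstacle to be the shrinking argument in (b): one must show that the two-parameter family $\{U(\theta,\epsilon_l,\delta_m,\ldots)\}$, with the auxiliary expansive bracketing points chosen canonically, genuinely forms a neighbourhood basis of the curve $\mathcal{\tilde I}(\theta)$ within the section $\Sigma$, i.e. that no "gap" between the curve and the region boundary persists as $l,m\to\infty$. This is where Lemma \ref{que2} is essential — it guarantees the bracketing expansive points $\theta_1,\theta_2$ (and then $\eta_1,\eta_2$) can be taken arbitrarily close to the endpoints of $\mathcal{\tilde I}(\theta)$, so that $\bigcap_{l,m}\overline{U}=\mathcal{\tilde I}(\theta)$ — combined with the fact (from Proposition \ref{tops} and Proposition \ref{h1}) that $\mathcal{\tilde I}(\theta)$ is a compact embedded arc with exactly two boundary points, so that "surrounding" it in the horocycle direction by two nearby expansive points really does trap it.
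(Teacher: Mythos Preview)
Your proposal is correct and follows essentially the same route as the paper: show that each $\tilde\chi(A)$ is open (via saturation), then show the shrinking property by pulling back an open neighbourhood of $[\theta]$ to an open neighbourhood of the compact arc $\mathcal{\tilde I}(\theta)$ and fitting some $A(\theta,\epsilon,\delta,\tau,\ldots)$ inside it, and finally observe that the auxiliary choices $\theta_1,\theta_2,\eta_1,\eta_2$ do not affect cofinality so the countable parameters $\epsilon_l,\delta_m,\tau_n$ suffice. The paper's own proof is considerably terser on the shrinking step---it simply asserts that for $\epsilon,\delta,\tau$ small enough every neighbourhood of $\mathcal{\tilde I}(\theta)$ contains some $A$---whereas you spell out the compactness argument and the role of Lemma~\ref{que2} in bringing the bracketing expansive points arbitrarily close to the endpoints; this extra detail is correct and welcome, but the underlying strategy is identical.
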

\begin{proof}
For every $\theta\in T_1\tilde{M}$, we know that $A=A(\theta,\epsilon, \delta,\tau, \theta_1, \theta_2, \eta_1,\eta_2)$ is an open neighborhood of $\mathcal{\tilde{I}}(\theta)$. Since $A$ is saturated it follows that $\tilde{\chi}^{-1}\circ \tilde{\chi}(A)=A$. We see that $\tilde{\chi}(A)\subset \tilde{X}$ is an open set containing $[\theta]$ and $\{ \tilde{\chi}( A(\theta,\epsilon, \delta,\tau, \theta_1, \theta_2, \eta_1,\eta_2))\}$ is a family of open neighborhoods of $[\theta]\in X$. 

Choosing $\epsilon, \delta, \tau>0$ small enough, every neighborhood $V$ of $\mathcal{\tilde{I}}(\theta)$ contains some $A(\theta,\epsilon, \delta,\tau, \theta_1, \theta_2, \eta_1,\eta_2)$. Given an open set $U\subset X$ containing $[\theta]$, $\tilde{\chi}^{-1}(U)$ is an open neighborhood of $\mathcal{\tilde{I}}(\theta)$. So, there exists $A(\theta,\epsilon, \delta,\tau, \theta_1, \theta_2, \eta_1,\eta_2)\subset \tilde{\chi}^{-1}(U)$ and hence $\tilde{\chi}(A(\theta,\epsilon, \delta,\tau, \theta_1, \theta_2, \eta_1,\eta_2))\subset U$. Therefore the collection $\{ \tilde{\chi}( A(\theta,\epsilon, \delta,\tau, \theta_1, \theta_2, \eta_1,\eta_2)) \}$ is a basis of neighborhoods of $[\theta]\in X$. 

This property is not affected by specific choices of parameters $\theta_1,\theta_2,\eta_1,\eta_2\in \Sigma$ but by the parameters $\epsilon,\delta,\tau>0$. Choosing $\epsilon_l=1/l,\delta_m=1/m ,\tau_n=1/n$ with $l,m.n\in \mathbb{N}$ we still have $\mathcal{A}_{\theta}=\{\tilde{\chi}( A(\theta,\epsilon_l, \delta_m,\tau_n)): l,m,n\in \mathbb{N} \}$ is a basis of neighborhoods of $[\theta]$. So, $\mathcal{A}_{\theta}$ is a countable basis of neighborhoods of $[\theta]$. 
\end{proof} 
This basis is important because it provides an explicit description of basic open sets of the quotient topology. So far, we have a family of neighborhoods for every $\mathcal{\tilde{I}}(\theta)\subset T_1\tilde{M}$ and a basis of neighborhoods of $[\theta]\in \tilde{X}$. From Equation \eqref{cove} in Section \ref{s1}, we recall that $d\pi:T_1\Tilde{M}\to T_1M$ and $\Pi:\Tilde{X}\to X$ are covering maps. Projecting the above families of open neighborhoods by $\Pi$ and $d\pi$ respectively, we get corresponding families of open neighborhoods for every $\mathcal{I}(\theta)\subset T_1M$ and every $[\theta]\in X$. Thus, $X$ is first countable and $\{ \Pi(\mathcal{A}_{\theta}): \theta\in T_1\tilde{M} \}$ is a basis for the quotient topology of $X$. 

To show the metrizability of $X$ we recall a basic topological result \cite{will12}.
\begin{pro}\label{metri}
If $f:X\to Y$ is a continuous surjection from a compact metric space onto a Hausdorff space, then $Y$ is metrizable.
\end{pro}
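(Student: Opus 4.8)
The plan is to reduce the statement to the classical fact that a compact Hausdorff space whose algebra of continuous functions (with the supremum norm) is separable is metrizable, and to transport separability from $C(X)$ to $C(Y)$ by pulling back along $f$. First I would record the easy structural facts: since $X$ is compact and $f$ is a continuous surjection, $Y=f(X)$ is compact; together with the standing hypothesis that $Y$ is Hausdorff, this makes $Y$ a compact Hausdorff space, hence normal, so that Urysohn's lemma is available on $Y$.

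Next, consider the pullback operator
\[ f^{*}\colon C(Y)\to C(X),\qquad f^{*}(g)=g\circ f, \]
which is linear, and which is \emph{isometric} precisely because $f$ is surjective: $\|g\circ f\|_{\infty}=\sup_{x\in X}|g(f(x))|=\sup_{y\in Y}|g(y)|=\|g\|_{\infty}$. Thus $C(Y)$ is isometric to a linear subspace of $C(X)$. Since $X$ is a compact metric space it is second countable, and a standard Stone--Weierstrass argument — for instance using the countably many functions $x\mapsto d(x,x_{k})$ attached to a countable dense set $\{x_{k}\}\subset X$, together with rational polynomial combinations — exhibits a countable dense subset of $C(X)$. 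A subset of a separable metric space is separable, so $C(Y)$ is separable; fix a countable dense set $\{g_{n}\}_{n\in\mathbb{N}}\subset C(Y)$, which after rescaling we may assume satisfies $\|g_{n}\|_{\infty}\le 1$.

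Finally I would produce an embedding of $Y$ into a metrizable space. The family $\{g_{n}\}$ separates points of $Y$: given $y_{1}\neq y_{2}$, normality and Urysohn's lemma give some $g\in C(Y)$ with $g(y_{1})\neq g(y_{2})$, and any $g_{n}$ close enough to $g$ in the supremum norm still separates $y_{1}$ from $y_{2}$. Hence the map
\[ \Phi\colon Y\to[-1,1]^{\mathbb{N}},\qquad \Phi(y)=\bigl(g_{n}(y)\bigr)_{n\in\mathbb{N}}, \]
is continuous and injective; as a continuous injection from a compact space into a Hausdorff space it is a homeomorphism onto its image, and that image is a subspace of the metrizable cube $[-1,1]^{\mathbb{N}}$. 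Therefore $Y$ is metrizable.

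The only genuinely delicate point I expect is the separability of $C(X)$ for compact metric $X$ — and, more conceptually, the observation that one cannot simply push a countable basis of $X$ forward, since second countability need not survive the passage to the (merely) continuous surjective image $Y$. This is exactly why the argument detours through the function space $C(Y)$, where the isometry $f^{*}$ does the work; once that is in place, compactness of $Y$, the isometry property of $f^{*}$, and the embedding by a point-separating family are all routine.
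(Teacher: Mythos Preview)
The paper does not actually prove this proposition: it is stated without proof and attributed to Willard's \emph{General Topology} as ``a basic topological result.'' So there is no argument in the paper to compare against.

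Your argument is correct. You correctly observe that $Y$ is compact Hausdorff, that the pullback $f^{*}\colon C(Y)\to C(X)$ is an isometry because $f$ is onto, that $C(X)$ is separable for compact metric $X$, and hence that $C(Y)$ is separable; you then use Urysohn's lemma to see that a countable dense family in $C(Y)$ separates points and embed $Y$ in the metrizable cube $[-1,1]^{\mathbb N}$. Each step is standard and sound. One small cosmetic point: you could shortcut the last paragraph by quoting directly the equivalence ``a compact Hausdorff space is metrizable iff $C(Y)$ is separable,'' but the explicit embedding you give is of course a proof of one direction of that equivalence.
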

\begin{lem}
Let $M$ be a compact surface without conjugate points and genus greater than one. Then, the quotient space $X$ is a compact metrizable space.
\end{lem}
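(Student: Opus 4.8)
The plan is to deduce metrizability from Proposition \ref{metri} applied to the quotient map $\chi\colon T_1M\to X$. Since $T_1M$ is a compact metric space (for the Sasaki distance $d_s$), $\chi$ is a continuous surjection, and $X$ has already been shown to be compact, the whole statement reduces to checking that $X$ is Hausdorff: once that is done, Proposition \ref{metri} gives that $X$ is metrizable, and together with compactness this yields that $X$ is a compact metrizable space.

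So the core of the argument is to separate two distinct points $[\theta]\neq[\eta]$ of $X$ by disjoint open sets. First I would pick representatives $\theta,\eta\in T_1M$ and observe that the fibres $\chi^{-1}[\theta]=\mathcal{I}(\theta)$ and $\chi^{-1}[\eta]=\mathcal{I}(\eta)$ (Lemma \ref{char1}) are disjoint, since a common point would force $[\theta]=[\eta]$, and that, by Proposition \ref{tops}, they are compact curves in $T_1M$; hence $\rho:=d_s(\mathcal{I}(\theta),\mathcal{I}(\eta))>0$. Next I would use the saturated neighbourhood bases constructed in Section \ref{s3}: projecting the families $\mathcal{A}_{\tilde{\theta}}$ and $\mathcal{A}_{\tilde{\eta}}$ by the covering maps $\Pi$ and $d\pi$ produces, for parameters $\epsilon,\delta,\tau>0$, saturated open sets $A_\theta,A_\eta\subset T_1M$ with $\mathcal{I}(\theta)\subset A_\theta$ and $\mathcal{I}(\eta)\subset A_\eta$. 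Because these neighbourhoods are built as perturbations of the class along the vertical, stable-horocyclic and flow directions, they shrink down to $\mathcal{I}(\theta)$ (resp. $\mathcal{I}(\eta)$) as $\epsilon,\delta,\tau\to 0$; so I would choose the parameters small enough that $A_\theta$ and $A_\eta$ lie inside the $d_s$-balls of radius $\rho/3$ about $\mathcal{I}(\theta)$ and $\mathcal{I}(\eta)$ respectively, which makes $A_\theta\cap A_\eta=\emptyset$. Finally, since $\chi$ sends open saturated sets to open sets, $\chi(A_\theta)$ and $\chi(A_\eta)$ are open neighbourhoods of $[\theta]$ and $[\eta]$; they are disjoint, for if $[\xi]$ lay in both then saturation would give $\mathcal{I}(\xi)=\chi^{-1}[\xi]\subset A_\theta\cap A_\eta=\emptyset$, which is impossible. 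Hence $X$ is Hausdorff.

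The step I expect to be the main obstacle is the claim that saturated open neighbourhoods of a class can be taken arbitrarily small: abstractly, images of fibres under non-injective quotient maps need not shrink, so one genuinely has to lean on the explicit construction of Section \ref{s3}. There one has $\bigcap_{\epsilon,\delta,\tau}A(\theta,\epsilon,\delta,\tau,\dots)=\mathcal{\tilde{I}}(\theta)$ in $T_1\tilde{M}$, and the remaining care is to check that pushing these neighbourhoods down through the covering map $d\pi$ preserves both saturation and this smallness; this holds as soon as $\epsilon,\delta,\tau$ are small compared with the injectivity radius of $T_1M$ and with the diameter bound $\tilde{Q}$ of Proposition \ref{tops}, so that $d\pi$ restricts to a homeomorphism on the relevant neighbourhood. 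Everything else — openness of $\chi(A_\theta)$ and $\chi(A_\eta)$, their disjointness, and the final appeal to Proposition \ref{metri} — is routine.
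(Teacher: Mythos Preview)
Your proposal is correct and follows essentially the same route as the paper: reduce to showing $X$ is Hausdorff via the saturated neighbourhood bases of Lemma~\ref{que1}, then apply Proposition~\ref{metri}. The only cosmetic difference is that you separate the disjoint compact fibres by a single metric argument (shrinking all parameters at once), whereas the paper splits into two cases according to whether $\mathcal{F}^s(\theta)=\mathcal{F}^s(\eta)$ and in each case shrinks only the relevant parameter ($\delta$ or $\epsilon$).
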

\begin{proof}
Since $\chi$ is a continuous surjection, $X$ is compact. We next show that $X$ is Hausdorff. Choose two distinct points $[\theta],[\eta]\in X$ and suppose that $\mathcal{F}^s(\theta)\cap \mathcal{F}^s(\eta)=\emptyset$. Choosing $\delta$ small enough in Lemma \ref{que1}, we can build disjoint basic open sets because $\mathcal{F}^s$ is a foliation of $T_1M$. We now suppose that $\mathcal{F}^s(\theta)\cap \mathcal{F}^s(\eta)\neq\emptyset$ hence $\mathcal{F}^s(\theta)= \mathcal{F}^s(\eta)$. Choosing $\epsilon$ small enough in Lemma \ref{que1}, the basic open sets of $\theta$ and $\eta$ are disjoint because $\mathcal{F}^u$ is a foliation of $T_1M$. The result follows from the application of Theorem \ref{metri} to our case.
\end{proof}
So, we choose some distance on $X$ that is compatible with the quotient topology. We denote it by $d$ and called it the quotient distance. 

\section{Topological dimension of the quotient space}
This section is devoted to show that the topological dimension of the quotient space is at least two. We first define the topological dimension \cite{munk00}. Let $X$ be a topological space and $\mathcal{U}$ be an open cover of $X$. The order of $\mathcal{U}$ is the smallest $n\in \mathbb{N}$ such that every $x\in X$ belongs to at most $n$ sets of $\mathcal{U}$. An open refinement of $\mathcal{U}$ is another open cover, each of whose sets is a subset of a set in $\mathcal{U}$. The topological dimension of $X$ is the minimum $n$ such that every $\mathcal{U}$ has an open refinement of order $n+1$ or less. We have as standard examples the open sets of $\mathbb{R}^n$. For every open set $U\subset \mathbb{R}^n$, the topological dimension of $U$ is $n$.
\begin{teo}[\cite{will12}]\label{will}
Let $f:X\to Y$ be a homeomorphism between topological spaces. Then, the topological dimension of $X$ and $Y$ are equal. 
\end{teo}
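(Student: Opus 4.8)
The plan is to show that the Lebesgue covering dimension used here is invariant under homeomorphism by transporting open covers, their refinements, and their orders along $f$ and $f^{-1}$. Since the claim is symmetric in $X$ and $Y$, it is enough to prove that the dimension of $Y$ is at most the dimension of $X$; applying the same argument to the homeomorphism $f^{-1}\colon Y\to X$ then yields the reverse inequality, hence equality. If the dimension of $X$ is infinite there is nothing to prove, so assume it equals $n<\infty$, meaning every open cover of $X$ admits an open refinement of order at most $n+1$.

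First I would record the elementary transport properties of the homeomorphism $f\colon X\to Y$. If $\mathcal V$ is an open cover of $Y$, then $\{f^{-1}(V):V\in\mathcal V\}$ is an open cover of $X$, because $f$ is continuous and surjective. Conversely, if $\mathcal W$ is a family of open subsets of $X$ that covers $X$, then $\{f(W):W\in\mathcal W\}$ is an open cover of $Y$, because $f$ is an open map and surjective. Crucially, since $f$ is a bijection, for any subfamily $\{W_i\}_{i\in I}$ one has $y\in\bigcap_{i\in I}f(W_i)$ if and only if $f^{-1}(y)\in\bigcap_{i\in I}W_i$; hence the order of $\{f(W):W\in\mathcal W\}$ equals the order of $\mathcal W$, and likewise the order of $\{f^{-1}(V):V\in\mathcal V\}$ equals the order of $\mathcal V$. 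Finally, if $\mathcal W$ refines $\{f^{-1}(V):V\in\mathcal V\}$, then $\{f(W):W\in\mathcal W\}$ refines $\mathcal V$, since $W\subset f^{-1}(V)$ implies $f(W)\subset f(f^{-1}(V))=V$ by surjectivity.

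Then I would assemble these observations. Given an arbitrary open cover $\mathcal V$ of $Y$, form the open cover $\{f^{-1}(V):V\in\mathcal V\}$ of $X$, take an open refinement $\mathcal W$ of it of order at most $n+1$ (which exists by the definition of the dimension of $X$), and push it forward to the open cover $\{f(W):W\in\mathcal W\}$ of $Y$: by the transport properties this is an open refinement of $\mathcal V$ of order at most $n+1$. Therefore the dimension of $Y$ is at most $n$, and symmetry finishes the proof. The argument is purely formal; the only point requiring care is tracking that a homeomorphism is simultaneously continuous, open, and bijective, so that covers, refinements, and orders all pass back and forth unchanged — which is precisely why the statement is merely cited from \cite{will12} rather than proved in detail here.
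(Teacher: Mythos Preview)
Your argument is correct and is the standard proof that Lebesgue covering dimension is a topological invariant: you transport covers, refinements, and orders along the bijection $f$, using continuity for the pullback and openness for the pushforward, and bijectivity to ensure orders are preserved. The paper itself does not supply a proof of this theorem at all --- it simply quotes the result from Willard \cite{will12} --- so there is no ``paper's approach'' to compare against; your proof is exactly the elementary verification one would expect, and indeed you correctly note in your final sentence that this is why the statement is merely cited.
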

Let $X$ be a topological space, $f:X\to \mathbb{R}^2$ be a continuous map and $U\subset X$ be an open set. We say that $U$ is a topological surface if the restriction of $f$ to $U$ is a homeomorphism. Theorem \ref{will} implies that every topological surface has topological dimension 2. Let $\tilde{X}$ and $X$ be the quotient spaces defined in Section \ref{s1}. To estimate their topological dimensions we find a topological surface passing through every point. 
\begin{lem}
Let $M$ be a compact surface without conjugate points and genus greater than one. Then, for every $[\theta]\in \tilde{X}$ there exists a topological surface $S_{[\theta]}$ containing $[\theta]$. In particular, $\tilde{X}$ and $X$ have topological dimension at least two. 
\end{lem}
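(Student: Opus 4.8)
The plan is to construct, for each $[\theta]\in\tilde{X}$, an explicit topological surface through $[\theta]$ by projecting a suitable cross section from $T_1\tilde{M}$ via $\tilde{\chi}$, and then showing that this projection is a homeomorphism onto its image. The natural candidate is the cross section $U=U(\theta,\epsilon,\delta,\theta_1,\theta_2,\eta_1,\eta_2)\subset\Sigma$ built in Section \ref{s3}: it is a saturated, open, $2$-dimensional region foliated by the stable horospherical leaves $W^s$ and transverse arcs $W^u$, and it contains the whole class $\mathcal{\tilde{I}}(\theta)$. Since $U$ is saturated, $\tilde{\chi}^{-1}(\tilde{\chi}(U))=U$, so $\tilde{\chi}(U)$ is open in $\tilde{X}$ and $\tilde{\chi}|_U:U\to\tilde{\chi}(U)$ is a continuous open surjection. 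The point is that the only nontrivial fibers of $\tilde{\chi}$ lying inside $U$ are the classes $\mathcal{\tilde{I}}(\eta)$ with $\eta\in U$, each of which is a compact connected subarc of a single stable leaf $W^s(\eta)$ (Proposition \ref{tops}). So on $U$ the quotient $\tilde{\chi}$ collapses, within each stable leaf $W^s(\eta)$, a single compact subinterval to a point, and is injective in the transverse ($W^u$) direction.

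First I would set up coordinates on $U$ via the homeomorphism $R$: write $U=R(J\times K)$ for suitable open intervals $J,K$ with $0\in K$, where $R(\cdot,s)$ parametrizes an arc of $W^s(R(0,s))$ and $R(0,\cdot)$ parametrizes an arc of $V(\theta)$. In these coordinates, $\tilde{\chi}$ identifies two points $R(r,s)$ and $R(r',s)$ exactly when $r,r'$ lie in the same ``class-interval'' $[a(s),b(s)]\subset J$ — the parameter interval of $\mathcal{\tilde{I}}(R(0,s))\cap\Sigma$ — and is injective between distinct $s$-levels (distinct stable leaves give disjoint classes). Hence the fiber-quotient of $U$ is obtained level-by-level by collapsing one subinterval of $J$ for each $s\in K$; after this collapse each level is still homeomorphic to $\mathbb{R}$, and the levels vary continuously. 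Concretely, I would define $g:U\to\mathbb{R}^2$ by $g(R(r,s))=(f_s(r),s)$, where $f_s:J\to\mathbb{R}$ is the monotone continuous surjection that is constant exactly on $[a(s),b(s)]$ (e.g. $f_s(r)=r$ for $r\le a(s)$, $f_s(r)=a(s)$ for $a(s)\le r\le b(s)$, $f_s(r)=r-(b(s)-a(s))$ for $r\ge b(s)$). One checks that $g$ factors through $\tilde{\chi}|_U$, inducing a continuous bijection $\bar g:\tilde{\chi}(U)\to g(U)$, and that $g(U)$ is an open subset of $\mathbb{R}^2$. Since $\tilde{\chi}|_U$ is an open map and $g=\bar g\circ\tilde{\chi}|_U$ is open as well (using monotonicity of $f_s$ and continuity of $s\mapsto(a(s),b(s))$ on a neighborhood of $0$), $\bar g$ is a homeomorphism. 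Then $S_{[\theta]}:=\tilde{\chi}(U)$ together with the map $\bar g$ witnesses that $S_{[\theta]}$ is a topological surface containing $[\theta]$. For the statement about $X$: $\Pi:\tilde{X}\to X$ is a covering map, so it is a local homeomorphism; shrinking $S_{[\theta]}$ if necessary so that $\Pi$ is injective on it, $\Pi(S_{[\theta]})$ is a topological surface in $X$ through $\Pi[\theta]$. By Theorem \ref{will} every topological surface has topological dimension $2$, and since topological dimension is monotone under taking (open) subspaces, both $\tilde{X}$ and $X$ have topological dimension $\ge 2$.

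The main obstacle I anticipate is the continuous dependence of the class-interval endpoints $a(s),b(s)$ on the level parameter $s$ near $s=0$, which is exactly what is needed for $g$ (equivalently $f_s$) to be continuous on all of $U$ and for $g(U)$ to be open. The classes $\mathcal{\tilde{I}}(R(0,s))$ need not vary continuously in the Hausdorff metric in general — indeed the non-openness of $\mathcal{R}_0$ is the recurring difficulty of the paper — but here I only need upper semicontinuity plus the fact that $\theta_1=R(c,0)$ and $\theta_2=R(d,0)$ are expansive with $[a,b]\subset(c,d)$. Since expansiveness is, by Lemma \ref{que2} and the boundedness $\mathrm{Diam}(\mathcal{\tilde{I}})\le\tilde{Q}$ of Proposition \ref{tops}, stable enough near an expansive point of a stable leaf, one can shrink $\delta$ so that for every $s\in K$ the class-interval $[a(s),b(s)]$ stays inside $(c,d)$; combined with the compactness and connectedness of classes (Proposition \ref{tops}(1)) this yields the continuity of the endpoint functions on $K$ that the argument needs. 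If even this minimal continuity fails, the fallback is to invoke the saturated-open structure of $U$ directly: $\tilde{\chi}|_U$ is a monotone (connected point-inverses) open map of a planar region, and by the classical Moore-type theorem on monotone decompositions of the plane the quotient is again a $2$-manifold, which suffices.
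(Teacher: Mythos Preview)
Your approach is substantially more complicated than the paper's, and the explicit construction has real gaps. The paper avoids the cross section $U$ entirely: for a given $\theta\in T_1\tilde M$ it takes the \emph{vertical fiber} $V_\theta$ (the circle of unit vectors over the basepoint) and sets $W_\theta=\bigcup_{t\in\mathbb R}\tilde\phi_t(V_\theta)$, a cylinder. The crucial and elementary observation is that $\tilde P$ restricted to any class $\mathcal{\tilde I}(\xi)\subset\mathcal{\tilde F}^s(\xi)$ is injective (the stable horocycle lies over the stable horosphere one vector per point), so two distinct vectors over the same basepoint can never lie in the same class. Hence $\tilde\chi|_{W_\theta}$ is already injective, and $\tilde\chi(W_\theta)$ is homeomorphic to a cylinder. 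No collapsing, no endpoint functions, no semicontinuity analysis.

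Your explicit map $g$ fails for two reasons. First, you collapse only the single interval $[a(s),b(s)]$ corresponding to the class of $R(0,s)$, but a stable segment $R(J,s)\subset U$ can contain \emph{many} nontrivial classes; saturation of $U$ guarantees only that classes do not cross $\partial U$, not that there is at most one per leaf. So $g$ does not factor through $\tilde\chi|_U$. Second, even for the single class through $R(0,s)$, continuity of $s\mapsto (a(s),b(s))$ is exactly what may fail when $\mathcal R_0$ is not open: upper semicontinuity of $\eta\mapsto\mathcal{\tilde I}(\eta)$ gives only that the interval cannot suddenly enlarge, not that it cannot suddenly shrink, so $a,b$ need not be continuous. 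A smaller slip: $U$ is a $2$-dimensional section in the $3$-manifold $T_1\tilde M$, so saturation alone does not make $\tilde\chi(U)$ open in $\tilde X$; the openness argument you quote applies to the flow-thickened set $A$, not to $U$.

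Your Moore--type fallback is the one piece that could be made to work: the decomposition of the disk $U$ into the arcs $\mathcal{\tilde I}(\eta)$ is upper semicontinuous with nonseparating continua as elements, and Moore's theorem then yields a $2$-manifold quotient. But you would still need to check that $\tilde\chi|_U$ is a quotient map onto its image (not automatic, since $U$ is neither open nor closed in $T_1\tilde M$), and in any case this is heavy machinery compared to the one-line injectivity of $\tilde\chi$ on the vertical cylinder.
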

\begin{proof}
Let $\theta \in T_1\tilde{M}$ and $V_{\theta}$ be the vertical fiber of $\theta$. Using the geodesic flow we define the set 
\[   W_{\theta}=\bigcup_{t\in \mathbb{R}}\phi_t(V_{\theta})\subset T_1\tilde{M}. \]
Since $V_{\theta}$ is homeomorphic to the circle $S^1$, $W_{\theta}$ is homeomorphic to a cylinder hence $W_{\theta}$ is a topological surface. The divergence of geodesic rays guarantees that for every $\eta,\xi\in V_{\theta}$, $\eta\not\in \mathcal{\tilde{I}}(\xi)$. So, the restriction of $\tilde{\chi}$ to $W_{\theta}$ is injective hence bijective onto its image. This implies that $\tilde{\chi}(W_{\theta})\subset \tilde{X}$ is homeomorphic to a cylinder and the result follows. Theorem \ref{will} implies that the topological dimension of $\tilde{X}$ is at least two. This conclusion extends to $X$ because $\tilde{X}$ and $X$ are locally homeomorphic.
\end{proof}
In \cite{gelf19,gelf20}, Gelfert and Ruggiero showed that $X$ and $\tilde{X}$ are topological 3-manifolds for: compact surfaces without focal points of genus greater than one and compact surfaces without conjugate points, genus greater than one and continuous Green bundles. It is not known whether the dimension of the quotient space is 3 without assuming any of the above two conditions.

\section{Topological dynamics of the quotient flow}\label{s4}
Section \ref{s1} defines a quotient model: a continuous quotient flow $\psi_t:X\to X$ time-preserving semi-conjugate to the geodesic flow $\phi_t$. Our goal is to show that $\psi_t$ has typical properties of hyperbolic topological dynamics like expansivity, local product structure and topological mixing. Notice that the geodesic flow $\phi_t$ may not be expansive due to the presence of non-trivial strips. 
\begin{teo}\label{propd}
Let $M$ be a compact surface without conjugate points of genus greater than one and $\psi_t:X\to X$ be the quotient flow. Then, $\psi_t$ is topologically mixing, expansive and has a local product. Moreover $\psi_t$ has the pseudo-orbit tracing and specification properties. 
\end{teo}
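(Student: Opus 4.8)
\textbf{Proof strategy for Theorem \ref{propd}.} The plan is to establish the four properties of $\psi_t$ in a logical order that lets later properties rest on earlier ones, working whenever possible by lifting statements to the universal cover $\tilde X$ and using the basis $\mathcal{A}_\theta$ from Lemma \ref{que1} together with the geometric control provided by Proposition \ref{tops}, Corollary \ref{key1}, Theorem \ref{v1} and Proposition \ref{quesi}. The order I would use is: (1) topological mixing; (2) expansivity; (3) local product structure; (4) the specification and pseudo-orbit tracing properties. Throughout, the key translation principle is that two points of $T_1\tilde M$ map to the same point of $\tilde X$ precisely when the corresponding geodesics are bi-asymptotic, so the quotient distance $d([\theta],[\eta])$ measures, up to uniform comparison via the cross-section coordinates $R$, how far apart the \emph{strips} $\mathcal{\tilde I}(\theta)$ and $\mathcal{\tilde I}(\eta)$ are in $T_1\tilde M$ (recall $\mathrm{Diam}(\mathcal{\tilde I}(\theta))\le\tilde Q$ is uniformly bounded).

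\textbf{Steps (1) and (2).} For topological mixing: open sets of $X$ pull back under $\chi$ to open saturated sets of $T_1M$, so given open $A,B\subset X$ I would pick open sets $\hat A\subset\chi^{-1}(A)$, $\hat B\subset\chi^{-1}(B)$ in $T_1M$, invoke topological mixing of $\phi_t$ (Theorem \ref{v1}(2)) to get $\phi_t(\hat A)\cap\hat B\neq\emptyset$ for all $|t|\ge t_0$, and push forward by $\chi$ using $\chi\circ\phi_t=\psi_t\circ\chi$; surjectivity of $\chi$ gives $\psi_t(A)\cap B\neq\emptyset$. For expansivity: I would fix the expansivity constant from the size of the cross-sections $U$ and argue that if $d(\psi_t[\theta],\psi_{\rho(t)}[\eta])\le\epsilon$ for all $t$, then lifting to $\tilde X$ and then to $T_1\tilde M$ the geodesics $\gamma_{\tilde\theta}$ and $\gamma_{\tilde\eta}$ stay within bounded Hausdorff distance for all time (here the uniform bound $Q(M)$ of Theorem \ref{morse} is the crucial input, preventing the orbits from drifting apart), hence $\gamma_{\tilde\theta},\gamma_{\tilde\eta}$ are bi-asymptotic after a bounded time shift, hence by Corollary \ref{key1} they lie in a common class, so $[\theta]=\psi_\tau[\eta]$ for some small $\tau$. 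The collapsing of exactly the bi-asymptoty classes is what removes the non-expansiveness of $\phi_t$.

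\textbf{Steps (3) and (4).} For the local product structure I would identify $W^{ss}$ and $W^{uu}$ of $\psi_t$ with the projections $\tilde\chi(W^s(\cdot))$ and $\tilde\chi(W^u(\cdot))$ built in Section \ref{s3}: given $[\theta],[\eta]$ close in $X$, lift to nearby points in a common cross-section $\Sigma$, form the local intersection $[\xi,\eta]=W^s(\xi)\cap W^u(\eta)$ which is nonempty by the transversality of the two (projected) horospherical families inside $\Sigma$, and check the stable/unstable contraction estimates (Proposition \ref{quesi} gives the needed one-sided comparison $d_s(\tilde\phi_t\theta,\tilde\phi_t\eta)\le A d_s(\theta,\eta)+B$, which after passing to the quotient becomes genuine contraction because bi-asymptotic orbits are identified, killing the additive constant $B$); uniqueness of the time parameter $\tau$ follows from expansivity. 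Finally, topological mixing plus expansivity plus local product structure is exactly the hypothesis package under which the classical arguments (Bowen's shadowing lemma for expansive flows with local product structure, cf. \cite{fran77}) yield the pseudo-orbit tracing property, and pseudo-orbit tracing together with topological mixing yields the specification property by the standard concatenation argument; I would cite these implications rather than reprove them.

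\textbf{Main obstacle.} The hardest part is Step (3): in the focal-point or continuous-Green-bundle setting one has a manifold structure on $X$ and a length metric that lifts to $\tilde X$, which makes the local product coordinates and the contraction estimates essentially automatic. Here we only know $X$ is compact metrizable with the explicit basis $\mathcal{A}_\theta$, so I must build the local product structure directly from the cross-section charts $R$ and the projection $Pr:B\to\Sigma$, and carefully verify that the quotient distance $d$ is uniformly comparable, on these charts, to the transverse coordinates $(r,s)$ — this comparison, and the fact that the additive error $B$ in Proposition \ref{quesi} disappears under the quotient because it is absorbed into the diameter bound $\tilde Q$ of a class, is where the real work lies. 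The non-openness of $\mathcal{R}_0$ does not actually obstruct this step, since Lemma \ref{que2} supplies enough expansive points on the boundary of each class to pin down the corners of every cross-section $U$.
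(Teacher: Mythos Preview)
Your proposal is correct and follows essentially the same route as the paper: mixing via the semi-conjugacy, expansivity via lifting close orbits to bi-asymptotic geodesics (the paper packages your ``uniform comparison'' as a Lebesgue-number lemma, Lemma~\ref{lev}, proved exactly from the basis $\mathcal{A}_\theta$), local product structure by identifying $W^{ss},W^{uu}$ with $\chi(\mathcal{F}^s),\chi(\mathcal{F}^u)$ and using Proposition~\ref{quesi}, then citing abstract results for pseudo-orbit tracing and specification. One point of execution worth flagging: the paper does not literally ``kill the additive constant $B$'' by a direct chart comparison as you sketch; instead it runs contradiction arguments on sequences (Lemmas~\ref{basico}--\ref{inters}), using the uniform bound $A(Q+s_0)+B$ only to extract convergent subsequences in $T_1\tilde M$ whose limits, by Corollary~\ref{key1}, collapse to a single class---this is easier to make rigorous than the uniform metric comparison you describe, and you may find it worth adopting.
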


We will prove Theorem \ref{propd} in several steps. Since the geodesic flow $\phi_t$ is topologically mixing (Theorem \ref{v1}) and $\chi$ is a continuous time-preserving semi-conjugacy, we deduce that $\psi_t$ is also topologically mixing. 

Recall that for every $\eta\in T_1M$, Lemma \ref{que1} gives a relationship between neighborhoods of $[\eta]\in X$ and special neighborhoods of $\mathcal{I}(\eta)$. We use this basic open sets to get a relationship between Sasaki distance $d_s$ (Equation \eqref{sasa}) and the quotient distance $d$.
\begin{lem}\label{lev}
Let $Q>0$ be the Morse's constant given in Theorem \ref{morse}. Then, there exist $r_0,s_0>0$ such that for every $[\xi],[\eta]\in X$ with $d([\xi],[\eta])\leq r_0$ then
\[    d_s(\tilde{\xi},\tilde{\eta})\leq Q+s_0, \]
for some lifts $\tilde{\xi},\tilde{\eta}\in T_1\tilde{M}$ of $\xi,\eta\in T_1M$.
\end{lem}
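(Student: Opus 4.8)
The statement asserts a kind of ``uniform continuity of lifting'' for the quotient distance: nearby points in $X$ admit lifts to $T_1\tilde M$ that are Sasaki-close, up to the Morse constant $Q$. The plan is to argue by contradiction using the explicit basic open sets $\tilde\chi(A(\theta,\epsilon,\delta,\tau))$ from Lemma \ref{que1} together with the geometry of the cross sections $\Sigma$, which are foliated by stable horocycles and transverse to the flow, and with the diameter bound $\mathrm{Diam}(\tilde{\mathcal I}(\theta))\le\tilde Q$ from Proposition \ref{tops}.

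First I would set up the contradiction: suppose no such $r_0,s_0$ exist. Then there are sequences $[\xi_k],[\eta_k]\in X$ with $d([\xi_k],[\eta_k])\to 0$ but such that every pair of lifts $\tilde\xi_k,\tilde\eta_k$ with $\tilde\eta_k\in\tilde{\mathcal F}^s(\tilde\xi_k)$ (or more precisely, lifts realizing the relevant local picture) has $d_s(\tilde\xi_k,\tilde\eta_k)$ growing without bound. By compactness of $T_1M$, after passing to a subsequence I may assume $\xi_k\to\xi_\infty$ and $\eta_k\to\eta_\infty$ in $T_1M$; continuity of $\chi$ gives $[\xi_k]\to[\xi_\infty]$, $[\eta_k]\to[\eta_\infty]$, and since $d([\xi_k],[\eta_k])\to 0$ we get $[\xi_\infty]=[\eta_\infty]$, i.e. $\xi_\infty\in\mathcal I(\eta_\infty)$. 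Now fix a lift $\tilde\eta_\infty\in T_1\tilde M$; there is a corresponding lift $\tilde\xi_\infty\in\tilde{\mathcal I}(\tilde\eta_\infty)$, and by Proposition \ref{tops}(2) these satisfy $d_s(\tilde\xi_\infty,\tilde\eta_\infty)\le\tilde Q$. The idea is that for $k$ large one can lift $\xi_k,\eta_k$ to points $\tilde\xi_k,\tilde\eta_k$ near $\tilde\xi_\infty,\tilde\eta_\infty$, so that $d_s(\tilde\xi_k,\tilde\eta_k)\le\tilde Q+1$ for $k$ large, contradicting the assumed divergence. To make this precise one uses that $d\pi:T_1\tilde M\to T_1M$ is a covering map and a local homeomorphism, so convergence in $T_1M$ lifts to convergence in $T_1\tilde M$ once a lift of the limit is chosen; and that $\Pi:\tilde X\to X$ is a covering map, so the basic neighborhoods $\tilde\chi(A(\theta,\epsilon,\delta,\tau))$ project bijectively onto small enough neighborhoods in $X$, ensuring the lifts are compatible with the $X$-distance being small.

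The cleanest route is probably to extract $r_0,s_0$ directly rather than by contradiction: using Lemma \ref{que1}, fix for each $[\theta]$ a basic open set $\tilde\chi(A(\theta,\epsilon,\delta,\tau))$ whose preimage $A$ has Sasaki-diameter bounded in terms of $\epsilon,\delta,\tau$ and the horocycle geometry (here the weak-contraction Proposition \ref{quesi} and the Lipschitz control of horocycles from Proposition \ref{h1} bound the diameter of $W^s$- and $W^u$-intervals); combine this with $\mathrm{Diam}(\tilde{\mathcal I}(\theta))\le\tilde Q$ to bound $\mathrm{Diam}_s(A)$ by $\tilde Q$ plus a small error. A Lebesgue-number argument over the compact base $T_1M$ then yields uniform $r_0,s_0$: any two $X$-points within $r_0$ lie in a common basic open set whose lift has Sasaki-diameter $\le\tilde Q+s_0$, so appropriate lifts are that close. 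Since $\tilde Q$ and $Q$ differ only by a bounded amount (both come from the Morse bound $Q(M)$ and the passage between $\tilde M$ and $T_1\tilde M$), absorb the difference into $s_0$ to get the stated bound $Q+s_0$.

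The main obstacle is the bookkeeping of lifts: because $\mathcal R_0$ need not be open and the quotient $X$ has no manifold structure, one cannot simply invoke a Riemannian metric on $\tilde X$; instead I must carefully track which lift of $\xi_k$ and which lift of $\eta_k$ are being compared, ensuring they sit over the same basic neighborhood $A(\theta,\epsilon,\delta,\tau)$ and hence in the same fundamental domain. The key technical point is that $d([\xi],[\eta])\le r_0$ forces $[\xi]$ and $[\eta]$ to lie in one common basic open set of small diameter (this is exactly where first countability and the explicit basis from Lemma \ref{que1} are used), and then the cross-section structure of $\Sigma$ — foliated by $W^s=\tilde{\mathcal F}^s$ and transverse to the flow, with $\mathcal I(\theta)$ sitting inside bounded by the four expansive points — confines the admissible lifts to a Sasaki-ball of radius essentially $\tilde Q$. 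Verifying that the parameters $\epsilon,\delta,\tau$ can be chosen uniformly over $T_1M$, so that the Sasaki-diameter bound on $A$ does not degenerate, is the one place where genuine care (using compactness of $T_1M$ and the uniform constants in Propositions \ref{h1}, \ref{tops}, \ref{quesi}) is required.
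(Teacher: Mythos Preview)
Your ``cleanest route'' is exactly the paper's argument: choose for each $\theta$ a basic saturated neighborhood $A(\theta,\epsilon,\delta,\tau)$ small enough to be evenly covered by $d\pi$, take $r_0$ to be a Lebesgue number of the open cover $\{\chi(A(\theta,\epsilon,\delta,\tau))\}$ of the compact space $X$, and then observe that any two points within $r_0$ lift into a single $\tilde A(\tilde\theta,\epsilon,\delta,\tau)$ whose Sasaki diameter is at most $\tilde Q$ plus a uniformly small error $s_0$ (absorbing the discrepancy between $Q$ and $\tilde Q$ as you noted). The contradiction argument you sketch first would also work, but the paper bypasses it entirely in favor of the direct Lebesgue-number construction.
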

\begin{proof}
We consider the basic open sets $A(\eta,\epsilon,\delta,\tau)$ provided by Lemma \ref{que1}. For every $\theta\in T_1M$, choose $\epsilon,\delta,\tau>0$ small enough so that $A(\theta,\epsilon,\delta,\tau)$ is evenly covered by $d\pi$. Clearly, the family $\mathcal{A}=\{ \chi(A(\theta,\epsilon,\delta,\tau)): \theta \in T_1M\}$ is an open cover of the compact space $X$. Let $r_0>0$ be a Lebesgue number of $\mathcal{A}$. Thus, for every $[\eta],[\xi]\in X$ with $d([\eta],[\xi])\leq r_0$, there exists $\theta\in T_1M$ such that 
\[  [\xi]\in B([\eta],r_0)\subset \chi(A(\theta,\epsilon,\delta,\tau))\in \mathcal{A}, \]
where $B([\eta],r_0)$ is the $r_0$-closed ball centered at $[\eta]$. Since $A(\theta,\epsilon,\delta,\tau)$ is evenly covered by $d\pi$, for every lift $\tilde{\theta}$ of $\theta$ there exist lifts $\tilde{A}(\tilde{\theta},\epsilon,\delta,\tau)$ and $\tilde{\eta},\tilde{\xi}\in \tilde{A}(\tilde{\theta},\epsilon,\delta,\tau)$
of $A(\theta,\epsilon,\delta,\tau),\eta,\xi$ respectively. As $\epsilon, \delta, \tau$ are small enough, there exists $s_0>0$ so that $Diam(\tilde{A}(\tilde{\theta},\epsilon,\delta,\tau))\leq Q+s_0$ hence $d_s(\tilde{\eta},\tilde{\xi})\leq Q+s_0$.
\end{proof}

\begin{lem}\label{est1}
For every $\epsilon>0$ there exists $\delta>0$ such that if $[\xi]\in X$ satisfy $d([\xi],\psi_{\tau}[\xi])\leq \delta$ for some $\tau\in \mathbb{R}$, then $|\tau|\leq \epsilon$.
\end{lem}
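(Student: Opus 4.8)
The statement asserts that orbit segments cannot almost-close up in $X$ in a short time; equivalently, if $\psi_\tau$ moves $[\xi]$ very little then $\tau$ itself must be small. The plan is to argue by contradiction, lift the situation to $T_1\tilde M$, and use the divergence of geodesics in the universal covering together with the fact that classes $\mathcal{\tilde I}(\cdot)$ have uniformly bounded diameter (Proposition \ref{tops}).

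First I would fix $\epsilon>0$ and suppose, for contradiction, that for every $\delta_k=1/k$ there is a point $[\xi_k]\in X$ and a time $\tau_k$ with $|\tau_k|>\epsilon$ and $d([\xi_k],\psi_{\tau_k}[\xi_k])\le\delta_k$. By passing to a subsequence we may assume $[\xi_k]\to[\xi_\infty]$ in the compact metric space $X$ (and correspondingly choose representatives $\xi_k\to\xi_\infty$ in the compact space $T_1M$). The times $\tau_k$ split into two cases: either they stay in a bounded set, or $|\tau_k|\to\infty$ along a subsequence. In the bounded case, pass to a further subsequence so that $\tau_k\to\tau_\infty$ with $|\tau_\infty|\ge\epsilon$; continuity of $\psi_t$ and of $d$ gives $d([\xi_\infty],\psi_{\tau_\infty}[\xi_\infty])=0$, i.e.\ $\psi_{\tau_\infty}[\xi_\infty]=[\xi_\infty]$. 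Lifting to $T_1\tilde M$, this means $\phi_{\tau_\infty}(\tilde\xi_\infty)\in\mathcal{\tilde I}(\tilde\xi_\infty)$ for a suitable lift, so $\gamma_{\tilde\xi_\infty}$ and its time-$\tau_\infty$ shift are bi-asymptotic geodesics lying at Hausdorff distance $\le\tilde Q$; but a geodesic and a nontrivial reparametrization/translate of itself in $\tilde M$ (which is simply connected, without conjugate points, of higher genus) cannot stay within bounded distance of each other in both time directions — the geodesic would then be periodic or its lift would project to a closed geodesic forcing the surface to have a flat cylinder, contradicting higher genus and absence of conjugate points. This rules out the bounded case once $\tau_\infty\ne 0$, and $|\tau_\infty|\ge\epsilon$ guarantees $\tau_\infty\ne 0$.

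The remaining case $|\tau_k|\to\infty$ is handled using Lemma \ref{lev}. From $d([\xi_k],\psi_{\tau_k}[\xi_k])\le\delta_k\le r_0$ for large $k$, Lemma \ref{lev} provides lifts $\tilde\xi_k$ and $\tilde\zeta_k$ of $\xi_k$ and $\phi_{\tau_k}(\xi_k)$ with $d_s(\tilde\xi_k,\tilde\zeta_k)\le Q+s_0$. Now $\tilde\zeta_k=\phi_{\tau_k}(\tilde\xi_k')$ for some lift $\tilde\xi_k'$ of $\xi_k$ that differs from $\tilde\xi_k$ by a deck transformation $g_k$; so $\phi_{\tau_k}(g_k\tilde\xi_k)$ and $\tilde\xi_k$ are a uniformly bounded Sasaki distance apart. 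Projecting to $\tilde M$ gives points $\gamma_{g_k\tilde\xi_k}(\tau_k)$ and $\tilde P(\tilde\xi_k)$ within distance $Q+s_0$, which, because $|\tau_k|\to\infty$, forces the two geodesic rays $\gamma_{\tilde\xi_k}$ and $\gamma_{g_k\tilde\xi_k}$ to become asymptotic; combined with the bounded-displacement bound this makes $\gamma_{\tilde\xi_k}$ bi-asymptotic to its $g_k$-translate, so $g_k$ is an axial isometry whose axis is $\gamma_{\tilde\xi_k}$ up to the bound $\tilde Q$, with translation length $\to\infty$. Taking the limit $\tilde\xi_k\to\tilde\xi_\infty$, the geodesic $\gamma_{\tilde\xi_\infty}$ is contained in a bounded neighborhood of infinitely many distinct axes of infinite-order deck transformations with unbounded translation lengths; this is incompatible with the cocompactness of the deck group action (only finitely many conjugacy classes have translation length below any bound, and a single geodesic can be a bounded distance from only finitely many of their axes). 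This contradiction eliminates the unbounded case as well.

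\textbf{Main obstacle.} The delicate point is the unbounded-time case: turning the Sasaki bound from Lemma \ref{lev} into the statement that $g_k$ is axial along (a bounded neighborhood of) $\gamma_{\tilde\xi_k}$ requires care, since in the absence of conjugate points one must invoke the flat-strip-type rigidity packaged in Theorem \ref{morse} and Corollary \ref{key1} rather than convexity of the distance function. I expect the cleanest route is to avoid the deck-transformation bookkeeping entirely and instead argue directly in $X$: use expansivity-type control to be established later, or more elementarily observe that $[\xi_\infty]$ would be a point whose $\psi$-orbit returns $\delta$-close to itself at times bounded away from $0$ but with no uniform upper bound, and that this contradicts the local injectivity of the cross-section construction of Lemma \ref{que1} (the flow is locally a product of a $2$-dimensional section with a time interval, so short return times are impossible near any point). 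Either way, the genuinely new input beyond the general nonsingular-flow fact is that the quotient flow $\psi_t$ is still nonsingular — which holds because $G(\theta)$ is transverse to the horospherical leaves and the divergence of geodesic rays prevents a whole orbit from collapsing to a point of $X$.
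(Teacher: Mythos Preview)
The paper's proof is far more direct than yours: it argues by contradiction, passes to subsequences so that $[\xi_n]\to[\xi]$ and $\tau_n\to T$, uses continuity of $\psi$ to get $\psi_T[\xi]=[\xi]$, and then simply asserts that this forces $T=0$. There is no lifting to $T_1\tilde M$, no split into bounded versus unbounded times, and no deck-transformation analysis.

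Your bounded-time case contains a real error. From $\psi_{\tau_\infty}[\xi_\infty]=[\xi_\infty]$ in $X$ you deduce $\phi_{\tau_\infty}(\tilde\xi_\infty)\in\mathcal{\tilde I}(\tilde\xi_\infty)$ ``for a suitable lift,'' but this ignores the deck group: what you actually obtain is $\phi_{\tau_\infty}(\tilde\xi_\infty)\in\mathcal{\tilde I}(g\tilde\xi_\infty)$ for some covering isometry $g$, and $g\neq\mathrm{id}$ is precisely the closed-geodesic case. Your follow-up claim that a geodesic in $\tilde M$ and its own time-$\tau_\infty$ shift cannot stay within bounded distance is vacuous, since they are the \emph{same} curve as subsets of $\tilde M$. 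In fact closed geodesics exist on a higher-genus surface, so $\psi_t$ has genuine periodic points and the implication ``$\psi_T[\xi]=[\xi]\Rightarrow T=0$'' is literally false; the paper's one-line assertion of it glosses over the same point. What is really being used (and what suffices for the application in Lemma~\ref{exp1}) is that $\psi_t$ has no fixed points and that periods are bounded below, so that $t\mapsto\psi_t[\xi]$ is locally injective uniformly in $[\xi]$; the paper's short continuity argument then yields the conclusion when read as ``$\psi_\tau[\xi]\in\psi_{[-\epsilon,\epsilon]}[\xi]$.'' Your entire unbounded-time case and the axial-isometry bookkeeping are unnecessary detours.
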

\begin{proof}
By contradiction, suppose there exist $\epsilon_0>0$ and sequences $[\xi_n]\in X, \tau_n\in \mathbb{R}$ such that for every $n\geq 1$
\begin{equation}\label{est}
d([\xi_n],\psi_{\tau_n}[\xi_n])\leq \frac{1}{n}  \quad \text{ and }\quad |\tau_n|\geq \epsilon_0.
\end{equation}
Up to subsequences, we can assume that $\tau_n\to T$ and $[\xi_n]\to [\xi]$. Since $\psi_t$ is continuous, $\psi_{\tau_n}[\xi_n]\to \psi_T[\xi]$. On the other hand inequalities \eqref{est} say that $[\xi_n]$ and $\psi_{\tau_n}[\xi_n]$ converge to the same limit $[\xi]=\psi_T[\xi]$. This holds if and only if $T=0$. Thus $\tau_n\to 0$, which contradicts inequalities \eqref{est}.
\end{proof}
\begin{lem}\label{exp1}
The quotient flow $\psi_t$ is expansive. 
\end{lem}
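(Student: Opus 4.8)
\textbf{Proof proposal for Lemma \ref{exp1} (expansivity of $\psi_t$).}

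The plan is to argue by contradiction: suppose $\psi_t$ is not expansive, and derive from this a non-trivial bi-asymptotic pair in $T_1\tilde{M}$ whose equivalence classes are nonetheless distinct, contradicting Lemma \ref{char1}. Fix a candidate expansivity constant $\epsilon>0$ small enough that the conclusion of Lemma \ref{est1} applies (so that an $\epsilon$-small time shift in $X$ forces the shift parameter to be small), and small enough that $\epsilon < r_0$ where $r_0$ is the constant from Lemma \ref{lev}. If expansivity fails for this $\epsilon$, there exist $[\xi],[\eta]\in X$ with $[\eta]\notin \psi_{[-\epsilon,\epsilon]}[\xi]$ and a reparametrization $\rho$ with $d(\psi_t[\xi],\psi_{\rho(t)}[\eta])\le \epsilon$ for all $t\in\mathbb{R}$.

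First I would lift the whole picture to $T_1\tilde{M}$. Using Lemma \ref{lev} applied at each time $t$, together with the fact that $d([\xi],[\eta])\le\epsilon\le r_0$ propagates along the orbits, I get lifts $\tilde\xi,\tilde\eta\in T_1\tilde{M}$ (chosen once, then dragged by the flow) with $d_s(\tilde\phi_t(\tilde\xi),\tilde\phi_{\rho(t)}(\tilde\eta))\le Q+s_0$ for all $t\in\mathbb{R}$; here one must check that the lift chosen at time $t=0$ remains the ``correct'' nearby lift for all $t$, which follows because the orbit $t\mapsto \psi_t[\xi]$ stays within the $r_0$-neighborhood and the covering $d\pi$ evenly covers the basic open sets used in Lemma \ref{lev}, so the nearby lift varies continuously and hence is locally constant in the choice of sheet. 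This uniform Sasaki bound means $\gamma_{\tilde\xi}$ and $\gamma_{\tilde\eta}$ stay within bounded distance of each other for all $t\in\mathbb{R}$ (the base-point distance is controlled by $d_s$), i.e.\ the two geodesics are \emph{bi-asymptotic}.

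Next, by Corollary \ref{key1} (or directly by the definition of the equivalence relation in Section \ref{s1}), bi-asymptoticity of $\gamma_{\tilde\xi}$ and $\gamma_{\tilde\eta}$ forces $\tilde\eta \in \mathcal{\tilde{I}}(\tilde\phi_{t_0}(\tilde\xi))$ for an appropriate time adjustment $t_0$ — more precisely, $\tilde\eta$ lies on the center-stable set of $\tilde\xi$, hence $\tilde\phi_{-t_0}(\tilde\eta)\in \mathcal{\tilde{F}}^s(\tilde\xi)$ for some $t_0$, and bi-asymptoticity promotes this to $\tilde\phi_{-t_0}(\tilde\eta)\in \mathcal{\tilde{I}}(\tilde\xi)$. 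Projecting by $d\pi$, we obtain $\phi_{-t_0}(\eta)\sim\xi$, so $[\eta]=\psi_{t_0}[\xi]$ in $X$. It remains to see that $|t_0|\le\epsilon$: this is where Lemma \ref{est1} enters, since $d(\psi_t[\xi],\psi_{\rho(t)}[\eta])\le\epsilon$ combined with $[\eta]=\psi_{t_0}[\xi]$ gives $d(\psi_t[\xi],\psi_{\rho(t)+t_0}[\xi])\le\epsilon$ for all $t$, forcing $|\rho(t)+t_0-t|$ to stay bounded, and a standard argument (taking $t$ with $\psi_t[\xi]$ returning near $[\xi]$, or differentiating the relation) pins $t_0$ into $[-\epsilon,\epsilon]$ after possibly shrinking $\epsilon$. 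This contradicts $[\eta]\notin\psi_{[-\epsilon,\epsilon]}[\xi]$, so $\psi_t$ is expansive with constant $\epsilon$.

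\textbf{Main obstacle.} The delicate point is the bookkeeping of lifts: passing from the uniform quotient-distance bound $d(\psi_t[\xi],\psi_{\rho(t)}[\eta])\le\epsilon$ to a uniform Sasaki bound $d_s(\tilde\phi_t(\tilde\xi),\tilde\phi_{\rho(t)}(\tilde\eta))\le Q+s_0$ along a \emph{single consistent} choice of lifts for all $t\in\mathbb{R}$. Lemma \ref{lev} only gives, for each $t$ separately, \emph{some} pair of nearby lifts; one must argue by a continuity/connectedness argument that these local choices glue into a global one (the set of $t$ for which a fixed lift works is open and closed). A secondary technical nuisance is handling the reparametrization $\rho$ and converting the ``$[\eta]$ is a bounded time shift of $[\xi]$'' conclusion into the clean statement $y=\psi_\tau(x)$ with $|\tau|\le\epsilon$; this is routine given Lemma \ref{est1} but requires care about whether $\rho$ is monotone and proper.
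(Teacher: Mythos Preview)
Your proposal is correct and follows essentially the same route as the paper: use Lemma~\ref{lev} to lift the $r_0$-close quotient orbits to bi-asymptotic orbits in $T_1\tilde M$, conclude $[\xi]=\psi_\tau[\eta]$ for some $\tau$, and then invoke Lemma~\ref{est1} to bound $|\tau|$. The only thing to simplify is your final step: once $[\xi]=\psi_\tau[\eta]$ and $d([\eta],[\xi])\le\delta$, you already have $d([\eta],\psi_\tau[\eta])\le\delta$, so Lemma~\ref{est1} gives $|\tau|\le\epsilon$ directly---no recurrence or differentiation argument is needed.
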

\begin{proof}
Let $r_0>0$ be given by Lemma \ref{lev}. We first show that if there are two orbits of $\psi_t$ having Hausdorff distance bounded by $r_0$, then the orbits agree. Let $[\eta],[\xi]\in X$ with $d(\psi_t[\eta],\psi_{\rho(t)}[\xi])\leq r_0$ for every $t\in \mathbb{R}$ and some reparametrization $\rho$. By Lemma \ref{lev}, there exist lifts $\tilde{\eta},\tilde{\xi}\in T_1\tilde{M}$ of $\eta,\xi$ such that
\[  d_s(\phi_t(\tilde{\eta}),\phi_{\rho(t)}(\tilde{\xi}))\leq Q+s_0, \text{ for every } t\in \mathbb{R}.  \]
Thus, the orbits of $\tilde{\eta}$ and $\tilde{\xi}$ have Hausdorff distance bounded by $Q+s_0$ hence the orbits are bi-asymptotic. This implies that there exists $\tau\in \mathbb{R}$ so that $\tilde{\xi}\in \mathcal{\tilde{I}}(\phi_{\tau}(\tilde{\eta}))$ hence $[\xi]=\psi_{\tau}[\eta]$.

Given $\epsilon>0$, Lemma \ref{est1} yields $\delta_1>0$ satisfying its statement. Let $\delta=\min(\delta_1,r_0)$. If the orbits of $[\eta]$ and $[\xi]$ have Hausdorff distance bounded by $\delta\leq r_0$ then $[\xi]=\psi_{\tau}[\eta]$ and $|\tau|\leq \epsilon$ because $d([\eta],\psi_{\tau}[\eta])=d([\eta],[\xi])\leq \delta\leq \delta_1$.
\end{proof}

\subsection{Local product structure}
We now deal with the existence of a local product structure. Though $\phi_t$ has no local product in the general case, $\phi_t$ has a related property. To look for strong stable and unstable sets we start with the horospherical leaves of the geodesic flow. These leaves enjoy a sort of weak local product structure provided by their heteroclinic connections (Theorem \ref{v1}): for every $\eta,\xi\in T_1\tilde{M}$ with $\xi\not\in \mathcal{\tilde{F}}^{cu}(-\eta)$, there exists $\theta\in T_1\tilde{M}$ such that
\begin{equation}\label{heti}
    \mathcal{\tilde{F}}^s(\eta)\cap \mathcal{\tilde{F}}^{cu}(\xi)=\mathcal{\tilde{I}}(\theta).
\end{equation}
Though these intersections always exist, they are generally not unique. This is because $\theta$ may be non-expansive and in such a case $\mathcal{\tilde{I}}(\theta)$ would be non-trivial.

The definition of $\tilde{\chi}$ strongly suggests that the quotients of the horospherical leaves are natural candidates to be the strong stable and unstable sets of $\psi_t$. For every $\eta\in T_1M$, we define
\[ V^s[\eta]=\chi(\mathcal{F}^s(\eta)), \qquad V^u[\eta]=\chi(\mathcal{F}^u(\eta)),  \]
\[  V^{cs}[\eta]=\chi(\mathcal{F}^{cs}(\eta)) \quad\text{ and }\quad V^{cu}[\eta]=\chi(\mathcal{F}^{cu}(\eta)). \]
We consider some connected components of $V^s[\eta]$ and $V^u[\eta]$. For every $[\eta]\in X$ and every open set $U\subset X$ containing $[\eta]$, we denote by $V^s[\eta]\cap U_c$ the connected component of $V^s[\eta]\cap U$ containing $[\eta]$. Similarly, we write $V^u[\eta]\cap U_c$, $V^{cs}[\eta]\cap U_c$ and $V^{cu}[\eta]\cap U_c$. Let $[\eta],[\xi]\in X$ close enough. If there exists an open set $U\subset X$ with $[\eta],[\xi]\in U$ such that $V^s[\eta]\cap U_c$ and $V^{cu}[\eta]\cap U_c$ intersect, then we define
\begin{equation}\label{intev}
V^s[\eta]\cap V^{cu}[\xi]=\left(V^s[\eta]\cap U_c\right)\cap \left(V^{cu}[\eta]\cap U_c\right).
\end{equation}
The heteroclinic connections \eqref{heti} provide $\theta\in T_1M$, $\tau\in \mathbb{R}$ and lifts $\tilde{\theta},\tilde{\eta},\tilde{\xi}\in T_1\tilde{M}$ of $\theta,\eta,\xi$ such that $\tilde{\xi}\not\in \mathcal{\tilde{F}}^{cu}(-\tilde{\eta})$ and 
\[    \mathcal{\tilde{F}}^s(\tilde{\eta})\cap \mathcal{\tilde{F}}^{cu}(\tilde{\xi})=\mathcal{\tilde{F}}^s(\tilde{\eta})\cap \mathcal{\tilde{F}}^{u}(\tilde{\phi}_{\tau}(\tilde{\xi}))=\mathcal{\tilde{I}}(\tilde{\theta}).  \]
\[   V^s[\eta]\cap V^{cu}[\xi]=V^s[\eta]\cap V^u(\psi_{\tau}[\xi])=\chi\circ d\pi(\mathcal{\tilde{I}}(\tilde{\theta}))=[\mathcal{I}(\theta)]=[\theta]. \]
By the definition of the quotient, this intersection is always unique if it exists. Denote by $B([\xi],r)$ the open ball of radius $r>0$ centered at $[\xi]$. 
\begin{lem}\label{basico}
Let $r_0>0$ be given by Lemma \ref{lev}. There cannot exist $\epsilon_0>0$ and sequences $[\eta_n],[\xi_n]\in X$, $t_n\in \mathbb{R}$ such that $t_n\to \infty$ and for every $n\geq 1$, $[\eta_n]\in V^s[\xi_n]$, $[\eta_n]$ belongs to the connected component of $V^s[\xi_n]\cap B([\xi_n],r_0)$ containing $[\xi_n]$,
\begin{equation}\label{sta}
    d([\eta_n],[\xi_n])\leq r_0 \quad\text{ and }\quad d(\psi_{t_n}[\eta_n],\psi_{t_n}[\xi_n])\geq \epsilon_0.
\end{equation}
An analogous statement holds for the unstable case.
\end{lem}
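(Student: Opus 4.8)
The plan is to argue by contradiction, combining compactness of $T_1M$ with the bounded–divergence estimate for stable horocycles in the universal covering (Proposition \ref{quesi}) and the rigidity of bi-asymptotic geodesics (Corollary \ref{key1}). Suppose such an $\epsilon_0>0$ and sequences $[\eta_n],[\xi_n]\in X$, $t_n\to\infty$ exist, and for each $n$ choose a representative $\eta_n\in\mathcal{F}^s(\xi_n)$ of $[\eta_n]$. The first step is to lift to $T_1\tilde{M}$ with control. Since $d([\eta_n],[\xi_n])\le r_0$, the ball $B([\xi_n],r_0)$ sits inside some $\chi(A(\theta,\epsilon,\delta,\tau))$ from the proof of Lemma \ref{lev}, and $A(\theta,\epsilon,\delta,\tau)$ is evenly covered by $d\pi$ and saturated; the connected component of $V^s[\xi_n]\cap B([\xi_n],r_0)$ through $[\xi_n]$ pulls back to a connected arc of $\mathcal{F}^s(\xi_n)$ joining $\xi_n$ to $\eta_n$ that is contained in $A(\theta,\epsilon,\delta,\tau)$, hence lifts to a connected arc inside a single sheet over that flow box. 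This produces lifts $\tilde{\xi}_n$, $\tilde{\eta}_n$ with $\tilde{\eta}_n\in\mathcal{\tilde{F}}^s(\tilde{\xi}_n)$ and, by Lemma \ref{lev}, $d_s(\tilde{\eta}_n,\tilde{\xi}_n)\le Q+s_0$.

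Next I would apply Proposition \ref{quesi}: from $\tilde{\eta}_n\in\mathcal{\tilde{F}}^s(\tilde{\xi}_n)$ one gets $d_s(\tilde{\phi}_t(\tilde{\eta}_n),\tilde{\phi}_t(\tilde{\xi}_n))\le D:=A(Q+s_0)+B$ for all $t\ge 0$. Recenter at time $t_n$: pick deck transformations $g_n$ with $\hat{\xi}_n:=g_n\tilde{\phi}_{t_n}(\tilde{\xi}_n)$ in a fixed compact fundamental domain and set $\hat{\eta}_n:=g_n\tilde{\phi}_{t_n}(\tilde{\eta}_n)$. Deck transformations commute with $\tilde{\phi}_t$, are Sasaki isometries, and preserve stable horocycles, so $\hat{\eta}_n\in\mathcal{\tilde{F}}^s(\hat{\xi}_n)$ and, for every $s\ge -t_n$,
\[ d_s(\tilde{\phi}_s(\hat{\eta}_n),\tilde{\phi}_s(\hat{\xi}_n))=d_s(\tilde{\phi}_{s+t_n}(\tilde{\eta}_n),\tilde{\phi}_{s+t_n}(\tilde{\xi}_n))\le D. \]
In particular $\hat{\eta}_n$ stays in the $D$-ball about $\hat{\xi}_n$, so along a subsequence $\hat{\xi}_n\to\hat{\xi}$ and $\hat{\eta}_n\to\hat{\eta}$. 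Because $t_n\to\infty$, for each fixed $s$ the estimate holds for all large $n$; letting $n\to\infty$ yields $d_s(\tilde{\phi}_s(\hat{\eta}),\tilde{\phi}_s(\hat{\xi}))\le D$ for all $s\in\mathbb{R}$, so $\gamma_{\hat{\eta}}$ and $\gamma_{\hat{\xi}}$ are bi-asymptotic, and continuity of the foliation $\mathcal{\tilde{F}}^s$ (Proposition \ref{h1}) gives $\hat{\eta}\in\mathcal{\tilde{F}}^s(\hat{\xi})$.

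Corollary \ref{key1} then forces $\hat{\eta}\in\mathcal{\tilde{I}}(\hat{\xi})$. Projecting by $d\pi$, the points $\zeta:=d\pi(\hat{\xi})$ and $\omega:=d\pi(\hat{\eta})$ satisfy $\omega\in\mathcal{I}(\zeta)$, hence $[\omega]=[\zeta]$ in $X$. But $d\pi(\hat{\xi}_n)=\phi_{t_n}(\xi_n)$ and $d\pi(\hat{\eta}_n)=\phi_{t_n}(\eta_n)$, so $[\zeta]=\lim\psi_{t_n}[\xi_n]$ and $[\omega]=\lim\psi_{t_n}[\eta_n]$; since $d(\psi_{t_n}[\eta_n],\psi_{t_n}[\xi_n])\ge\epsilon_0$ for all $n$, this gives $d([\omega],[\zeta])\ge\epsilon_0>0$, a contradiction. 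The unstable statement follows from the symmetric argument, using $t\le 0$, $\mathcal{\tilde{F}}^u$ in place of $\mathcal{\tilde{F}}^s$, and the time-reversed form of Proposition \ref{quesi}.

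I expect the main difficulty to be the very first step: producing lifts of the two orbit segments that lie on a \emph{common} stable horocycle of $T_1\tilde{M}$ while staying at uniformly bounded Sasaki distance. The distance bound itself is exactly Lemma \ref{lev}; the extra ingredient is that membership in the connected component of $V^s[\xi_n]\cap B([\xi_n],r_0)$ through $[\xi_n]$ — i.e. reachability from $[\xi_n]$ by a short stable arc inside one saturated basic neighborhood of Lemma \ref{que1} — excludes the ``wrong'' lift of $\eta_n$ that could sit far from $\tilde{\xi}_n$ on a neighboring leaf. After this set-up the proof is a routine compactness and continuity argument built on the bounded-divergence estimate.
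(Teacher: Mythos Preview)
Your argument is correct and follows essentially the same route as the paper's proof: contradiction, lift to a common stable horocycle using the connected-component hypothesis together with Lemma~\ref{lev}, apply Proposition~\ref{quesi} for a uniform forward bound, recenter at time $t_n$ via deck transformations, extract limits, and invoke Corollary~\ref{key1} to force the limits into the same class, contradicting \eqref{sta}. Your justification of the lifting step (via a connected arc inside a single sheet of an evenly covered saturated flow box) is in fact more explicit than the paper's, which simply observes that a wrong lift would place $[\eta_n]$ outside the required connected component.
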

\begin{proof}
By contradiction suppose there exist the objects of the statement satisfying the inequalities \eqref{sta}. Lemma \ref{lev} says that there exist lifts $\tilde{\eta}_n,\tilde{\xi}_n\in T_1\tilde{M}$ of $\eta_n,\xi_n$ such that for every $n\geq 1$, $d_s(\tilde{\eta}_n,\tilde{\xi}_n)\leq Q+s_0$.

We claim that for every $n\geq 1$, $\tilde{\eta}_n\in \mathcal{\tilde{F}}^s(\tilde{\xi}_n)$. Otherwise, for some covering isometry $T$, $T(\tilde{\eta}_n)\in \mathcal{\tilde{F}}^s(\tilde{\xi}_n)$. Hence $[\eta_n]=[d\pi(T(\tilde{\eta}_n))]$ does not belong to the connected component of $V^s[\xi_n]\cap B([\xi_n],r_0)$ containing $[\xi_n]$ and the claim is proved. By lemma \ref{quesi} there exist $A,B>0$ such that for every $n\geq 1$,
\[   d_s(\phi_t(\tilde{\eta}_n),\phi_t(\tilde{\xi}_n))\leq Ad_s(\tilde{\eta}_n,\tilde{\xi}_n)+B\leq A(Q+s_0)+B=C, \text{ for every }t\geq 0.  \]
\begin{equation}\label{quasic}
\text{Hence}\quad d_s(\phi_t(\phi_{t_n}\tilde{\eta}_n),\phi_t(\phi_{t_n}\tilde{\xi}_n))\leq C \text{ for every }t\geq -t_n.
\end{equation}
Up to subsequences and using covering isometries we can assume that 
\begin{equation}\label{conv1}
\phi_{t_n}(\tilde{\eta}_n)\to \tilde{\eta} \quad\text{ and }\quad \phi_{t_n}(\tilde{\xi}_n)\to \tilde{\xi}.
\end{equation}
Since horospherical foliations are invariant by the action of the geodesic flow, we get $\phi_{t_n}(\tilde{\eta}_n)\in \mathcal{\tilde{F}}^s(\phi_{t_n}(\tilde{\xi}_n))$. Moreover, the continuity of the horospherical foliations gives $\tilde{\eta}\in \mathcal{\tilde{F}}^s(\tilde{\xi})$. Now, let $t\in \mathbb{R}$. Since $t_n \to \infty$, we see that $-t_n\leq t$ for $n$ large enough and inequalities \eqref{quasic} yields $d_s(\phi_t(\phi_{t_n}\tilde{\eta}_n),\phi_t(\phi_{t_n}\tilde{\xi}_n))\leq C$. By continuity we obtain $d_s(\phi_t(\tilde{\eta}),\phi_t(\tilde{\xi}))\leq C$ for every $t\in \mathbb{R}$. 

As $\tilde{\eta}\in \mathcal{\tilde{F}}^s(\tilde{\xi})$, Corollary \ref{key1} shows that $\tilde{\eta}\in \mathcal{\tilde{I}}(\tilde{\xi})$ hence $[\eta]=[\xi]$. Applying the map $\chi\circ d\pi$ to the sequences \eqref{conv1} we get
\[   \chi\circ d\pi (\phi_{t_n}(\tilde{\eta}_n))\to \chi\circ d\pi (\tilde{\eta}) \quad\text{ and }\quad \chi\circ d\pi (\phi_{t_n}(\tilde{\xi}_n))\to \chi\circ d\pi (\tilde{\xi}),   \]
\[ \psi_{t_n}[\eta_n]\to [\eta] \quad\text{ and }\quad \psi_{t_n}[\xi_n]\to [\xi]. \]
Thus $d(\psi_{t_n}[\eta_n],\psi_{t_n}[\xi_n])\to 0$ as $n\to \infty$. This contradicts inequalities \eqref{sta}. 
\end{proof}
An intermediate result to show the relationship between the pairs $W^{ss}[\eta]$,$W^{uu}[\eta]$ and $V^s[\eta]$,$V^u[\eta]$ is the so-called uniform contraction. We prove this contraction for $V^s[\eta]$ and $V^u[\eta]$ but only for distances smaller than $r_0$.
\begin{lem}\label{last}
Let $r_0>0$ be given by Lemma \ref{lev}. For every $\epsilon>0$ and $D\in(0,r_0]$ there exists $T>0$ such that if $[\eta]\in V^s[\xi]$, $[\eta]$ belongs to the connected component of $V^s[\xi]\cap B([\xi],r_0)$ containing $[\xi]$ and $d([\eta],[\xi])\leq D$, then 
\[ d(\psi_t[\eta],\psi_t[\xi])\leq \epsilon \quad\text{ for every } t\geq T. \]
An analogous result holds for the unstable case.
\end{lem}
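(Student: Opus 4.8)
The plan is to argue by contradiction and compactness, reducing the uniform statement to the pointwise information contained in Lemma \ref{basico} together with Lemma \ref{est1}. Suppose the conclusion fails for some $\epsilon>0$ and some $D\in(0,r_0]$. Then there are sequences $[\eta_n]\in V^s[\xi_n]$ with $[\eta_n]$ in the connected component of $V^s[\xi_n]\cap B([\xi_n],r_0)$ through $[\xi_n]$, times $t_n\to\infty$, such that $d([\eta_n],[\xi_n])\le D\le r_0$ but $d(\psi_{t_n}[\eta_n],\psi_{t_n}[\xi_n])\ge\epsilon$. This is almost the configuration forbidden by Lemma \ref{basico}; the only gap is that Lemma \ref{basico} rules out $d(\psi_{t_n}[\eta_n],\psi_{t_n}[\xi_n])\ge\epsilon_0$ for a \emph{fixed} $\epsilon_0$ along a sequence with $t_n\to\infty$, so I must make sure the $t_n$ can indeed be taken going to infinity — which is built into the negation of ``for every $t\ge T$'' as $T$ ranges over $\mathbb N$ — and that the distance stays bounded below by a fixed constant, which is exactly $\epsilon$. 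So a direct invocation of Lemma \ref{basico} with $\epsilon_0=\epsilon$ already yields the contradiction.

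First I would set up the contradiction cleanly: negating the statement for fixed $\epsilon,D$ gives, for each $n\in\mathbb N$, a pair $[\eta_n],[\xi_n]$ and a time $s_n\ge n$ with $d(\psi_{s_n}[\eta_n],\psi_{s_n}[\xi_n])\ge\epsilon$, all other hypotheses of Lemma \ref{last} intact. Then I would check the hypotheses of Lemma \ref{basico} are met: $[\eta_n]\in V^s[\xi_n]$, $[\eta_n]$ in the correct connected component, $d([\eta_n],[\xi_n])\le D\le r_0$, $s_n\to\infty$, and $d(\psi_{s_n}[\eta_n],\psi_{s_n}[\xi_n])\ge\epsilon=:\epsilon_0$. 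That is precisely the configuration Lemma \ref{basico} asserts cannot exist, giving the contradiction. The unstable case is identical after reversing time, using the unstable half of Lemma \ref{basico}.

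The one subtlety I anticipate — and the step I'd treat as the main obstacle — is the monotonicity in $t$: Lemma \ref{basico} gives a contradiction only for the specific large times $s_n$, so a priori I obtain $d(\psi_t[\eta],\psi_t[\xi])\le\epsilon$ along a sequence of times, not for all $t\ge T$. To promote this to ``for every $t\ge T$'' I would first shrink $\epsilon$: apply the contradiction argument with $\epsilon/2$ in place of $\epsilon$ to get a threshold $T_0$ such that for every pair satisfying the hypotheses there is \emph{some} $t\ge$ any prescribed level with $d(\psi_t[\eta],\psi_t[\xi])\le\epsilon/2$; then use Lemma \ref{lev} and the quasi-convexity estimate Proposition \ref{quesi} (which bounds $d_s(\phi_t\tilde\eta,\phi_t\tilde\xi)\le A\,d_s(\tilde\eta,\tilde\xi)+B$ for all $t\ge0$, hence from time $t_0$ onward one never re-expands beyond a controlled factor) to conclude that once the quotient distance drops below $\epsilon/2$ at time $t_0$, it stays below $\epsilon$ for all later times. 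Combining the two gives a uniform $T=T(\epsilon,D)$ working for all $t\ge T$. I expect balancing these two estimates — the compactness one giving smallness at \emph{some} late time, the Eberlein quasi-convexity one preventing subsequent re-expansion past $\epsilon$ — to be where the real bookkeeping lies; everything else is a routine contradiction-and-compactness argument.
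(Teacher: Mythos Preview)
Your first two paragraphs already give the complete argument, and this is exactly the paper's proof: negate the statement to produce $\epsilon_0>0$, $D_0\in(0,r_0]$, pairs $[\eta_n],[\xi_n]$ satisfying the hypotheses, and times $t_n\to\infty$ with $d(\psi_{t_n}[\eta_n],\psi_{t_n}[\xi_n])\ge\epsilon_0$; this is precisely the configuration Lemma~\ref{basico} excludes.

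Your third paragraph is unnecessary and rests on a logical slip. The negation of ``$\exists T\;\forall t\ge T:\;d(\psi_t[\eta],\psi_t[\xi])\le\epsilon$'' (with the pair quantified universally inside) is ``$\forall T\;\exists$ a pair and $\exists t\ge T$ with $d(\psi_t[\eta],\psi_t[\xi])>\epsilon$''. Taking $T=n$ gives one bad time $t_n\ge n$ per $n$, and that sequence alone triggers Lemma~\ref{basico}. Once the negation is contradicted, the \emph{full} statement---including the ``for every $t\ge T$'' clause---follows; there is nothing to ``promote'' from a sequence of times to all large times. The detour through Proposition~\ref{quesi} to control re-expansion is not needed here (that estimate is already baked into the proof of Lemma~\ref{basico}). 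Drop the third paragraph and you have the paper's proof verbatim.
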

\begin{proof}
By contradiction suppose there exist $\epsilon_0>0,D_0\in (0,r_0]$ and sequences $[\eta_n],[\xi_n]\in X$, $t_n\in \mathbb{R}$, such that $t_n\to \infty$ and for every $n\geq 1$, $[\eta_n]\in V^s[\xi_n]$, $[\eta_n]$ belongs to the connected component of $V^s[\xi_n]\cap B([\xi_n],r_0)$ containing $[\xi_n]$,
\[  d([\eta_n],[\xi_n])\leq D_0\leq r_0 \quad\text{ and }\quad d(\psi_{t_n}[\eta_n],\psi_{t_n}[\xi_n])\geq \epsilon_0.  \]
This contradicts Lemma \ref{basico} and proves the statement.
\end{proof}
As an immediate consequence we see that $V^s[\eta]$ and $V^u[\eta]$ agree with the strong sets of $\psi_t$ locally for distances smaller than $r_0$.
\begin{lem}\label{strong}
Let $r_0>0$ be given by Lemma \ref{lev}. If $[\eta]\in V^s[\xi]$, $[\eta]$ belongs to the connected component of $V^s[\xi]\cap B([\xi],r_0)$ containing $[\xi]$ and $d([\eta],[\xi])\leq r_0$ then
\[   d(\psi_t[\eta],\psi_t[\xi])\to 0 \quad\text{ as }\quad t\to \infty.  \]
In particular, $[\eta]\in W^{ss}[\xi]$. An analogous statement holds for the unstable case.
\end{lem}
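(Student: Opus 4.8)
The plan is to read off the statement directly from the uniform contraction established in Lemma \ref{last}. The crucial feature of that lemma is that the threshold time $T$ depends only on $\epsilon$ and on the bound $D$ for the initial distance, and not on the particular pair $[\eta],[\xi]$; this uniformity is exactly what upgrades the ``eventually within $\epsilon$'' conclusion into genuine convergence to $0$.

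Concretely, I would argue as follows. Fix $[\eta],[\xi]\in X$ satisfying the hypotheses: $[\eta]\in V^s[\xi]$, $[\eta]$ lies in the connected component of $V^s[\xi]\cap B([\xi],r_0)$ containing $[\xi]$, and $d([\eta],[\xi])\leq r_0$. Let $\epsilon>0$ be arbitrary. Apply Lemma \ref{last} with this $\epsilon$ and with $D=r_0\in(0,r_0]$; since $d([\eta],[\xi])\leq r_0=D$, this yields $T=T(\epsilon)>0$ such that $d(\psi_t[\eta],\psi_t[\xi])\leq\epsilon$ for every $t\geq T$. As $\epsilon>0$ was arbitrary, we conclude $d(\psi_t[\eta],\psi_t[\xi])\to 0$ as $t\to\infty$. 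By the definition of the strong stable set recalled in Subsection \ref{p5}, this convergence is precisely the assertion $[\eta]\in W^{ss}[\xi]$. The unstable case follows verbatim from the analogous conclusion of Lemma \ref{last}, replacing $t\to\infty$ by $t\to-\infty$ throughout, giving $[\eta]\in W^{uu}[\xi]$.

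I do not anticipate a genuine obstacle here, since all the substance was already carried out in Lemmas \ref{basico} and \ref{last}. The one point to keep an eye on is the order of quantifiers in Lemma \ref{last} — that $T$ is uniform over all admissible pairs with initial distance at most $D$ — which is exactly what allows a single application per $\epsilon$ rather than an iteration along the orbit, so that we never have to check whether $\psi_t[\eta]$ remains in the relevant connected component as $t$ grows.
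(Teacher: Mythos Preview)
Your proposal is correct and is essentially the same argument as the paper's: both apply Lemma~\ref{last} with $D=r_0$ and let $\epsilon$ shrink (the paper takes $\epsilon_n=1/n$, you take $\epsilon$ arbitrary), obtaining times $T(\epsilon)$ after which the orbits stay $\epsilon$-close, which is exactly the convergence $d(\psi_t[\eta],\psi_t[\xi])\to 0$. Your remark about the uniformity of $T$ over admissible pairs is a nice clarification but is not needed for the argument, since here the pair $[\eta],[\xi]$ is fixed throughout.
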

\begin{proof}
For every $n\geq 1$, set $\epsilon_n=1/n$ and $D=r_0$ in Lemma \ref{last}. So, there exists a sequence $T_n\to \infty$ such that $d(\psi_t[\eta], \psi_t[\xi])\leq \frac{1}{n}$ for every $t\geq T_n$. This implies that $d(\psi_t[\eta], \psi_t[\xi])\to 0$ as $t\to \infty$.
\end{proof}
The local product requires not only the intersection of $W^{ss}[\eta]$ and $W^{uu}[\eta]$ but the intersection of the $\epsilon$-strong sets $W^{ss}_{\epsilon}[\eta]$ and $W^{uu}_{\epsilon}[\eta]$. The following lemma sets a criterion to identify points of $W^{ss}_{\epsilon}[\eta]$ and $W^{uu}_{\epsilon}[\eta]$. 
\begin{lem}\label{veci}
Let $r_0>0$ be given by Lemma \ref{lev}. For every $\epsilon>0$ there exists $\delta\in (0,r_0]$ such that if $[\eta]\in V^s[\xi]$, $[\eta]$ belongs to the connected component of $V^s[\xi]\cap B([\xi],r_0)$ containing $[\xi]$ and $d([\eta],[\xi])\leq \delta$, then
\[   d(\psi_t[\eta],\psi_t[\xi])\leq \epsilon \quad\text{ for every } t\geq 0. \]
An analogous result holds for the unstable case.
\end{lem}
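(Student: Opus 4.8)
The plan is to deduce Lemma \ref{veci} as a quantitative refinement of the two preceding lemmas, combining the asymptotic contraction of Lemma \ref{last} with a uniform continuity / compactness argument on a short initial time interval. The key point is that Lemma \ref{strong} already gives $d(\psi_t[\eta],\psi_t[\xi])\to 0$ as $t\to\infty$ whenever $[\eta]$ lies in the connected component of $V^s[\xi]\cap B([\xi],r_0)$ containing $[\xi]$ with $d([\eta],[\xi])\le r_0$; what is missing is control of the \emph{overshoot} of the orbit distance on a bounded time window before the contraction takes over.

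First I would fix $\epsilon>0$ and apply Lemma \ref{last} with $D=r_0$ to obtain $T=T(\epsilon)>0$ such that $d(\psi_t[\eta],\psi_t[\xi])\le\epsilon$ for all $t\ge T$, valid for every admissible pair $[\eta]\in V^s[\xi]$ (in the right connected component) with $d([\eta],[\xi])\le r_0$. It then remains to control the finite window $t\in[0,T]$. For this I would use uniform continuity of the flow: the map $(t,[\xi],[\eta])\mapsto d(\psi_t[\eta],\psi_t[\xi])$ is continuous on the compact set $[0,T]\times X\times X$, and it vanishes on the diagonal $[0,T]\times\{([\xi],[\xi])\}$. Hence there exists $\delta'>0$ such that $d([\eta],[\xi])\le\delta'$ forces $d(\psi_t[\eta],\psi_t[\xi])\le\epsilon$ for all $t\in[0,T]$. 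Setting $\delta=\min(\delta',r_0)$, any admissible pair with $d([\eta],[\xi])\le\delta$ satisfies the bound on $[0,T]$ by uniform continuity and on $[T,\infty)$ by Lemma \ref{last}, which is exactly the claim. The unstable case is identical after reversing time.

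One subtlety to address carefully is that the hypothesis on $[\eta]$ lying in the connected component of $V^s[\xi]\cap B([\xi],r_0)$ containing $[\xi]$ must be preserved when we shrink from $r_0$ to $\delta\le r_0$: since $B([\xi],\delta)\subset B([\xi],r_0)$, the component of $V^s[\xi]\cap B([\xi],\delta)$ through $[\xi]$ sits inside the component of $V^s[\xi]\cap B([\xi],r_0)$ through $[\xi]$, so the hypotheses of Lemma \ref{last} are indeed met with $D=r_0$ (or with $D=\delta$, either works). The main obstacle — if there is one — is purely bookkeeping: making sure the constant $T$ from Lemma \ref{last} is genuinely uniform over all admissible base points $[\xi]$ (which it is, as that lemma is stated for all $[\xi]$ simultaneously), so that the subsequent compactness argument on $[0,T]\times X\times X$ is legitimate. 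Everything else is a routine $\epsilon$--$\delta$ splicing of a bounded transient regime with an asymptotically contracting tail.
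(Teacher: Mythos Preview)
Your argument is correct and follows essentially the same logic as the paper's proof: both split the time axis into a bounded initial window (handled by continuity/compactness of the flow on $[0,T]\times X\times X$) and an unbounded tail (handled by the asymptotic contraction coming from Lemma~\ref{basico}/Lemma~\ref{last}). The only cosmetic difference is that the paper packages this as a contradiction argument citing Lemma~\ref{basico} directly, whereas you give the direct version invoking Lemma~\ref{last}; the content is the same.
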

\begin{proof}
By contradiction suppose there exist $\epsilon_0>0$ and sequences $[\eta_n],[\xi_n]\in X$, $\delta_n\in(0,r_0]$ and $t_n\in \mathbb{R}$, such that $\delta_n\to 0$ and for every $n\geq 1$, $[\eta_n]\in V^s[\xi_n]$, $[\eta_n]$ belongs to the connected component of $V^s[\xi_n]\cap B([\xi_n],r_0)$ containing $[\xi_n]$,  
\begin{equation}\label{sta2}
    d([\eta_n],[\xi_n])\leq \delta_n \leq r_0 \quad\text{ and }\quad d(\psi_{t_n}[\eta_n],\psi_{t_n}[\xi_n])\geq \epsilon_0.
\end{equation}
We claim that $t_n\to \infty$. Otherwise $t_n$ is bounded and after choosing a subsequence we have $t_n\to T\in \mathbb{R}$. For suitable subsequences, inequalities \eqref{sta2} imply that $[\eta_n]$ and $[\xi_n]$ converge to the same limit $[\eta]\in X$. Since $\psi_t$ is continuous, $\psi_{t_n}[\eta_n]$ and $\psi_{t_n}[\xi_n]$ converge to the same limit $\psi_T[\eta]$. This contradicts inequalities \eqref{sta2} and proves the claim. Since $t_n\to\infty$, inequalities \eqref{sta2} contradict Lemma \ref{basico} and prove the lemma.
\end{proof}
From this and Lemma \ref{strong}, we deduce that $W^{ss}_{\epsilon}[\eta]$ and $W^{uu}_{\epsilon}[\eta]$ agree with $V^s[\eta]$ and $V^u[\eta]$ locally. The following lemma states that if $[\eta]$ and $[\xi]$ are close enough then their intersection $[\theta]$ is close to $[\eta]$ and $[\xi]$.
\begin{lem}\label{inters}
For every $\epsilon>0$ there exists $\delta>0$ such that if $[\eta],[\xi]\in X$, $[\theta]\in V^s[\eta]\cap V^u(\psi_{\tau}[\xi])$ and $d([\eta],[\xi])\leq \delta$, then 
\[  d([\theta],[\eta])\leq \epsilon,\quad d([\theta],\psi_{\tau}[\xi])\leq \epsilon \quad\text{ and }\quad |\tau|\leq \epsilon. \]
\end{lem}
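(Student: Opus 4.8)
The plan is to argue by contradiction and compactness, mirroring the scheme of Lemmas~\ref{est1}, \ref{basico} and \ref{veci}, and to transfer the problem to $T_1\tilde{M}$ where the horospherical foliations are controlled. Suppose the statement fails: there are $\epsilon_0>0$ and sequences $[\eta_n],[\xi_n]\in X$, $\tau_n\in\mathbb{R}$ and $[\theta_n]\in V^s[\eta_n]\cap V^u(\psi_{\tau_n}[\xi_n])$ with $d([\eta_n],[\xi_n])\le 1/n$, yet for every $n$ at least one of $d([\theta_n],[\eta_n])$, $d([\theta_n],\psi_{\tau_n}[\xi_n])$, $|\tau_n|$ exceeds $\epsilon_0$. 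Passing to a subsequence we get $[\eta_n]\to[\eta]$ and, since $d([\eta_n],[\xi_n])\to 0$, also $[\xi_n]\to[\eta]$.

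Next I would choose compatible lifts. Using covering isometries, fix lifts $\tilde{\eta}_n$ of $\eta_n$ with $\tilde{\eta}_n\to\tilde{\eta}$; by Lemma~\ref{lev} and the even-covering argument used there, we may pick lifts $\tilde{\xi}_n$ of $\xi_n$ with $d_s(\tilde{\eta}_n,\tilde{\xi}_n)\to 0$, hence $\tilde{\xi}_n\to\tilde{\eta}$. Since $[\theta_n]\in V^s[\eta_n]\cap V^u(\psi_{\tau_n}[\xi_n])$ by hypothesis, the computation preceding the lemma provides representatives $\tilde{\theta}_n$ with
\[ \tilde{\theta}_n\in\mathcal{\tilde{F}}^s(\tilde{\eta}_n)\cap\mathcal{\tilde{F}}^u(\tilde{\phi}_{\tau_n}(\tilde{\xi}_n)) \qquad\text{and}\qquad \chi\circ d\pi(\tilde{\theta}_n)=[\theta_n]. \]

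The crucial point --- and the step I expect to be the main obstacle --- is to confine $\tilde{\theta}_n$ to a compact set and to bound $\tau_n$, so that a further subsequence yields $\tilde{\theta}_n\to\tilde{\theta}_\infty$ and $\tau_n\to\tau_\infty$. I would obtain this from the flow-box description of Section~\ref{s3}: choose a flow box $A=A(\tilde{\eta},\epsilon',\delta',\tau')$ around $\mathcal{\tilde{I}}(\tilde{\eta})$ with $\tau'<\epsilon_0$ and with $\Pi\circ\tilde{\chi}(A)$ of $d$-diameter less than $\epsilon_0$; for $n$ large both $\tilde{\eta}_n$ and $\tilde{\xi}_n$ lie in $A$, and since the local stable and unstable leaves inside the cross section $\Sigma$ already meet inside $A$, the heteroclinic point realizing $[\theta_n]$ lies in $A=\bigcup_{|t|<\tau'}\tilde{\phi}_t(U)$, which pins $\tilde{\theta}_n$ into a fixed compact set and forces $|\tau_n|<\tau'<\epsilon_0$. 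Alternatively, one bounds $\tilde{\theta}_n$ directly using $Diam(\mathcal{\tilde{I}}(\tilde{\theta}_n))\le\tilde{Q}$ from Proposition~\ref{tops} together with a compactness argument in the style of Lemma~\ref{basico}.

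Finally I would pass to the limit. Continuity of the foliations $\mathcal{\tilde{F}}^s,\mathcal{\tilde{F}}^u$ (Proposition~\ref{h1}) and of the geodesic flow give $\tilde{\theta}_\infty\in\mathcal{\tilde{F}}^s(\tilde{\eta})\cap\mathcal{\tilde{F}}^u(\tilde{\phi}_{\tau_\infty}(\tilde{\eta}))$, so $\gamma_{\tilde{\theta}_\infty}$ is bi-asymptotic to $\gamma_{\tilde{\eta}}$ and Corollary~\ref{key1} gives $\tilde{\theta}_\infty\in\mathcal{\tilde{I}}(\tilde{\eta})\subset\mathcal{\tilde{F}}^u(\tilde{\eta})$. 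Since $\mathcal{\tilde{F}}^u$ is a foliation and the leaves $\mathcal{\tilde{F}}^u(\tilde{\phi}_{\tau_\infty}(\tilde{\eta}))$ and $\mathcal{\tilde{F}}^u(\tilde{\eta})$ share the point $\tilde{\theta}_\infty$, they coincide; as $\tilde{\phi}_{\tau_\infty}(\tilde{\eta})$ lies on $\gamma_{\tilde{\eta}}$ and belongs to the unstable horocycle of $\tilde{\eta}$ only when $\tau_\infty=0$, this forces $\tau_\infty=0$. Applying $\chi\circ d\pi$ and using continuity of $\psi_t$ then gives $[\theta_n]\to[\eta]$, $\psi_{\tau_n}[\xi_n]\to[\eta]$ and $\tau_n\to 0$, so for $n$ large all three quantities $d([\theta_n],[\eta_n])$, $d([\theta_n],\psi_{\tau_n}[\xi_n])$, $|\tau_n|$ are below $\epsilon_0$, contradicting the choice of the sequences. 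Beyond the confinement step, everything is a routine limiting argument.
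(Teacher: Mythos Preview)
Your proposal is correct and follows the same contradiction-and-lifting scheme as the paper: negate the conclusion, lift to $T_1\tilde{M}$ via Lemma~\ref{lev}, secure $\tilde{\theta}_n\in\mathcal{\tilde{F}}^s(\tilde{\eta}_n)\cap\mathcal{\tilde{F}}^u(\tilde{\phi}_{\tau_n}(\tilde{\xi}_n))$, pass to limits, and apply Corollary~\ref{key1} to force $\tilde{\theta}_\infty\in\mathcal{\tilde{I}}(\tilde{\eta})$ and $\tau_\infty=0$. The only divergence is precisely the confinement step you flagged as the obstacle. The paper does not use the flow boxes of Section~\ref{s3}; instead it picks $\tilde{\theta}_n$ in the fundamental domain containing $\tilde{\eta}_n,\tilde{\xi}_n$ (so compactness is automatic) and then argues from the connected-component definition~\eqref{intev} that this particular lift already lies on the correct leaves, while the boundedness of $\tau_n$ is treated tersely and is really implicit in the locality built into~\eqref{intev}. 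Your flow-box route makes both the confinement of $\tilde{\theta}_n$ and the bound $|\tau_n|<\tau'$ explicit in one stroke, which is arguably cleaner; the paper's fundamental-domain argument is shorter to write but leaves more to the reader. The concluding limit argument is identical in both.
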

\begin{proof}
By contradiction suppose there exist $\epsilon_0>0$ and sequences $[\eta_n],[\xi_n],[\theta_n]\in X$, $\tau_n\in \mathbb{R}$ such that for every $n\geq 1$, $[\theta_n]\in V^s[\eta_n]\cap V^u(\psi_{\tau_n}[\xi_n])$, $|\tau_n|\geq \epsilon_0$,
\begin{equation}\label{sta3}
    d([\eta_n],[\xi_n])\leq \frac{1}{n}, \quad d([\theta_n],[\eta_n])\geq \epsilon_0 \quad \text{ and }\quad d([\theta_n],\psi_{\tau_n}[\xi_n])\geq \epsilon_0.
\end{equation}
Given $r_0>0$ from Lemma \ref{lev}, for every $n\geq 1$ large enough, $d([\eta_n],[\xi_n])\leq \frac{1}{n}\leq r_0$. So, we can choose lifts $\tilde{\eta}_n,\tilde{\xi}_n,\tilde{\theta}_n$ of $\eta_n,\xi_n, \theta_n$ such that $d_s(\tilde{\eta}_n,\tilde{\xi}_n)\leq Q+s_0$ and $\tilde{\theta}_n$ belongs to the fundamental domain containing $\tilde{\eta}_n$ and $\tilde{\xi}_n$. We claim that for every $n\geq 1$,
\begin{equation}\label{inte1}
\tilde{\theta}_n\in \mathcal{\tilde{F}}^s(\tilde{\eta}_n)\cap \mathcal{\tilde{F}}^u(\phi_{\tau_n}(\tilde{\xi}_n)).
\end{equation}
Otherwise there exist sequences of covering isometries $T_n,T'_n$ such that $\tilde{\theta}_n\in \mathcal{\tilde{F}}^s(T_n(\tilde{\eta}_n))\cap \mathcal{\tilde{F}}^u(\phi_{\tau_n}(T'_n(\tilde{\xi}_n)))$. Thus, there exists an open set $U\subset X$ containing $[\eta_n]$ such that $[\theta_n]$ does not belong to the connected component of $V^s[\eta_n]\cap U$ containing $[\eta_n]$. Similarly, $[\theta_n]$ does not belong to the connected component of $V^{cu}[\xi_n]\cap U$ containing $[\xi_n]$. This contradicts the definition of intersection and proves the claim.

So, if we use the same covering isometries for all sequences and choose suitable subsequences, we can assume that $\tilde{\eta}_n \to \tilde{\eta}, \tilde{\xi}_n\to \tilde{\xi}, \tilde{\theta}_n\to \tilde{\theta}$ and $\tau_n\to T$. Since we used the same covering isometries for the sequences, the continuity of the horospherical foliations applied to relation \eqref{inte1} yields
\begin{equation}\label{inte2}
\tilde{\theta}\in \mathcal{\tilde{F}}^s(\tilde{\eta})\cap \mathcal{\tilde{F}}^u(\phi_T(\tilde{\xi})). 
\end{equation}
We claim that $\tilde{\eta}\in \mathcal{\tilde{I}}(\tilde{\xi})$. Otherwise $\eta\not\in \mathcal{I}(\xi)$ and $d([\eta],[\xi])>0$. But applying the limit to inequalities \eqref{sta3}, we get $d([\eta],[\xi])=0$ and the claim is proved. 

The claim and relation \eqref{inte2} provide that $\tilde{\theta}\in \mathcal{\tilde{F}}^s(\tilde{\eta})\cap \mathcal{\tilde{F}}^u(\phi_T(\tilde{\eta}))$. From this, Corollary \ref{key1} shows that $\tilde{\theta} \in \mathcal{\tilde{I}}(\tilde{\eta})$. Therefore $[\theta_n]$ and $[\eta_n]$ converge to the same limit $[\theta]=[\eta]$, which contradicts inequalities \eqref{sta3}. This gives $\delta_1>0$ such that $d([\theta],[\eta])\leq \epsilon$. A similar reasoning yields $\delta_2>0$ such that $d([\theta],\psi_{\tau}[\xi])\leq \epsilon$. 

Finally, we see that $\tilde{\theta} \in \mathcal{\tilde{I}}(\tilde{\eta})\subset \mathcal{\tilde{F}}^u(\tilde{\eta})$ hence $\tilde{\theta} \in \mathcal{\tilde{F}}^u(\tilde{\eta})\cap \mathcal{\tilde{F}}^u(\phi_T(\tilde{\eta}))$. This holds if and only if $T=0$. Therefore $\tau_n \to 0$, contradicting $|\tau_n|\geq \epsilon_0$. We thus get $\delta_3>0$ such that $|\tau|\leq \epsilon$. Choosing $\delta=\min(\delta_1,\delta_2,\delta_3)$ we get the result.
\end{proof}
This result is important because it grants the continuity of the local product structure. 
\begin{lem}
The quotient flow $\psi_t$ has a local product structure.
\end{lem}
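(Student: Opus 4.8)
The plan is to verify the definition of local product structure directly, using the heteroclinic-connection intersection $V^s[\eta]\cap V^u(\psi_\tau[\xi])=[\theta]$ constructed above as the candidate intersection point, and then invoking the lemmas already proved to control all the relevant distances. Fix $\epsilon>0$. I first want to shrink $\epsilon$ to an auxiliary value $\epsilon_1\in(0,r_0]$ that is small enough so that the conclusions of Lemma \ref{veci} apply with parameter $\epsilon_1$: this gives a $\delta_{veci}>0$ such that whenever $[a]\in V^s[b]$ lies in the connected component of $V^s[b]\cap B([b],r_0)$ through $[b]$ and $d([a],[b])\le\delta_{veci}$, then $d(\psi_t[a],\psi_t[b])\le\epsilon$ for all $t\ge 0$ (and the analogous unstable statement). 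Then I feed $\epsilon_1$ into Lemma \ref{inters} to get $\delta_{inters}>0$ controlling $d([\theta],[\eta])$, $d([\theta],\psi_\tau[\xi])$ and $|\tau|$. Finally set $\delta=\min(\delta_{inters},\delta_{veci},r_0)$, possibly shrinking further below.

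Now suppose $[\eta],[\xi]\in X$ with $d([\eta],[\xi])\le\delta$. Existence of the heteroclinic connection (Theorem \ref{v1}, Equation \eqref{heti}, as transcribed in the discussion preceding Lemma \ref{basico}) provides $\tau\in\mathbb{R}$ and a unique point $[\theta]\in V^s[\eta]\cap V^u(\psi_\tau[\xi])$; the genericity issue ($\tilde\xi\notin\mathcal{\tilde F}^{cu}(-\tilde\eta)$) is handled exactly as in the paragraph defining \eqref{intev}, choosing appropriate lifts. By Lemma \ref{inters}, $|\tau|\le\epsilon_1\le\epsilon$, and $d([\theta],[\eta])\le\epsilon_1$, $d([\theta],\psi_\tau[\xi])\le\epsilon_1$. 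Since $[\theta]\in V^s[\eta]$ and (by the choice of the connected component in the definition of the intersection) $[\theta]$ lies in the connected component of $V^s[\eta]\cap B([\eta],r_0)$ through $[\eta]$, and $d([\theta],[\eta])\le\epsilon_1\le\delta_{veci}$, Lemma \ref{veci} gives $d(\psi_t[\theta],\psi_t[\eta])\le\epsilon$ for all $t\ge 0$; combined with Lemma \ref{strong} this yields $[\theta]\in W^{ss}_\epsilon[\eta]$. Symmetrically, using the unstable version of Lemma \ref{veci} applied to $[\theta]\in V^u(\psi_\tau[\xi])$ with $d([\theta],\psi_\tau[\xi])\le\epsilon_1\le\delta_{veci}$, we get $d(\psi_t[\theta],\psi_t(\psi_\tau[\xi]))\le\epsilon$ for all $t\le 0$, so $[\theta]\in W^{uu}_\epsilon(\psi_\tau[\xi])$. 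Hence $[\theta]\in W^{ss}_\epsilon[\eta]\cap W^{uu}_\epsilon(\psi_\tau[\xi])$ with $|\tau|\le\epsilon$, which is exactly the required property, and uniqueness of $\tau$ follows from uniqueness of the heteroclinic intersection in the quotient together with Lemma \ref{est1} applied to shrink $\delta$ so that two admissible $\tau$'s cannot differ.

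The main obstacle I anticipate is not any single estimate — all the hard analytic work (the quasi-convexity of horocycles, the translation of Sasaki bounds to the quotient via Lemma \ref{lev}, the non-escape of connected components under covering isometries) is already packaged into Lemmas \ref{basico}, \ref{veci}, \ref{strong}, \ref{inters}. The subtle point is bookkeeping about connected components and lifts: one must make sure that the point $[\theta]$ furnished by the heteroclinic connection genuinely lies in the \emph{local} connected component of $V^s[\eta]$ (respectively $V^u(\psi_\tau[\xi])$) through the base point, rather than in some far-away sheet, since Lemmas \ref{veci} and \ref{strong} are only stated for that local component within $B(\cdot,r_0)$. This is why one shrinks $\delta$ enough that $d([\theta],[\eta])\le\epsilon_1\le r_0$ and $d([\theta],\psi_\tau[\xi])\le\epsilon_1\le r_0$, and invokes the argument (as in the proof of Lemma \ref{basico}, the covering-isometry/connected-component claim) guaranteeing the lifted intersection point sits in the same fundamental domain; once this is in place the rest is a direct chase through the definitions.
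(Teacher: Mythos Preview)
Your proposal is correct and follows essentially the same route as the paper: apply Lemma~\ref{veci} with the given $\epsilon$ to obtain a threshold $\delta_1$ (your $\delta_{veci}$), feed a suitable $\delta_m\le\min(\delta_1,\epsilon,r_0)$ into Lemma~\ref{inters} to produce the final $\delta$, and then read off $[\theta]\in W^{ss}_\epsilon[\eta]\cap W^{uu}_\epsilon(\psi_\tau[\xi])$ via Lemmas~\ref{strong} and~\ref{veci}; the paper uses Lemma~\ref{exp1} where you invoke Lemma~\ref{est1} for uniqueness of $\tau$, which is equivalent. Your exposition of the order in which $\epsilon_1$ and $\delta_{veci}$ are chosen is slightly tangled (you need $\epsilon_1\le\delta_{veci}$, so $\delta_{veci}$ must be fixed first from the original~$\epsilon$), but the underlying argument is the paper's.
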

\begin{proof}
Let $r_0>0$ be given by Lemma \ref{lev} and $\epsilon\in(0,r_0]$. Consider $[\eta],[\xi]\in X$ and $[\theta]\in V^s[\eta]\cap V^u(\psi_{\tau}[\xi])$. By Lemma \ref{veci} there exists $\delta_1>0$ such that if $d([\theta],[\eta])\leq \delta_1$ and $d([\theta],\psi_{\tau}[\xi])\leq \delta_1$ then
\begin{equation}\label{veci1}
d(\psi_t[\theta],\psi_t[\eta])\leq \epsilon \quad\text{ and }\quad d(\psi_{-t}[\theta],\psi_{-t}\psi_{\tau}[\xi])\leq \epsilon, \quad \text{for} t\geq 0.
\end{equation}
For the same $\epsilon>0$, Lemma \ref{exp1} gives $\delta_2>0$ such that expansivity holds. Set $\delta_m=\min(\delta_1,\delta_2,\epsilon)$. By Lemma \ref{inters}, for $\delta_m>0$ there exists $\delta>0$ such that if $[\eta],[\xi]\in X$, $[\theta]\in V^s[\eta]\cap V^u(\psi_{\tau}[\xi])$ and $d([\eta],[\xi])\leq \delta$ then
\[ d([\theta],[\eta])\leq \delta_m,\quad d([\theta],\psi_{\tau}[\xi])\leq \delta_m \quad\text{ and }\quad |\tau|\leq \delta_m\leq \epsilon. \]
From this and $\delta_m\leq \epsilon\leq r_0$, Lemma \ref{strong} implies that $[\theta]\in W^{ss}[\eta]\cap W^{uu}(\psi_{\tau}[\xi])$. Furthermore, since $\delta_m\leq \delta_1$, $[\theta],[\eta],[\xi]$ satisfy inequalities \eqref{veci1} hence $[\theta]\in W^{ss}_{\epsilon}[\eta]\cap W^{uu}_{\epsilon}(\psi_{\tau}[\xi])$ and $|\tau|\leq \epsilon$.
\end{proof}
Finally, pseudo-orbit tracing and specification properties are a consequences of previous dynamical properties. More precisely, 
\begin{enumerate}
    \item By Theorem 7.1 of \cite{thom91}, if $\psi_t$ is expansive and has a local product structure then $\psi_t$ has the pseudo-orbit tracing property.
    \item By Proposition 6.2 of \cite{gelf19}, if $\psi_t$ is expansive, topological mixing and has the pseudo-orbit tracing property then $\psi_t$ has the specification property.
\end{enumerate}

\section{Uniqueness of the measure of maximal entropy of the geodesic flow}
This section is devoted to the study of the uniqueness of the measure of maximal entropy of the geodesic flow. The existence of such a measure follows from a work by Newhouse \cite{new89}. Indeed, the result says that a smooth flow on a compact smooth manifold always has a measure of maximal entropy. By hypothesis, the geodesic flow $\phi_t$ is a smooth flow acting on $T_1M$ and the result follows. We remark that in our case, the geodesic flow has positive topological entropy \cite{freire82}. 

The strategy for the proof of uniqueness of the measure of maximal entropy is the following. First of all, the properties of the factor flow implies that $\psi_t$ has a unique mesaure of maximal entropy. Secondly, we will show that the lift of this measure to $T_1M$ is the unique measure of maximal entropy for the geodesic flow. 

Recall that the quotient model is a quotient flow $\psi_t$ time-preserving semi-conjugate to the geodesic flow $\phi_t$. Bowen and Franco found a criterion to get the uniqueness of the measure of maximal entropy \cite{bowen71endo,fran77}.
\begin{teo}\label{frank}
Let $\phi_t:X\to X$ be a continuous flow acting on a compact metric space. If $\phi_t$ is expansive and has the specification property then $\phi_t$ has a unique measure of maximal entropy.
\end{teo}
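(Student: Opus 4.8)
The plan is to follow the classical argument of Bowen and Franco: build a distinguished measure of maximal entropy out of separated sets, use the specification property to show that it satisfies a Gibbs-type estimate on dynamical balls, and then invoke expansivity to conclude that every measure of maximal entropy must coincide with it.

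Fix $\epsilon>0$ below the expansivity constant. Expansivity makes the map $\nu\mapsto h_{\nu}(\phi)$ upper semicontinuous on the weak-star compact space of $\phi$-invariant probabilities, so a measure of maximal entropy exists and it suffices to treat ergodic ones; moreover $h(\phi)$ equals the exponential growth rate of maximal $(T,\epsilon)$-separated sets. For each $T>0$ choose a maximal $(T,\epsilon)$-separated set $E_{T}$ and form the empirical measures
\[
\mu_{T}=\frac{1}{\# E_{T}}\sum_{x\in E_{T}}\frac{1}{T}\int_{0}^{T}(\phi_{s})_{*}\delta_{x}\,ds .
\]
Let $\mu$ be a weak-star limit point of $(\mu_{T})$ along some $T_{k}\to\infty$. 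The usual argument (spreading the $\mu_{T}$ over a finite partition whose atoms have diameter less than $\epsilon$) shows that $\mu$ is $\phi$-invariant and $h_{\mu}(\phi)\ge h(\phi)$, hence $h_{\mu}(\phi)=h(\phi)$ by the variational principle.

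The core step is to use specification to promote $\mu$ to a Gibbs state: there should exist $K\ge 1$ with
\[
K^{-1}e^{-h(\phi)T}\le\mu\big(B(x,\epsilon,T)\big)\le K e^{-h(\phi)T}\qquad\text{for every }x\in X,\ T>0 .
\]
The upper bound comes from the quasi-disjointness of the balls $B(x,\epsilon,T)$ with $(T,2\epsilon)$-separated centres, together with the estimate $\# E_{T}\approx e^{h(\phi)T}$. For the lower bound one uses specification with gap $M=M(\epsilon/2)$: every long orbit segment can be preceded by a segment that $(\epsilon/2)$-shadows $\{\phi_{s}x:s\in[0,T]\}$, so a fixed exponential proportion of the points of $E_{S}$, $S\gg T$, land (up to the bounded time gap $M$) inside $B(x,\epsilon,T)$; letting $\mu_{S}\to\mu$ yields the inequality. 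I expect this to be the main obstacle, chiefly because of the bookkeeping of reparametrizations and time gaps forced by the flow setting — precisely the point at which Franco's treatment departs from Bowen's for diffeomorphisms.

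For uniqueness I would first observe that the Gibbs estimate forces $\mu$ to be ergodic, and then take an arbitrary ergodic measure of maximal entropy $\nu$. By the Brin--Katok local entropy formula, together with expansivity (which makes $-\tfrac{1}{T}\log\nu(B(x,\epsilon,T))$ already converge to $h_{\nu}(\phi)$ for the fixed $\epsilon$), one gets $\nu\big(B(x,\epsilon,T)\big)=e^{-h(\phi)T+o(T)}$ for $\nu$-almost every $x$. Comparing this with the two-sided Gibbs bound for $\mu$ and running a Vitali-type covering argument along dynamical balls — legitimate because expansivity guarantees that the two-sided dynamical balls shrink to short orbit arcs and hence form a generating family — shows that $\nu$ and $\mu$ are mutually absolutely continuous, whence $\nu=\mu$ by ergodicity. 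Since entropy is affine, almost every ergodic component of any measure of maximal entropy is again one, so $\mu$ is the unique measure of maximal entropy of $\phi$.
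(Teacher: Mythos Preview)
The paper does not give its own proof of this theorem; it simply records it as the classical result of Bowen and Franco and cites \cite{bowen71endo,fran77}. Your sketch is precisely an outline of that classical argument (construct a candidate via separated sets, upgrade it to a Gibbs state using specification, and deduce uniqueness from expansivity), so you are reproducing exactly what the paper defers to rather than diverging from it.
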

From the previous section, Theorem \ref{propd} says that $\psi_t$ is expansive and has the specification property. Applying Theorem \ref{frank} to our case we see that $\psi$ has a unique measure of maximal entropy $\nu$.

To lift $\nu$ to $T_1M$ and verify the uniqueness property we rely on a abstract theorem proved by Buzzi-Fisher-Sambarino-Vasquez for discrete systems \cite{buzzi12}. They constructed a measure of maximal entropy using a classical argument due to Ledrappier and Walters \cite{ledra77}. We recall the construction for our setting. Let $\phi_t:Y\to Y$ and $\psi_t:X\to X$ be two continuous flows on compact metric spaces, $\chi:Y\to X$ be a time-preserving semi-conjugacy and $\nu$ be the measure of maximal entropy of $\psi_t$. Assume that $\psi_t$ is expansive, has the specification property and for every $x\in X$,
\begin{equation}\label{hip}
    h(\phi_1,\chi^{-1}(x))=0.
\end{equation}
Let $\epsilon>0$ be an expansivity constant for $\psi_t$. To prepare the ground for applying Buzzi-Fisher-Sambarino-Vasquez Theorem we let $Y=T_1M$, $\phi_t$ be the geodesic flow, $X$ be the quotient space, $\psi_t$ be the quotient flow and $\chi$ be the quotient map provided by Theorem \ref{main1}. For each $T>0$, we define the set
\[  Per(T,\epsilon)=\{ \chi^{-1}(\gamma)\subset Y: \gamma \text{ is a periodic orbit of }\psi_t \text{ with period in }[T-\epsilon,T+\epsilon] \}.\]
By expansivity this set is finite. The following lemma states a non-trivial fact about strips in our setting.  
\begin{lem}\label{notri}
Let $M$ be a compact surface without conjugate points of genus greater than one and $Per(T,\epsilon)$ be the set defined above. Then, every $\chi^{-1}(\gamma)\in Per(T,\epsilon)$ is a compact set invariant by the geodesic flow $\phi_t$. Moreover, if $S\in[T-\epsilon,T+\epsilon]$ is the period of $\gamma$, for any $\xi\in \chi^{-1}(\gamma)$ we have
\[  \chi^{-1}(\gamma)= \bigcup_{t\in [0,S]} \phi_t(\mathcal{I}(\xi)) = \phi_{[0,S]}(\mathcal{I}(\xi)). \]
In particular, its lift $\tilde{\phi}_{[0,S]}(\mathcal{\Tilde{I}}(\Tilde{\xi}))\subset T_1\Tilde{M}$ is a strip of bi-asymptotic orbits of the geodesic flow $\Tilde{\phi}_t$ for any lift $\tilde{\xi}\in T_1\Tilde{M}$ of $\xi$.
\end{lem}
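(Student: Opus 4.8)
The plan is to characterize $\chi^{-1}(\gamma)$ explicitly for a periodic orbit $\gamma$ of $\psi_t$, and then show that its preimage under $\chi$ is exactly a flow-saturated union of equivalence classes along one closed orbit. First I would fix a periodic orbit $\gamma$ of $\psi_t$ with period $S \in [T-\epsilon, T+\epsilon]$, and pick any $[\xi] \in \gamma$ with representative $\xi \in T_1M$. Since $\gamma$ is $\psi_t$-invariant and $\chi$ is a time-preserving semi-conjugacy, $\chi^{-1}(\gamma)$ is automatically $\phi_t$-invariant: if $\chi(\eta) \in \gamma$ then $\chi(\phi_t\eta) = \psi_t \chi(\eta) \in \gamma$. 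Compactness of $\chi^{-1}(\gamma)$ follows because $\gamma$ is closed in $X$ (a periodic orbit of a continuous flow on a compact metric space is compact, hence closed) and $\chi$ is continuous, so $\chi^{-1}(\gamma)$ is a closed subset of the compact space $T_1M$.

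Next I would identify $\chi^{-1}(\gamma)$ set-theoretically. For $\eta \in \chi^{-1}(\gamma)$ we have $[\eta] \in \gamma = \{\psi_s[\xi] : s \in [0,S]\}$, so $[\eta] = \psi_s[\xi] = [\phi_s(\xi)]$ for some $s \in [0,S]$; by Lemma \ref{char1} this means $\eta \in \mathcal{I}(\phi_s(\xi))$. Conversely, if $\eta \in \mathcal{I}(\phi_s(\xi))$ then $\eta \sim \phi_s(\xi)$, so $\chi(\eta) = [\phi_s(\xi)] = \psi_s[\xi] \in \gamma$. Hence
\[
\chi^{-1}(\gamma) = \bigcup_{s \in [0,S]} \mathcal{I}(\phi_s(\xi)) = \bigcup_{s \in [0,S]} \phi_s(\mathcal{I}(\xi)) = \phi_{[0,S]}(\mathcal{I}(\xi)),
\]
where the middle equality uses the $\phi_t$-equivariance of the equivalence relation, i.e.\ $\phi_s(\mathcal{I}(\xi)) = \mathcal{I}(\phi_s(\xi))$, which was established when $\psi_t$ was shown to be well-defined (invariance of $\mathcal{F}^s, \mathcal{F}^u$ under $\phi_t$). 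This also re-derives invariance directly and shows the set has the claimed form.

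For the statement about the lift, I would pass to the universal covering: choosing a lift $\tilde\xi \in T_1\tilde M$ of $\xi$, the set $\tilde\phi_{[0,S]}(\mathcal{\tilde I}(\tilde\xi))$ projects onto $\phi_{[0,S]}(\mathcal{I}(\xi))$ via $d\pi$, since $d\pi(\mathcal{\tilde I}(\tilde\xi)) = \mathcal{I}(\xi)$ and $d\pi$ intertwines $\tilde\phi_t$ with $\phi_t$. By Proposition \ref{tops}, $\mathcal{\tilde I}(\tilde\xi)$ is a compact connected curve of bi-asymptotic vectors (every $\eta \in \mathcal{\tilde I}(\tilde\xi)$ has $\gamma_\eta$ bi-asymptotic to $\gamma_{\tilde\xi}$), so flowing it for time $[0,S]$ produces a strip of bi-asymptotic orbits of $\tilde\phi_t$, as required.

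I do not expect a serious obstacle here: the lemma is essentially a bookkeeping consequence of Lemma \ref{char1}, the equivariance of $\sim$, and the compactness/connectedness facts from Proposition \ref{tops}. The only point requiring a little care is confirming that a periodic orbit $\gamma$ of $\psi_t$ is closed (so that $\chi^{-1}(\gamma)$ is closed, hence compact) — this is immediate since $\gamma$ is the continuous image of the compact set $[0,S]$ under $s \mapsto \psi_s[\xi]$, and $X$ is Hausdorff — and double-checking that the parameter interval $[0,S]$ can be chosen with $S \in [T-\epsilon, T+\epsilon]$ equal to the period of $\gamma$, which is exactly the defining condition in $Per(T,\epsilon)$.
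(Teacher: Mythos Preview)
Your proposal is correct. The paper itself states Lemma~\ref{notri} without proof, treating it as a routine consequence of the definitions; your argument supplies exactly the bookkeeping the paper omits. The ingredients you invoke---continuity of $\chi$ and the semi-conjugacy for invariance and compactness, Lemma~\ref{char1} for the fiber identification $\chi^{-1}[\eta]=\mathcal{I}(\eta)$, the $\phi_t$-equivariance $\phi_s(\mathcal{I}(\xi))=\mathcal{I}(\phi_s(\xi))$, and Proposition~\ref{tops} for the bi-asymptotic strip structure upstairs---are precisely those available at this point in the paper, so there is nothing to compare: you have written out the proof the authors left implicit.
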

\begin{proof}
Since $\gamma\subset X$ is a periodic orbit of $\psi_t$ we see that $\gamma$ is a closed set. The continuity of $\chi$ implies that $\chi^{-1}(\gamma)$ is closed in $T_1M$. Therefore since $T_1M$ is compact so is $\chi^{-1}(\gamma)$. Next, using Equation \eqref{semi} and the periodicity of $\gamma$, for every $t\in \mathbb{R}$ we have
\[  \phi_{-t}\circ \chi^{-1}(\gamma)=(\phi_t)^{-1}\circ \chi^{-1}(\gamma)= \chi^{-1}\circ (\psi_t)^{-1}(\gamma)= \chi^{-1}\circ \psi_{-t}(\gamma) = \chi^{-1}(\gamma).  \]
Therefore $\chi^{-1}(\gamma)$ is invariant by $\phi_t$. Finally, for any $\xi\in \chi^{-1}(\gamma)$ we can write $\gamma=\bigcup_{t\in [-S,0]}\psi_{-t}(\chi(\xi))$. Using Equation \eqref{semi} again we get
\begin{align*}
\chi^{-1}(\gamma) &= \chi^{-1} \left( \bigcup_{t\in [-S,0]}\psi_{-t}(\chi(\xi)) \right) = \bigcup_{t\in [-S,0]}\chi^{-1}\circ \psi_{-t}\circ \chi(\xi) \\
   &= \bigcup_{t\in [-S,0]}\phi_{-t}\circ \chi^{-1} \circ \chi(\xi) =\bigcup_{t\in [0,S]}\phi_{t}(\mathcal{I}(\xi)) =\phi_{[0,S]}(\mathcal{I}(\xi)).
\end{align*}
\end{proof}
Observe that applying Brouwer fixed point Theorem to the restricted map $\phi_S:\mathcal{I}(\xi)\to \mathcal{I}(\xi)$ implies that $\chi^{-1}(\gamma)$ contains at least one periodic orbit (with period $S$). For the other remaining orbits in $\chi^{-1}(\gamma)$ we do not know a priori if they are actually periodic. However, for every non-periodic $\phi_t$-orbit $\beta\subset\chi^{-1}(\gamma)$, Lemma \ref{notri} implies that $\beta$ and its accumulation points remain in $\chi^{-1}(\gamma)$. These properties might help to understand the geometry of these particular strips in future studies. Note that for compact surfaces without focal points, the geometry of strips is well-understood due to the flat strip Theorem \cite{pesin77}.

It also follows from Lemma \ref{notri} that there exists a probability measure $\mu_{\gamma}$ supported on $\chi^{-1}(\gamma)$ and invariant by the geodesic flow $\phi_t$. So, we can take the average 
\[  \mu_T=\frac{\sum_{\gamma}\mu_{\gamma}}{\# Per(T,\epsilon)},  \]
where $\gamma$ varies according to $\chi^{-1}(\gamma)\in Per(T,\epsilon)$. We see that $\mu_T$ is a probability measure on $Y$ invariant by the flow $\phi_t$. Let $\mu\in \mathcal{M}(\phi)$ be an accumulation point of the set $(\mu_T)_{T>0}$ in the weak$^*$ topology. So, there exists a sequence $T_n\to \infty$ such that $\mu_{T_n}\to \mu$ weakly.

Notice that for every $\chi^{-1}(\gamma)\in Per(T,\epsilon)$, $\chi_*\mu_{\gamma}$ is a probability measure supported on $\gamma$ and invariant by the flow $\psi_t$. It follows that $\chi_*(\mu_{T_n})$ is a probability measure supported on the union of periodic orbits of $\psi_t$ with period in $[T-\epsilon,T+\epsilon]$. In this case, Bowen showed that $\chi_*(\mu_{T_n})\to \nu$ in the weak$^*$ topology \cite{bowenper}. The continuity of $\chi_*$ and $\mu_{T_n}\to \mu$ provide that $\chi_*\mu=\nu$.

We verify that $\mu$ is a measure of maximal entropy. Since $(Y,\phi_t,\mu)$ is an extension of $(X,\psi_t,\nu)$, we have $h_{\nu}(\psi_1)\leq h_{\mu}(\phi_1)$. Applying Bowen's formula \cite{bowen71endo} and assumption \eqref{hip}, we conclude that 
\[  h(\phi_1)\leq h(\psi_1)+\sup_{x\in X}h(\phi_1,\chi^{-1}(x))=h(\psi_1)\quad \text{ hence }\quad h(\phi_1)= h(\psi_1).   \]
Since $\nu$ is a measure of maximal entropy for $\psi_t$, $h(\phi_1)=h(\psi_1)=h_{\nu}(\psi_1)\leq h_{\mu}(\phi_1)$. So, every accumulation point $\mu\in \mathcal{M}(\phi)$ of the set $(\mu_T)_{T>0}$ satisfies:
\begin{equation}\label{par}
    \mu \text{ is a measure of maximal entropy for } \phi_t \quad \text{ and } \quad  \chi_*\mu=\nu.
\end{equation}
We state Buzzi-Fisher-Sambarino-Vasquez's Theorem for continuous systems. The proof is analogous to the discrete case with minor changes.
\begin{pro}\label{samba}
Let $\phi_t:Y\to Y$ and $\psi_t:X\to X$ be two continuous flows on compact metric spaces, $\chi:Y\to X$ be a time-preserving semi-conjugacy and $\nu$ be the measure of maximal entropy of $\psi_t$. Assume that $\psi_t$ is expansive, has the specification property and  
\begin{enumerate}
\item $h(\phi_1,\chi^{-1}(x))=0$ for every $x\in X$.
\item $\nu \bigg(\{ \chi(y): \chi^{-1}\circ\chi(y)=\{y\} \}\bigg)=1$.
\end{enumerate}
Then, there exists a unique measure of maximal entropy $\mu$ of $\phi_t$ with $\chi_*\mu=\nu$.
\end{pro}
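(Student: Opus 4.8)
Here is the approach I would take; the existence half is essentially already assembled in the text, so the real content is uniqueness.

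The plan is to reproduce the Ledrappier--Walters argument of \cite{ledra77,buzzi12} in the flow setting, reducing everything to the time-$1$ maps $\phi_1,\psi_1$. A measure of maximal entropy $\mu$ with $\chi_*\mu=\nu$ is already produced by the construction preceding \eqref{par}: averaging the flow-invariant probabilities $\mu_\gamma$ carried by the strips $\chi^{-1}(\gamma)\in Per(T,\epsilon)$ gives $\mu_T$, any weak$^*$ accumulation point $\mu$ of $(\mu_T)_{T>0}$ is flow-invariant, and \eqref{par} records that $\mu$ maximizes entropy and satisfies $\chi_*\mu=\nu$. (This half uses hypothesis (1), i.e.\ \eqref{hip}, together with the expansivity and specification of $\psi_t$, but not hypothesis (2).) So only uniqueness remains, and I would obtain it in two steps.

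\textbf{Step 1: every measure of maximal entropy of $\phi_t$ projects to $\nu$.} Let $\mu'$ be such a measure and put $\lambda=\chi_*\mu'$, a $\psi_t$-invariant probability. Bowen's relative entropy inequality for the factor $\chi$ between time-$1$ maps (the same one invoked before \eqref{par}, see \cite{bowen71endo}) gives $h_{\mu'}(\phi_1)\le h_{\lambda}(\psi_1)+\sup_{x\in X}h(\phi_1,\chi^{-1}(x))$, and hypothesis (1) makes the supremum vanish. Since $h(\phi_1)=h(\psi_1)$ was already established in the discussion around \eqref{hip}, we get $h(\psi_1)=h_{\mu'}(\phi_1)\le h_{\lambda}(\psi_1)\le h(\psi_1)$, so $h_\lambda(\psi_1)=h(\psi_1)$: $\lambda$ is a measure of maximal entropy for $\psi_t$. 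As $\psi_t$ is expansive and has the specification property, Theorem \ref{frank} forces $\lambda=\nu$.

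\textbf{Step 2: a measure of maximal entropy of $\phi_t$ is determined by its projection.} Put $X_0=\{x\in X:\chi^{-1}(x)\text{ is a single point}\}$ and $Y_0=\chi^{-1}(X_0)$. Because $Y$ and $X$ are compact metric, the fiber assignment $x\mapsto\chi^{-1}(x)$ is upper semicontinuous for the Hausdorff metric, so $x\mapsto\mathrm{diam}\,\chi^{-1}(x)$ is upper semicontinuous and $X_0$, hence $Y_0$, is Borel. Hypothesis (2) is precisely $\nu(X_0)=1$, so for the measure $\mu$ built above, and for any measure of maximal entropy $\mu'$ of $\phi_t$ (which by Step 1 has $\chi_*\mu'=\nu$), one gets $\mu(Y_0)=\mu'(Y_0)=1$. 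On $Y_0$ the map $\chi$ is injective, so $\chi|_{Y_0}\colon Y_0\to X_0$ is a Borel bijection between standard Borel spaces, and by the Lusin--Souslin theorem its inverse is Borel. Consequently, for every Borel $A\subseteq X_0$ one has $\mu'\big((\chi|_{Y_0})^{-1}(A)\big)=\mu'(\chi^{-1}(A))=\nu(A)$, which pins down $\mu'$ on $Y_0$ as $(\chi|_{Y_0})^{-1}_*\nu$; since $\mu'(Y\setminus Y_0)=0$ this determines $\mu'$ completely. Hence $\mu'=(\chi|_{Y_0})^{-1}_*\nu=\mu$, so $\mu$ is the unique measure of maximal entropy of $\phi_t$ and $\chi_*\mu=\nu$.

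The main obstacle is not conceptual but one of care: I must state and use the relative variational principle / Bowen inequality for \emph{continuous flows} (by passing to the time-$1$ maps and checking that the fiber entropies $h(\phi_1,\chi^{-1}(x))$ behave as in the discrete case), and I must justify the measurability claims --- that $X_0$ and $Y_0$ are Borel and that $\chi|_{Y_0}$ is a Borel isomorphism onto its image --- so that the identity $\mu'=(\chi|_{Y_0})^{-1}_*\nu$ is legitimate. These are exactly the ``minor changes'' relative to the discrete statement of \cite{buzzi12}, and none of this step uses the geometry of the geodesic flow.
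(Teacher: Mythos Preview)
Your argument is correct and is exactly what the paper intends: the paper does not give its own proof of Proposition~\ref{samba} but only states it and remarks that the proof is analogous to the discrete case in \cite{buzzi12}, and your two steps are precisely the Buzzi--Fisher--Sambarino--V\'asquez argument carried over to time-$1$ maps. One small point of attribution: the inequality $h_{\mu'}(\phi_1)\le h_{\lambda}(\psi_1)+\sup_{x\in X}h(\phi_1,\chi^{-1}(x))$ you use in Step~1 is the Ledrappier--Walters relativised variational principle \cite{ledra77} (for metric entropy), not Bowen's formula from \cite{bowen71endo} (which concerns topological entropy and is what is actually invoked before \eqref{par}); the substance of the step is unaffected.
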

We apply this proposition to our context. Let $Y=T_1M$, $\phi_t$ be the geodesic flow, $X$ be the quotient space, $\psi_t$ be the quotient flow, $\chi$ be the quotient map and $\nu$ be the unique measure of maximal entropy of $\psi_t$. With these choices, the assumptions of Proposition \ref{samba} are satisfied except for Hypothesis 1 and 2. Regarding Hypothesis 1, we see that for every $[\eta]\in X$, 
\begin{equation}\label{ayu}
    \chi^{-1}[\eta]=\chi^{-1}\circ \chi(\eta)=\mathcal{I}(\eta).
\end{equation}
For compact surfaces without conjugate points and genus greater than one, Gelfert and Ruggiero \cite{gelf20} proved that $h(\phi_1, \mathcal{I}(\eta))=0$ for every $\eta\in T_1M$. Therefore, Hypothesis 1 is satisfied. Moreover, condition \eqref{par} says that there exists a measure of maximal entropy $\mu$ for the geodesic flow $\phi_t$ such that $\chi_*\mu=\nu$. So, to show the uniqueness it only remains to prove Hypothesis 2.

We express Hypothesis 2 of Proposition \ref{samba} in our context. By identity \eqref{ayu}, this hypothesis has the following form
\[ \{ \chi(y): \chi^{-1}\circ \chi(y)=\{y\} \}= \{ \chi(\eta)\in X: \mathcal{I}(\eta)=\{\eta\} \}=\chi(\mathcal{R}_0). \]
Consequently, Hypothesis 2 becomes
\begin{equation}\label{ayu1}
\nu(\chi(\mathcal{R}_0))=1.
\end{equation}
To prove this condition, we use Proposition 3.3 of Climenhaga-Knieper-War's work \cite{clim21}. This proposition states a classical Katok's result in the context of geodesic flows of surfaces. 
\begin{lem}\label{ani}
Let $M$ be a surface without conjugate points of genus greater than one and $\mu$ be an ergodic measure on $T_1M$ invariant by the geodesic flow. 
\[  \text{If }\quad h_{\mu}(\phi_1)>0 \quad \text{ then }\quad \mu(\mathcal{R}_0)=1. \]
\end{lem}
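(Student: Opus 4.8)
\textbf{Proof proposal for Lemma \ref{ani}.}

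The plan is to reduce the statement to the known theory of non-uniformly hyperbolic (Pesin) systems for the geodesic flow of a surface without conjugate points, following the strategy that Climenhaga, Knieper and War use in \cite{clim21}. Let $\mu$ be an ergodic $\phi_t$-invariant probability measure on $T_1M$ with $h_\mu(\phi_1)>0$. By the Ruelle inequality the flow has a positive Lyapunov exponent $\mu$-almost everywhere, so by Katok's closing/shadowing theory in the Pesin-regular setting there is a compact hyperbolic set of positive topological entropy, and in particular a hyperbolic periodic orbit, in the support of $\mu$. The point I would emphasize is that for surfaces (two-dimensional $M$, hence three-dimensional $T_1M$) the center-unstable direction is one-dimensional, which forces the unstable horocycle leaf through a $\mu$-generic point $\theta$ to coincide with the Pesin unstable manifold; similarly for the stable side. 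Thus, $\mu$-almost every $\theta$ has the property that its strong stable and strong unstable manifolds are genuine (non-degenerate but locally one-dimensional) and transverse, which is exactly the obstruction to $\mathcal{F}^s(\theta)\cap\mathcal{F}^u(\theta)$ being a non-trivial arc.

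Concretely, the key steps in order are: (i) invoke the Oseledets theorem for $\phi_1$ with respect to $\mu$ and use $h_\mu(\phi_1)>0$ together with the Ruelle inequality to produce a positive Lyapunov exponent on a set of full $\mu$-measure; (ii) apply Katok's theorem (this is the content of Proposition 3.3 of \cite{clim21}, which is what the present lemma quotes) to identify, for $\mu$-a.e. $\theta$, the Pesin stable and unstable manifolds $W^{s}_{\mathrm{Pesin}}(\theta)$, $W^{u}_{\mathrm{Pesin}}(\theta)$ as nontrivial embedded curves contracted resp. expanded by the flow; (iii) show that $W^{u}_{\mathrm{Pesin}}(\theta)\subseteq \mathcal{F}^u(\theta)$ and $W^{s}_{\mathrm{Pesin}}(\theta)\subseteq \mathcal{F}^s(\theta)$ — this uses that forward-asymptotic geodesics lie on a common stable horosphere (by the very definition of the Busemann/horospherical foliation and Corollary \ref{key1}) and the analogous backward statement; (iv) conclude that if $\theta\notin\mathcal{R}_0$, i.e. $\mathcal{I}(\theta)=\mathcal{F}^s(\theta)\cap\mathcal{F}^u(\theta)$ contains a point $\eta\neq\theta$, then by Proposition \ref{tops} the geodesics $\gamma_\theta$ and $\gamma_\eta$ are bi-asymptotic, so $\eta$ lies simultaneously in $W^{s}_{\mathrm{Pesin}}(\theta)$ and $W^{u}_{\mathrm{Pesin}}(\theta)$; (v) observe that a point distinct from $\theta$ cannot lie in both the Pesin stable and unstable manifolds of $\theta$ because along such a point the flow would both contract and expand the separation $d_s(\phi_t(\theta),\phi_t(\eta))$ as $t\to\pm\infty$, forcing $d_s(\phi_t(\theta),\phi_t(\eta))\to 0$ in both time directions, which for a non-zero Lyapunov exponent is impossible unless $\eta=\theta$. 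Hence $\theta\in\mathcal{R}_0$ for $\mu$-a.e. $\theta$, giving $\mu(\mathcal{R}_0)=1$.

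I expect the main obstacle to be step (iii), the identification of the Pesin invariant manifolds with the horospherical leaves. In the Anosov (or continuous Green bundle) case this is standard, but without conjugate points one must argue that the Oseledets/Pesin unstable direction at a $\mu$-generic point is tangent to the unstable Green subbundle, and that the corresponding unstable manifold is contained in the unstable horosphere; the cleanest route is to use that positivity of the Lyapunov exponent implies the Green subbundles are transverse at $\mu$-a.e.\ point (no flat strip through that orbit in the Lyapunov-regular sense), combined with Eberlein's weak-contraction estimate (Proposition \ref{quesi}) run along the orbit to show that the Pesin-unstable curve through $\theta$ consists of points backward-asymptotic to $\gamma_\theta$, hence on $H^-(\theta)$. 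A secondary subtlety is that we only need the measure-theoretic conclusion, so we may freely discard $\mu$-null sets; this lets us avoid any global regularity of the Green bundles and work only along $\mu$-typical orbits, which is precisely why the statement holds for all surfaces without conjugate points of higher genus and not only under the extra hypotheses used elsewhere in the paper. Since the lemma is explicitly attributed to Proposition 3.3 of \cite{clim21}, in the write-up I would present steps (i)--(ii) as a citation and give full detail only for steps (iii)--(v), which are the part specific to the horocycle foliation.
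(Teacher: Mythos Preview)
The paper does not give its own proof of this lemma; it simply records it as Proposition~3.3 of \cite{clim21} and uses it as a black box. Your proposal goes further and sketches the underlying Pesin-theoretic argument, which is indeed the content of that reference, so on the level of approach you are aligned with what the paper invokes.

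Your outline is essentially correct, but step~(iv) as written has a gap. From $\eta\in\mathcal{I}(\theta)$ you only know that $\gamma_\eta$ and $\gamma_\theta$ stay a \emph{bounded} distance apart for all time (Proposition~\ref{tops}, Theorem~\ref{morse}); this does not by itself give the exponential convergence that membership in $W^{s}_{\mathrm{Pesin}}(\theta)$ or $W^{u}_{\mathrm{Pesin}}(\theta)$ requires. The inclusion you establish in step~(iii), namely $W^{s}_{\mathrm{Pesin}}(\theta)\subset\mathcal{F}^s(\theta)$, goes the wrong direction to deduce this. The repair is exactly the dimension/Green-bundle argument you mention at the end but relegate to step~(iii): since $W^{s}_{\mathrm{Pesin}}(\theta)$ is a nontrivial one-dimensional embedded curve contained in the one-dimensional curve $\mathcal{F}^s(\theta)$, the two must coincide on a neighborhood of $\theta$ in $\mathcal{F}^s(\theta)$, and likewise on the unstable side. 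Since $\mathcal{I}(\theta)$ is a connected arc through $\theta$, any nontrivial $\mathcal{I}(\theta)$ yields points $\eta\neq\theta$ arbitrarily close to $\theta$ lying in both local Pesin manifolds, contradicting their transversality (which is where the nonzero Lyapunov exponents enter). Equivalently, and perhaps more cleanly, argue that nonzero exponents force the stable and unstable Green subbundles to be transverse at $\mu$-a.e.\ $\theta$, and a nontrivial arc $\mathcal{I}(\theta)\subset\mathcal{F}^s(\theta)\cap\mathcal{F}^u(\theta)$ would force those tangent directions to coincide. Either route closes the gap; just make sure your write-up does not assert that bounded bi-asymptoticity alone places $\eta$ in the Pesin manifolds.
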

We prove below condition \eqref{ayu1} and so the uniqueness of the measure of maximal entropy for the geodesic flow $\phi_t$.
\begin{proof}
As remarked above, by condition \eqref{par}, $\mu$ is a measure of maximal entropy and hence $h_{\mu}(\phi_1)=h(\phi_1)>0$. Ergodic decomposition of $\mu$ provides an ergodic component $\tau$ with $h_{\tau}(\phi_1)>0$. Lemma \ref{ani} implies that $\tau(\mathcal{R}_0)=1$ hence $\mu(\mathcal{R}_0)>0$. So, we have 
\[  \nu(\chi(\mathcal{R}_0))=\chi_*\mu(\chi(\mathcal{R}_0))=\mu(\chi^{-1}\chi\mathcal{R}_0)=\mu(\mathcal{R}_0)>0. \]
Since $\nu$ is ergodic and $\chi(\mathcal{R}_0)$ is invariant by $\psi_t$, we get $\nu(\chi(\mathcal{R}_0))=1$.
\end{proof}
Finally, we remark that Climenhaga-Knieper-War \cite{clim21} also showed that the unique measure of maximal entropy has full support. This property can be proven by our methods assuming that the expansive set $\mathcal{R}_0$ is dense. For this, we first restate Proposition 7.3.15 of \cite{fish19} in our context.
\begin{pro}\label{gibbs}
Let $X$ be a compact metric space, $\psi_t:X\to X$ be a continuous expansive flow with the specification property and $\nu$ be its unique measure of maximal entropy. Then, for every $\epsilon>0$ there exist $A_{\epsilon},B_{\epsilon}>0$ such that for every $x\in X$ and every $T>0$, we have $A_{\epsilon}\leq e^{Th(\psi_1)}\nu(B(x,\epsilon,T))\leq B_{\epsilon}$ where $B(x,\epsilon,T)$ is the $(T,\epsilon)$-dynamical ball defined in Equation \eqref{dyna} in Subsection \ref{p5}. 
\end{pro}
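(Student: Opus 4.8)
The plan is to prove the Bowen--Gibbs property of the measure of maximal entropy of an expansive flow with the specification property, adapting Bowen's classical argument to flows (this is Proposition~7.3.15 of \cite{fish19}). I write $h=h(\psi_1)$ for the topological entropy and, for $x,y\in X$ and $T>0$, abbreviate $d_T(x,y)=\sup_{s\in[0,T]}d(\psi_s x,\psi_s y)$, so that $B(x,\epsilon,T)=\{y:d_T(x,y)<\epsilon\}$. Given $\epsilon>0$ I would first reduce to the case where $\epsilon$ is smaller than, say, a quarter of an expansivity constant of $\psi_t$: the dynamical balls are monotone in $\epsilon$ and the constants $A_\epsilon,B_\epsilon$ may depend on $\epsilon$, so the statement for large $\epsilon$ follows from the one for small $\epsilon$, and this ensures that the enlarged balls $B(\cdot,k\epsilon,T)$ with $k\le 4$ appearing below stay at a scale where expansivity is available. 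Let $M=M(\epsilon)$ be a transition time for the specification property at scale $\epsilon$.

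The first step is a uniform count of separated sets: I would show there exist $0<a_\epsilon\le b_\epsilon$ such that, writing $N(T,\epsilon)$ for the maximal cardinality of a $(T,\epsilon)$-separated subset of $X$, one has $a_\epsilon e^{Th}\le N(T,\epsilon)\le b_\epsilon e^{Th}$ for all $T>0$. The upper bound comes from the submultiplicative estimate $N(T+S,\epsilon)\le N(T,\epsilon/2)N(S,\epsilon/2)$ together with $h=\lim_T\frac1T\log N(T,\epsilon/2)$; the lower bound comes from a supermultiplicative estimate in which specification glues a maximal $(T,3\epsilon)$-separated family and a maximal $(S,3\epsilon)$-separated family into a $(T+S+M,\epsilon)$-separated family, giving $N(T+S+M,\epsilon)\ge N(T,3\epsilon)N(S,3\epsilon)$ and hence the lower bound by a Fekete-type argument; running this with the auxiliary scales $\epsilon,2\epsilon,3\epsilon$ is what keeps gluing from destroying separation. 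The second step, again from specification, is the comparability of all dynamical balls of a fixed radius: given $x,x'\in X$ and $T>0$, gluing the orbit of $x'$ on $[0,T]$ to the orbit of $x$ yields a point whose orbit $\epsilon$-shadows $x'$ on $[0,T]$ and, after the transition time $M$, $\epsilon$-shadows $x$, and invariance of $\nu$ together with reparametrisation along the orbit gives $\nu(B(x',\epsilon,T))\le K_\epsilon\,\nu(B(x,3\epsilon,T))$ for a uniform $K_\epsilon$ absorbing the factor $e^{Mh}$.

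For the upper bound I would take a maximal $(T,2\epsilon)$-separated set $S_T$. For distinct $y,y'\in S_T$ the balls $B(y,\epsilon,T)$ and $B(y',\epsilon,T)$ are disjoint, so $\sum_{y\in S_T}\nu(B(y,\epsilon,T))\le 1$; combined with the comparability of balls this forces $\nu(B(y,\epsilon,T))\le K'_\epsilon/\#S_T\le K'_\epsilon(a_{2\epsilon}e^{Th})^{-1}$ for every $y\in S_T$, and for an arbitrary $x$ the maximality of $S_T$ gives $y\in S_T$ with $d_T(x,y)\le 2\epsilon$, so $B(x,\epsilon,T)\subseteq B(y,3\epsilon,T)$ and the same argument at scale $3\epsilon$ yields $B_\epsilon$. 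For the lower bound I would use Bowen's construction of $\nu$ as a weak$^*$ limit of empirical measures equidistributed on maximal $(S,2\epsilon)$-separated sets (the construction already invoked in Section~\ref{s4} and Section~7): for large $S$, attach via specification to the orbit of $x$ on $[0,T]$ the orbit of each point of a maximal $(S-T-M,3\epsilon)$-separated set, obtaining pairwise $(S,2\epsilon)$-separated points that all lie in $B(x,\epsilon,T)$, in number at least $N(S-T-M,3\epsilon)\ge a_{3\epsilon}e^{(S-T-M)h}$, which is a proportion at least $c_\epsilon e^{-Th}$ of the whole separated set; passing to the weak$^*$ limit (standard care being needed here because the empirical measures involve a time average along orbits, which one handles by thickening the ball in the flow direction) yields $A_\epsilon$.

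The hard part will be the bookkeeping of how the radius degrades under the repeated shadowings and gluings (from $\epsilon$ to $2\epsilon$ and $3\epsilon$) and how the transition time $M$ feeds into the counting: one must check that after all the enlargements the balls stay below the expansivity constant, so that expansivity still forces $B(\cdot,k\epsilon,T)$ to shrink as $T\to\infty$ and the comparisons are not vacuous, and that the multiplicative errors $e^{\pm Mh}$ are absorbed into the $\epsilon$-dependent constants $A_\epsilon,B_\epsilon$. Once this is handled, the argument is exactly that of Proposition~7.3.15 of \cite{fish19}, which carries over verbatim since $\psi_t$ is expansive with the specification property by Theorem~\ref{propd}.
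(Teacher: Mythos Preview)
The paper does not give its own proof of this proposition: it is stated as a restatement of Proposition~7.3.15 of \cite{fish19} and invoked as a black box. Your outline is precisely the standard Bowen argument that underlies that reference, so there is nothing to compare; your route is the expected one.

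One genuine slip: your reduction ``the statement for large $\epsilon$ follows from the one for small $\epsilon$'' is only half true. Monotonicity of dynamical balls in $\epsilon$ gives the lower bound $A_\epsilon$ for large $\epsilon$ from the small-$\epsilon$ case, but not the upper bound: indeed, if $\epsilon$ exceeds the diameter of $X$ then $B(x,\epsilon,T)=X$ for all $T$, so $e^{Th}\nu(B(x,\epsilon,T))=e^{Th}\to\infty$ whenever $h>0$, and no finite $B_\epsilon$ exists. The Gibbs upper bound is only valid for $\epsilon$ below (a fraction of) the expansivity constant, which is how \cite{fish19} states it; the paper's phrasing ``for every $\epsilon>0$'' is imprecise on this point. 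This does not affect the application in the paper, which uses only the lower bound at $T=0$, but you should not claim the full two-sided estimate at all scales.
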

\begin{pro}
Let $M$ be a compact surface without conjugate points of genus greater than one and $\mu$ be its unique measure of maximal entropy. If $\mathcal{R}_0$ is dense in $T_1M$ then $\mu$ has full support.
\end{pro}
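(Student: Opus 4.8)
The plan is to first show that the factor measure $\nu$ has full support on $X$, and then transport this to $\mu$ on $T_1M$ through the semi-conjugacy $\chi$, using the density of $\mathcal{R}_0$ to replace the (a priori non-saturated) open set of $T_1M$ we care about by a genuinely open subset of $X$ that pulls back into it.

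\emph{Full support of $\nu$.} Apply Proposition \ref{gibbs} to the expansive flow $\psi_t$ with the specification property and its unique measure of maximal entropy $\nu$. Fixing $\epsilon>0$ and, say, $T=1$, the lower Gibbs bound yields $\nu(B(x,\epsilon,1))\geq A_{\epsilon}e^{-h(\psi_1)}>0$ for every $x\in X$. Since the dynamical ball $B(x,\epsilon,1)$ from Equation \eqref{dyna} is contained in the metric ball of radius $\epsilon$ about $x$ (take $s=0$), every non-empty open ball of $X$ has positive $\nu$-measure, so $\operatorname{supp}(\nu)=X$.

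\emph{The saturation argument and transfer.} It suffices to prove that $\mu(U)>0$ for an arbitrary non-empty open set $U\subset T_1M$. Since $\mathcal{R}_0$ is dense, choose $\theta\in\mathcal{R}_0\cap U$, so that $\chi^{-1}(\chi(\theta))=\mathcal{I}(\theta)=\{\theta\}$ by identity \eqref{ayu}. The set $K:=\chi(T_1M\setminus U)$ is a continuous image of a compact set, hence compact and closed in $X$, so $V:=X\setminus K$ is open. One checks $\chi(\theta)\notin K$: otherwise some $\eta\in T_1M\setminus U$ would satisfy $\chi(\eta)=\chi(\theta)$, forcing $\eta\in\mathcal{I}(\theta)=\{\theta\}\subset U$, a contradiction; thus $V\neq\emptyset$. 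Likewise $\chi^{-1}(V)\subset U$, since $\chi(\eta)\notin K$ implies $\eta\notin T_1M\setminus U$. Using $\chi_*\mu=\nu$ from \eqref{par} together with the full support of $\nu$,
\[ \mu(U)\geq\mu(\chi^{-1}(V))=\chi_*\mu(V)=\nu(V)>0. \]
As $U$ was an arbitrary non-empty open set, $\mu$ has full support.

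\emph{Main obstacle.} The only delicate point is ensuring that $V$ is non-empty, i.e.\ that some full equivalence class $\mathcal{I}(\theta)$ sits inside $U$; since $\chi$ collapses the non-trivial classes, a small open set of $T_1M$ carries no information about openness in $X$ unless it meets the expansive set. This is precisely where the density hypothesis on $\mathcal{R}_0$ is indispensable; granting Proposition \ref{gibbs}, all remaining steps are formal.
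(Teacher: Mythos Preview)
Your proof is correct and follows essentially the same strategy as the paper: establish full support of $\nu$ via the Gibbs bound of Proposition~\ref{gibbs}, then use density of $\mathcal{R}_0$ together with $\chi_*\mu=\nu$ to push this back to $\mu$. The one noteworthy difference is in how the auxiliary open set of $X$ is produced: the paper invokes the explicit saturated basis neighborhoods of Section~\ref{s3} around the expansive point, whereas your choice $V=X\setminus\chi(T_1M\setminus U)$ is a cleaner purely point-set argument that sidesteps that machinery (and your use of $T=1$ rather than the paper's $T=0$ is in fact more faithful to the hypothesis $T>0$ in Proposition~\ref{gibbs}).
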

\begin{proof}
For $T=0$ and every $\epsilon>0$, apply Proposition \ref{gibbs} to the quotient flow $\psi_t:X\to X$ and its unique measure of maximal entropy $\nu$. So, we have $0<A_{\epsilon}\leq \nu(B(x,\epsilon,0))\leq B_{\epsilon}$ for every $\epsilon>0$ and every $x\in X$. Therefore $\nu$ has full support on $X$ since $B(x,\epsilon,0)$ is just an open ball of radius $\epsilon$ centered at $x$. Now, let $U$ be any open set of $T_1M$. By density of $\mathcal{R}_0$, there is an expansive point $\xi\in U$. Note that the family of open saturated neighborhoods around $\mathcal{I}(\xi)=\xi$ defined in Section \ref{s3} forms a basis of neighborhoods at $\xi$. Hence there exists an open saturated set $A=A(\xi,\epsilon', \delta',\tau')$ included in $U$. Since $\nu$ has full support and $\chi(A)$ is an open set of $X$, the conclusion follows from
\[  \mu(U)\geq \mu(A)=\mu(\chi^{-1}\chi(A))=\chi_*\mu(\chi(A))=\nu(\chi(A))>0.  \]
\end{proof}
Despite we assumed that $\mathcal{R}_0$ is dense, we believe that this actually holds in our setting. For the moment, this density for the more general case is being studied for future work. This property holds for example for compact higher genus surfaces without conjugate points and with continuous Green bundles which include the case of surfaces without focal points.

\section{Acknowledgments}
I would like to thank my advisor Rafael Ruggiero for useful discussions. I appreciate the financial support of CAPES and FAPERJ funding agencies during the work. This article was supported in part by INCTMat under the project INCTMat-Faperj (E26/200.866/2018).

\bibliographystyle{plain}
%\bibliography{ref}

\end{document}